\date{}
\newcommand{\below}{\preceq}
\newcommand{\sbelow}{\prec}
\newcommand{\vect}{\textbf}
\newcommand{\orl}{\vee}
\newcommand{\andl}{\wedge}
\newcommand{\notl}{\overline}
\newtheorem{theorem}{Theorem}
\newtheorem{lemma}{Lemma}
\newtheorem{claim}{Claim}
\newtheorem{conjecture}{Conjecture}
\theoremstyle{definition}
\newtheorem{definition}{Definition}
\begin{document}

%%%%%%%%%%%%%%%%%%%%%%%%%%%%%%%%%%%%%%%%%%%%
\title{Linear read-once and related Boolean functions\thanks{This paper extends and strengthens the results presented at the 28th International Conference on Algorithmic Learning Theory \cite{Lozin}.}}
%\title{On the number and structure of specifying and extremal points of a threshold Boolean function}
%%%%%%%%%%%%%%%%%%%%%%%%%%%%%%%%%%%%%%%%%%%%

\author{
Vadim Lozin\thanks{Mathematics Institute, University of Warwick, UK. Email:  V.Lozin@warwick.ac.uk}
\and 
Igor Razgon\thanks{Department of Computer Science and Information Systems, Birkbeck University of London, UK.  Email: Igor@dcs.bbk.ac.uk}
\and
Viktor Zamaraev\thanks{Department of Computer Science, Durham University, South Road, Durham, DH1 3LE,  UK. Email: viktor.zamaraev@gmail.com}
\and 
Elena Zamaraeva\thanks{Mathematics Institute, University of Warwick, UK. Email: E.Zamaraeva@warwick.ac.uk}
\and 
Nikolai Yu. Zolotykh\thanks{Institute of Information Technology, Mathematics and Mechanics, Lobachevsky State University of Nizhni Novgorod, Russia. Email: Nikolai.Zolotykh@itmm.unn.ru}
}

\maketitle

\begin{abstract}
It is known that a positive Boolean function $f$ depending on $n$ variables has at least $n+1$ extremal points, i.e. minimal ones and maximal zeros.
We show that $f$ has exactly $n+1$ extremal points if and only if it is linear read-once.
%\ZamAdd{As a special case, we consider canalyzing positive functions.}

The class of linear read-once functions is known to be the intersection of the classes of read-once and threshold functions. 
Generalizing this result we show that the class of linear read-once functions is the intersection of read-once and Chow functions. 
We also find  the set of minimal read-once functions which are not linear read-once and the set of minimal threshold functions which are not linear read-once.
In other words, we characterize the class of linear read-once functions by means of minimal forbidden subfunctions within the universe of read-once and  the universe of threshold functions.

Within the universe of threshold functions the importance of linear read-once functions is due to the fact that they attain the minimum value of the specification number, 
which is $n+1$ for functions depending on $n$ variables. 
In 1995 Anthony et al. conjectured that for all other threshold functions the specification number is strictly greater than $n+1$. 
We disprove this conjecture by exhibiting a threshold non-linear read-once function depending on $n$ variables whose specification number is $n+1$.
\end{abstract}

{\it Keywords:} threshold function; read-once function; linear read-once function; nested canalyzing function; canalyzing function; Chow function

\section{Introduction}
Linear read-once functions constitute a remarkable subclass of  several   important classes of Boolean functions and appear in the literature frequently under various names. 
In theoretical computer science, they are known as \textit{nested} \cite{AnthonyBrightwellShaweTaylor1995} or \textit{linear read-once} \cite{lists}.
The latter term is justified by the fact that a linear read-once function is a read-once function admitting a Boolean formula that can
be constructed inductively in a ``linear'' fashion. In \cite{lists}, it was also proved that linear read-once functions are equivalent to \textit{$1$-decision lists}.
Later and independently this class was introduced in \cite{Kauffman2003} under the name \textit{nested canalyzing functions},
as a subclass of canalyzing functions, which appear to be important in biological applications \cite{Kauffman1993}.
The significance of nested canalyzing functions in the applications motivated their further theoretical study \cite{NCF2007, NCF2013}.
In particular, \cite{NCF2007} establishes the equivalence between nested canalyzing and \textit{unate cascade functions},
which have been studied in the design of logic circuits and binary decision diagrams.
All mentioned terms refer to the same class of Boolean functions which we call linear read-once.

In \cite{lists}, it was shown that the class of linear read-once functions is the intersection of the classes of read-once and threshold functions. 
Generalizing this result we show that the class of linear read-once functions is the intersection of read-once and Chow functions. 
We also find  the set of minimal read-once functions which are not linear read-once and the set of minimal threshold functions which are not linear read-once.
In other words, we characterize the class of linear read-once functions by means of minimal forbidden subfunctions within the universe of read-once and  the universe of threshold functions.

Within the universe of threshold functions the importance of linear read-once functions is due to the fact that they attain the minimum value of the specification number, 
i.e. of the number of Boolean points that uniquely specify a function in this universe. 
To study the range of values of specification number of threshold functions one can be restricted to positive threshold functions
depending on all their variables,
in which case the functions can be completely specified by their sets of extremal points, i.e. maximal zeros and minimal ones. In other words, the specification number of a  
positive threshold function is upper bounded by the number of its extremal points. For a linear read-once function, these numbers coincide and equal $n+1$. 
In 1995 Anthony et al. \cite{AnthonyBrightwellShaweTaylor1995} conjectured that for all other threshold functions the specification number is strictly greater than $n+1$. 

Our result about the minimal threshold non-linear read-once functions seems to support this conjecture, since all these functions have specification number $2n$.
One more result proved in this paper, which can be viewed as a supporting argument for the conjecture, 
states that a positive function $f$ depending on $n$ variables has exactly $n+1$ extremal points if and only if it is linear read-once. 
Nevertheless, rather surprisingly, we show that the conjecture is not true by exhibiting a positive threshold non-linear read-once function depending on $n$ variables whose specification number is $n+1$. 

The organization of the paper is as follows. All preliminary information related to the topic of the paper, including definitions and notation, is presented in Section~\ref{sec:pre}.
Section~\ref{sec:ex} is devoted to the number of extremal points in positive functions. In Section~\ref{sec:Chow} we show that the class of linear read-once functions is 
the intersection of the classes of read-once and Chow functions, and identify the set of minimal read-once functions which are not linear read-once.
In Section~\ref{sec:threshold}, we characterize the class of linear read-once functions in terms of minimal threshold functions which are not linear read-once and give a counterexample to the conjecture of Anthony et al.

%%%%%%%%%%%%%%%%%%%%%%%%%%%%%%%%%%%%%%%%%%%%%%%%%
%%%%%%%%%%%%%%%%%%%%%%%%%%%%%%%%%%%%%%%%%%%%%%%%%
\section{Preliminaries}
\label{sec:pre}
%%%%%%%%%%%%%%%%%%%%%%%%%%%%%%%%%%%%%%%%%%%%%%%%%
%%%%%%%%%%%%%%%%%%%%%%%%%%%%%%%%%%%%%%%%%%%%%%%%%

Let $B = \{ 0, 1 \}$. For a \textit{Boolean $n$-dimensional hypercube} $B^n$
we define \textit{sub-hypercube} $B^n(x_{i_1} = \alpha_1, \ldots, x_{i_k} = \alpha_k)$
as the set of all points of $B^n$ for which coordinate $i_j$ is equal to $\alpha_j$
for every $j = 1, \ldots, k$.
For a point $\vect{x} \in B^n$ we denote by $\notl{\vect{x}}$ the point in $B^n$
with $(\notl{\vect{x}})_i = 1$ if and only if $(\vect{x})_i = 0$ for every $i \in [n]$.

For a Boolean function $f=f(x_1, \ldots, x_n)$ on $B^n$, $k \in [n]$, and $\alpha_k \in \{ 0, 1\}$ 
we  denote by $f_{|x_k=\alpha_k}$ the Boolean function on $B^{n-1}$ defined as
follows:
$$
	f_{|x_k=\alpha_k} (x_1, \ldots, x_{k-1}, x_{k+1}, \ldots, x_n) =
	f(x_1, \ldots, x_{k-1}, \alpha_k, x_{k+1}, \ldots, x_n).
$$
For $i_1, \ldots, i_k \in [n]$ and $\alpha_1, \ldots, \alpha_k \in \{ 0,1 \}$ we denote by
$f_{|x_1 = \alpha_1, \ldots, x_k = \alpha_k}$ the function 
$(f_{|x_1 = \alpha_1, \ldots, x_{k-1} = \alpha_{k-1}})_{|x_k = \alpha_k}$.
We say that $f_{|x_1 = \alpha_1, \ldots, x_k = \alpha_k}$ is the \textit{restriction} of 
$f$ to $x_1 = \alpha_1, \ldots, x_k = \alpha_k$.
We also say that a Boolean function $g$ is a \textit{restriction} (or \textit{subfunction}) of a Boolean function $f \in B^n$
if there exist $i_1, \ldots, i_k \in [n]$ and $\alpha_1, \ldots, \alpha_k \in \{ 0,1 \}$ such that
$g = f_{|x_1 = \alpha_1, \ldots, x_k = \alpha_k}$.

A variable $x_k$ is called \textit{irrelevant} for $f$ if $f_{|x_k=1} \equiv f_{|x_k=0}$, i.e.,
$f_{|x_k=1}(\vect{x}) = f_{|x_k=0}(\vect{x})$ for every $\vect{x} \in B^{n-1}$. Otherwise,
$x_k$ is called \textit{relevant} for $f$.
If $x_k$ is irrelevant for $f$ we also say that $f$ \textit{does not depend on} $x_k$.

%%%%%%%%%%%%%%%
\subsection{Positive functions and extremal points}
%%%%%%%%%%%%%%%

By $\below$ we denote a partial order over the set $B^n$,
induced by inclusion in the power set lattice of the $n$-set.
In other words,  
$\vect{x} \below \vect{y}$ if $(\vect{x})_i = 1$ implies $(\vect{y})_i=1$.
In this case we will say that $\vect{x}$ is \textit{below} $\vect{y}$.
%, and $\vect{y}$ is \textit{above} $\vect{x}$.
When $\vect{x} \below \vect{y}$ and $\vect{x} \neq \vect{y}$ we will sometimes
write $\vect{x} \sbelow \vect{y}$.

\begin{definition}
A Boolean function $f$ is called \textit{positive} (also known as  \textit{positive monotone} or \textit{increasing}) 
if $f(\vect{x}) = 1$ 
and $\vect{x} \below \vect{y}$ imply $f(\vect{y})=1$.
\end{definition}

% For a Boolean function $f$ we denote by $\text{neg}(f)$ the set of points where $f$ takes 0-value, 
% and by $\text{pos}(f)$ the set of points where $f$ takes 1-value.
For a positive Boolean function $f$, the set of its false points forms a down-set and 
the set of its true points forms an up-set of the partially ordered set $(B^n, \below)$.
%with respect to $\below$.
We denote by 
\begin{itemize}
\item[$Z^f$] the set of maximal false points,
\item[$U^f$] the set of minimal true points. 
\end{itemize}
We will refer to a point in $Z^f$
as a \textit{maximal zero of $f$} and to a point in $U^f$ as a \textit{minimal one of $f$}.
A point will be called an \textit{extremal point of $f$} if it is either a maximal zero or a minimal one of $f$.
We denote by 
\begin{itemize}
\item[$r(f)$] the number of extremal points of $f$.
\end{itemize}
%A Boolean function $f = f(x_1, \ldots, x_n)$ \textit{ on coordinate} $i$
%if there exist points $\vect{x},\vect{y}$ that differ only in their $i$-th coordinates, such that 
%$f(\vect{x}) \neq f(\vect{y})$.
%In this case variable $x_i$ is called \textit{essential variable of $f$}.

%%%%%%%%%%%%%%%
\subsection{Threshold functions}
%%%%%%%%%%%%%%%
% One of the main objects of the present paper is so-called threshold Boolean functions, i.e., 
% Boolean functions whose 0-points and 1-points can be separated by a hyperplane.
% Formally, 
\begin{definition}
A Boolean function $f$ on $B^n$ is called a \textit{threshold function} if there exist
$n$ \textit{weights} $w_1, \ldots, w_n \in \mathbb{R}$ and a \textit{threshold} $t \in \mathbb{R}$
such that, for all $(x_1, \ldots, x_n) \in B^n$,
$$
	%f(x_1, \ldots, x_n) = 0 \text{ if and only if } \sum\limits_{i=1}^n w_i x_i \leq t.
	f(x_1, \ldots, x_n) = 0 \iff \sum\limits_{i=1}^n w_i x_i \leq t.
$$
\end{definition}
The inequality $w_1 x_1 + \ldots + w_n x_n \leq t$ is called \textit{threshold inequality} representing
function $f$. 
The hyperplane $w_1 x_1 + \ldots + w_n x_n = t$ is called \textit{separating hyperplane} 
for the function $f$.
It is not hard to see that there are uncountably many different threshold inequalities 
(and separating hyperplanes)
representing a given 
threshold function, and if there exists an inequality with non-negative weights, then $f$ is a positive function.

%The tuple $(w_1, \ldots, w_n, t)$ is called a \textit{separating structure} of $f$. 
%It is not hard to see that there are uncountably many separating structures representing a given 
%threshold function, and if there exists a separating structure $(w_1, \ldots, w_n, t)$ of $f$ such that
%$w_i \geq 0$ for every $i \in [n]$, then $f$ is positive function.

Let $k \in \mathbb{N}, k \geq 2$. A Boolean function $f$ on $B^n$ is \textit{$k$-summable}
if, for some $r \in \{ 2, \ldots, k \}$, there exist $r$ (not necessarily distinct)
false points $\vect{x}_1, \ldots, \vect{x}_r$ and $r$ (not necessarily distinct) 
true points $\vect{y}_1, \ldots, \vect{y}_r$ such that 
$\sum_{i=1}^r \vect{x}_i = \sum_{i=1}^r \vect{y}_i$ (where the summation is over $\mathbb{R}^n$).
A function is \textit{asummable} if it is not $k$-summable for all $k \geq 2$.

\begin{theorem} \label{th:asum} {\rm {\cite{E61}}}   
	A Boolean function is a threshold function if and only if it is asummable.
\end{theorem}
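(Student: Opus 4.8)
This is the classical characterization of threshold functions due to Elgot, and I would prove the two implications separately. The \emph{necessity} (threshold $\Rightarrow$ asummable) is the routine direction. Suppose $f$ is represented by weights $w_1, \dots, w_n$ and threshold $t$, and write $\langle w, \vect{z}\rangle = \sum_i w_i (\vect{z})_i$. If $f$ were $k$-summable, there would exist false points $\vect{x}_1, \dots, \vect{x}_r$ and true points $\vect{y}_1, \dots, \vect{y}_r$ with $\sum_{i} \vect{x}_i = \sum_i \vect{y}_i$. Applying the linear functional $\langle w, \cdot\rangle$ to both sides and using $\langle w, \vect{x}_i\rangle \le t$ (false points) and $\langle w, \vect{y}_i\rangle > t$ (true points) would force $rt \ge \sum_i \langle w, \vect{x}_i\rangle = \sum_i \langle w, \vect{y}_i\rangle > rt$, a contradiction. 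Hence no such relation exists and $f$ is asummable.

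For \emph{sufficiency} (asummable $\Rightarrow$ threshold) the plan is to pass to a geometric statement. Let $F$ and $T$ denote the sets of false and true points of $f$, and let $\mathrm{conv}(F)$, $\mathrm{conv}(T)$ be their convex hulls in $\mathbb{R}^n$. I claim that $f$ is $k$-summable for some $k \ge 2$ if and only if $\mathrm{conv}(F) \cap \mathrm{conv}(T) \neq \varnothing$. One direction is immediate: a relation $\sum_{i=1}^r \vect{x}_i = \sum_{i=1}^r \vect{y}_i$ gives, after dividing by $r$, a common point $\frac{1}{r}\sum_i \vect{x}_i = \frac{1}{r}\sum_i\vect{y}_i$ of the two hulls. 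Conversely, suppose the hulls meet. Since every point of $B^n$ is integral, $\mathrm{conv}(F)$ and $\mathrm{conv}(T)$ are rational polytopes, so their nonempty intersection is a rational polytope and contains a rational point $\vect{z}$. Writing $\vect{z}$ as a convex combination of points of $F$ and, separately, of points of $T$ with rational coefficients and clearing a common denominator $N$, we obtain multisets of $N$ false points and $N$ true points with equal sums $N\vect{z}$; since no point is simultaneously true and false, necessarily $N \ge 2$, so $f$ is $N$-summable.

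Thus asummability is equivalent to $\mathrm{conv}(F) \cap \mathrm{conv}(T) = \varnothing$, and these are disjoint compact convex sets. The separating hyperplane theorem then yields $w \in \mathbb{R}^n$ and reals $a < b$ with $\langle w, \vect{x}\rangle \le a$ for all $\vect{x} \in F$ and $\langle w, \vect{y}\rangle \ge b$ for all $\vect{y} \in T$; taking $t = a$ (any value in $[a,b)$ works) gives exactly $\langle w, \vect{x}\rangle \le t$ on false points and $\langle w, \vect{y}\rangle > t$ on true points, so $f$ is a threshold function with weights $w$ and threshold $t$. The degenerate cases where $f$ is constant, so one of $F, T$ is empty, are handled directly: such $f$ is trivially asummable and is represented by $w = \vect{0}$ with a suitable $t$. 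I expect the main obstacle to be the rationality step inside the equivalence above, namely justifying that an intersection point of the two hulls may be taken rational and then cleared to an integral summation relation with $r \ge 2$; the separation step itself is a black-box application of a standard convexity theorem.
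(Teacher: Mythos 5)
The paper does not prove this theorem at all: it is quoted directly from Elgot \cite{E61}, so there is no internal proof to compare yours against, and your attempt has to be judged on its own. Judged that way, it is correct, and it is the classical convex-separation proof of the asummability criterion. The necessity direction (threshold $\Rightarrow$ asummable) is exactly the routine weight-averaging contradiction and is fine. For sufficiency, your reduction of ``$k$-summable for some $k\ge 2$'' to $\mathrm{conv}(F)\cap\mathrm{conv}(T)\neq\varnothing$ is the right bridge, and the separation step is a legitimate black-box use of the separating hyperplane theorem for disjoint compact convex sets, with the constant functions correctly split off (the theorem needs both sets nonempty). The one place you flag as an obstacle is indeed the only step that deserves an explicit sentence: after locating a rational point $\vect{z}$ in the intersection (which is fine, since the intersection of two rational polytopes is a rational polytope and its vertices are rational), you assert that $\vect{z}$ can be written as a convex combination of points of $F$, and of points of $T$, with \emph{rational} coefficients. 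This is true but not automatic: justify it via Carath\'eodory's theorem --- choose affinely independent points of $F$ whose simplex contains $\vect{z}$; the barycentric coordinates of $\vect{z}$ with respect to them are the unique solution of a linear system with rational data, hence rational --- or by noting that the set of coefficient vectors representing $\vect{z}$ is itself a nonempty rational polytope and so contains a rational point. With that addition, clearing denominators and your observation that the common denominator $N$ cannot be $1$ (else $F\cap T\neq\varnothing$) give $N$-summability, matching the paper's definition (which allows repeated points), and the whole argument is sound.
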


It is known (see e.g. Theorem 9.3 in \cite{CH11}) that the class of threshold functions is closed under taking restrictions, i.e. any restriction of a threshold function is again a threshold function.
%%%%%%%%%%%%%%%%%%%%%%%%%%%%%%%%%%%%%%%%%%%%%%%%%
%%%%%%%%%%%%%%%%%%%%%%%%%%%%%%%%%%%%%%%%%%%%%%%%%
\subsection{Chow functions}
%%%%%%%%%%%%%%%%%%%%%%%%%%%%%%%%%%%%%%%%%%%%%%%%%
%%%%%%%%%%%%%%%%%%%%%%%%%%%%%%%%%%%%%%%%%%%%%%%%%

An important class of Boolean functions was introduced in 1961 by Chow \cite{Chow1961} and is known nowadays as {\it Chow functions}.
This notion can be defined as follows. 

\begin{definition}
The Chow parameters of a Boolean function $f(x_1, \ldots, x_n)$ are the $n+1$
integers $(w_1(f),w_2(f), \ldots ,w_n(f),w(f))$, where $w(f)$ is the number of true points of $f$ and $w_i(f)$ is the number of true points of $f$ where $x_i$ is also true.
A Boolean function $f$ is a Chow function if no other function has the same Chow parameters as $f$.
\end{definition}

The importance of the class of Chow functions is due to the fact that it contains all threshold functions, which was also shown by Chow in \cite{Chow1961}.

%%%%%%%%%%%%%%%
\subsection{Read-once, linear read-once and canalyzing functions}
%%%%%%%%%%%%%%%

\begin{definition}
A Boolean function $f$ is called \textit{read-once} if it can be represented by a Boolean formula using the operations of conjunction, disjunction, and negation in which every variable appears at most once.
We say that such a formula is a \textit{read-once formula} for $f$.
\end{definition}

\begin{definition}
A read-once function $f$ is \textit{linear read-once} (\textit{lro})
if it is either a constant function, or it can be represented by a \textit{nested formula}
defined recursively as follows:
\begin{enumerate}
	\item both literals $x$ and $\notl{x}$ are nested formulas;
	\item $x \orl t$, $x \andl t$, $\notl{x} \orl t$, $\notl{x} \andl t$ are nested formulas,
	where $x$ is a variable and $t$ is a nested formula that contains neither $x$, nor $\notl{x}$.
\end{enumerate}
\end{definition}

%If a read-once function $f$ is not linear read-once we will say that $f$ is \textit{non-linear read-once} (\textit{non-lro}).
%In \cite{AnthonyBrightwellShaweTaylor1995}, lro functions depending on all variables have been called {\it nested}.
It is not difficult to see that an lro function $f$ is positive if and only if 
a nested formula representing $f$ does not contain negations.  

In \cite{lists}, it has been shown that the class of lro functions is precisely the intersection 
of threshold and read-once functions. 

\begin{definition}
A Boolean function $f = f(x_1, \ldots, x_n)$ is called \textit{canalyzing}\footnote{The notion of canalyzing functions was introduced in \cite{Kauffman1993} and 
is widely used in biological applications of Boolean networks. The lro functions form a subclass of canalyzing functions and are known in this context as nested canalyzing.}
if there exists $i \in [n]$ such that $f_{|x_i=0}$ or $f_{|x_i=1}$ is a constant function.
\end{definition}

It is easy to see that if $f$ is a positive canalyzing function then $f_{|x_i=0} \equiv \vect{\textup{0}}$ or $f_{|x_i=1} \equiv \vect{\textup{1}}$, for some $i \in [n]$.

%%%%%%%%%%%%%%%
\subsection{Specifying sets and specification number}
%%%%%%%%%%%%%%%

Let $\mathcal{F}$ be a class of Boolean functions of $n$ variables, and let $f \in \mathcal{F}$. 

\begin{definition}
A set of points $S \subseteq B^n$ is a \textit{specifying set for} $f$ in $\mathcal{F}$
if the only function in $\mathcal{F}$ consistent with $f$ on $S$ is $f$ itself. 
In this case we also say that $S$ \textit{specifies} $f$ in the class  $\mathcal{F}$.
The minimal cardinality of a specifying set for $f$ in $\mathcal{F}$ is called the 
\textit{specification number of $f$} (in $\mathcal{F}$) and denoted $\sigma_{\mathcal{F}}(f)$.
\end{definition}
 
Let $\mathcal{H}_n$ be the class of threshold Boolean functions of $n$ variables. 
It was shown in \cite{Hu1965} and later in \cite{AnthonyBrightwellShaweTaylor1995} that the specification number of a threshold function of $n$ variables is at least $n+1$.  

\begin{theorem}\label{th:th-spec}{\rm \cite{Hu1965,AnthonyBrightwellShaweTaylor1995}}
	For any threshold Boolean function $f$ of $n$ variables $\sigma_{\mathcal{H}_n}(f) \geq n+1$. 
\end{theorem}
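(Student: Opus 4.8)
The plan is to prove the contrapositive: I will show that any $S \subseteq B^n$ with $|S| \le n$ fails to specify $f$ in $\mathcal{H}_n$, by producing a second threshold function that agrees with $f$ on all of $S$. The whole argument lives in the $(n+1)$-dimensional parameter space of separating hyperplanes, and combines a dimension count on the points of $S$ with the fact that the whole cube lifts to a spanning set.

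First I would set up a lifting. Fix a threshold representation $(w_1, \ldots, w_n, t)$ of $f$ and write $\hat w = (w_1, \ldots, w_n, t) \in \mathbb{R}^{n+1}$; to each $\vect{x} \in B^n$ associate the augmented vector $\hat{\vect{x}} = (x_1, \ldots, x_n, -1)$. Then $\langle \hat w, \hat{\vect{x}}\rangle = \sum_i w_i x_i - t$, so that $f(\vect{x}) = 1 \iff \langle \hat w, \hat{\vect{x}}\rangle > 0$. I would normalize the representation so that no point of $B^n$ lies on the separating hyperplane (achievable by an arbitrarily small change of $t$), so that every label of $f$ is recorded by a strict inequality between $\langle \hat w, \hat{\vect{x}}\rangle$ and $0$.

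The core is a dimension argument paired with a spanning observation. On one hand, if $|S| \le n$ then the vectors $\{\hat{\vect{x}} : \vect{x} \in S\}$ span a subspace of dimension at most $n$, so there is a nonzero $\hat u \in \mathbb{R}^{n+1}$ orthogonal to all of them; along the line $\hat w + s\hat u$ the quantity $\langle \hat w + s\hat u, \hat{\vect{x}}\rangle = \langle \hat w, \hat{\vect{x}}\rangle$ is constant for every $\vect{x} \in S$, so each parameter vector on this line induces a threshold function that agrees with $f$ on $S$. On the other hand, the augmented points $\hat{\vect{0}}, \hat{\vect{e}}_1, \ldots, \hat{\vect{e}}_n$ (the all-zeros point and the unit vectors) form a basis of $\mathbb{R}^{n+1}$, so the augmented cube spans the whole space; hence $\hat u$ is not orthogonal to every $\hat{\vect{y}}$, and there is a $\vect{y}$ with $\langle \hat u, \hat{\vect{y}}\rangle \ne 0$, necessarily outside $S$. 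I would then push the parameter along the line until the label at such a $\vect{y}$ flips: since $\langle \hat w + s\hat u, \hat{\vect{y}}\rangle$ is a non-constant affine function of $s$, it changes sign at some $s_0$, and for a generic $s$ past $s_0$ (avoiding the finitely many values of $s$ that place a cube point exactly on the moving hyperplane) the parameter $\hat w + s\hat u$ realizes a threshold function $g$ with $g(\vect{y}) \ne f(\vect{y})$ while $g|_S = f|_S$. Thus $g \ne f$ is consistent with $f$ on $S$, so $S$ does not specify $f$; equivalently every specifying set has at least $n+1$ points.

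I expect the main obstacle to be this last step: guaranteeing that the perturbation produces a genuinely different threshold function rather than merely another representation of $f$. This is exactly where the two ingredients must be combined with care — the null direction $\hat u$ coming from $|S| \le n$ keeps us consistent on $S$, the spanning property of the full lifted cube forces $\hat u$ to act nontrivially somewhere on $B^n$, and the genericity normalization of the representation rules out the boundary degeneracies (points sitting exactly on a hyperplane) that could otherwise make the sign change illusory.
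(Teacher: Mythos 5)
Your proof is correct, but there is nothing in the paper to compare it against: Theorem~\ref{th:th-spec} is stated as an imported result, credited to Hu (1965) and to Anthony, Brightwell and Shawe-Taylor (1995), and the paper itself contains no proof of it. Your argument---lifting each point $\mathbf{x}\in B^n$ to $\hat{\mathbf{x}}=(x_1,\dots,x_n,-1)\in\mathbb{R}^{n+1}$, observing that the lifted points of a set $S$ with $|S|\le n$ cannot span $\mathbb{R}^{n+1}$ whereas the lifted cube does (via $\hat{\mathbf{0}},\hat{\mathbf{e}}_1,\dots,\hat{\mathbf{e}}_n$), and then sliding the weight--threshold vector along a direction $\hat{u}$ orthogonal to the lifted points of $S$ until the label of a witness $\mathbf{y}$ with $\langle\hat{u},\hat{\mathbf{y}}\rangle\neq 0$ flips---is the standard proof of this lower bound, essentially the one found in the cited sources. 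Two small remarks. First, under the paper's convention $f(\mathbf{x})=0\iff\sum_i w_ix_i\le t$, the normalization ``no cube point lies on the separating hyperplane'' must be achieved by a sufficiently small \emph{increase} of $t$: a point with $\sum_i w_ix_i=t$ is a zero, and decreasing $t$ would change its label; your phrase ``arbitrarily small change of $t$'' is true but should be read in this one-sided way. Second, the genericity you worry about at the end is unnecessary: you only need the perturbed parameter vector to agree with $f$ on $S$ (automatic, since the values there are unchanged and nonzero) and to disagree with $f$ at the single point $\mathbf{y}$; whatever happens at the other points of $B^n$ is irrelevant, because every weight--threshold pair defines a member of $\mathcal{H}_n$, and disagreement at $\mathbf{y}$ alone already certifies that $S$ is not a specifying set, hence that $\sigma_{\mathcal{H}_n}(f)\ge n+1$. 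For contrast, a proof using the machinery the paper does develop would go through Theorem~\ref{cl:ess-sigma}, reducing the claim to showing that every threshold function of $n$ variables has at least $n+1$ essential points; that route is genuinely different and not obviously shorter than yours.
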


Also,  in \cite{AnthonyBrightwellShaweTaylor1995} it was shown that  the lower bound is attained for lro functions.

\begin{theorem}\label{th:nested-spec}{\rm \cite{AnthonyBrightwellShaweTaylor1995}}
	For any lro function $f$ depending on all its $n$ variables, $\sigma_{\mathcal{H}_n}(f) = n+1$.
\end{theorem}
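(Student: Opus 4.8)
The lower bound $\sigma_{\mathcal{H}_n}(f)\ge n+1$ is already supplied by Theorem~\ref{th:th-spec}, since every lro function is threshold. Hence the whole task is to exhibit a specifying set of size $n+1$, i.e. to prove $\sigma_{\mathcal{H}_n}(f)\le n+1$, and the plan is to do this by induction on $n$, exploiting the recursive structure of a nested formula. First I would reduce to the positive case: negating a variable is a coordinate flip of $B^n$ that maps $\mathcal{H}_n$ bijectively onto itself and carries specifying sets to specifying sets, so $\sigma_{\mathcal{H}_n}$ is invariant under it; applying such flips to every negated literal turns the nested formula of $f$ into a negation-free one, i.e. a positive lro function still depending on all $n$ variables. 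The base case $n=1$ (so $f=x_1$) is immediate, as $\{(0),(1)\}$ already rules out the three other threshold functions on $B^1$.

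For the inductive step I would write the positive nested formula as $f=x_1\circ g$ (after renaming variables), where $\circ\in\{\wedge,\vee\}$, $g$ is a positive lro function on $x_2,\dots,x_n$ not involving $x_1$, and $g$ depends on all of its $n-1$ variables (otherwise $f$ would fail to depend on the missing one). By the induction hypothesis $g$ has a specifying set $S'\subseteq B^{n-1}$ in $\mathcal{H}_{n-1}$ with $|S'|=n$. In the case $\circ=\vee$ I would take
$$ S=\{(0,\vect{s}):\vect{s}\in S'\}\cup\{(1,\vect{0})\}, $$
which has size $n+1$ (the extra point lies in the opposite half-cube). The case $\circ=\wedge$ is symmetric, using $\{(1,\vect{s}):\vect{s}\in S'\}\cup\{(0,\vect{1})\}$ instead, so I would carry out $\vee$ in detail and remark that $\wedge$ follows identically (equivalently, by de Morgan duality, which preserves both lro-ness and the specification number).

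To verify that $S$ specifies $f$, take any $h\in\mathcal{H}_n$ agreeing with $f$ on $S$, with threshold representation $\sum_i w_i x_i>t$ for its true points. Restricting to $x_1=0$ yields a threshold function $h_{|x_1=0}$ with weights $w_2,\dots,w_n$ and threshold $t$ that agrees with $f_{|x_1=0}=g$ on $S'$; since $S'$ specifies $g$, we get $h_{|x_1=0}=g$, pinning down one half of the cube. The crux is to propagate this to the half $x_1=1$, where $f\equiv1$. Here I would use the point $(1,\vect{0})$, which forces $w_1>t$, together with the key observation that a positive threshold function depending on a variable $y_i$ must carry strictly positive weight on $y_i$ in every representation (choose an assignment witnessing the dependence and subtract the two threshold inequalities). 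Applied to $h_{|x_1=0}=g$, this gives $w_2,\dots,w_n>0$, whence for every $\vect{y}$ we have $w_1+\sum_{i\ge2}w_i y_i\ge w_1>t$, so $h(1,\vect{y})=1=f(1,\vect{y})$ and therefore $h=f$.

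The main obstacle is exactly this propagation step. Because $h$ is an arbitrary threshold function, a priori allowed to be non-positive and to carry negative weights, one cannot simply invoke monotonicity to extend agreement from the half-cube $x_1=0$ to the rest; the positive-weight lemma is what forbids $h$ from dipping back to $0$ on the $x_1=1$ half, and recognising that the single extra point $(1,\vect{0})$ is precisely what this lemma must cooperate with is the heart of the argument. The remaining ingredients — the base case, the symmetric $\wedge$ case, and invariance under variable flips — are routine.
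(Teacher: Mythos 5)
Your proof is correct, but it takes a genuinely different route from the one the paper relies on. The paper does not prove Theorem~\ref{th:nested-spec} itself; it cites Anthony, Brightwell and Shawe-Taylor and summarizes their argument just before Conjecture~\ref{con:conjecture}: after the same reduction to positive functions, they show (i) for \emph{any} positive threshold function depending on all its variables, the set of extremal points (maximal zeros and minimal ones) is a specifying set in $\mathcal{H}_n$, and (ii) a positive lro function with $n$ relevant variables has exactly $n+1$ extremal points; the lower bound comes from Theorem~\ref{th:th-spec} in both treatments. You instead induct on the nested formula $f = x_1 \circ g$, lift a specifying set for $g$, add a single point from the constant half-cube, and propagate agreement via the positive-weight observation (every threshold representation of a positive function depending on a variable puts strictly positive weight on that variable). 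That lemma and its use are sound: once the induction forces $h_{|x_1=0}=g$, the conclusions $w_2,\dots,w_n>0$ and $w_1>t$ do force $h\equiv 1$ on the half $x_1=1$, and the $\wedge$ case follows by the duality you describe. The two arguments are closer than they look: for $f=x_1\vee g$ the extremal points of $f$ are exactly the lifted extremal points of $g$ together with $(1,\vect{0})$, so if $S'$ is chosen to be the extremal points of $g$, your recursion reconstructs precisely the extremal points of $f$, and your verification step is a hands-on, lro-specific proof of step (i). What the paper's route buys is the more general specification-by-extremal-points fact, which is reused elsewhere (in Lemma~\ref{lem:g} and Theorem~\ref{prop:1}) and ties into the extremal-point counting theme that Section~\ref{sec:ex} generalizes; what your route buys is a self-contained, elementary induction that yields the size bound and the specifying property simultaneously without ever needing the general fact.
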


% To prove Theorem \ref{th:nested-spec}, \ZolAdd{\citet{AnthonyBrightwellShaweTaylor1995}} first showed that for a positive
% threshold function $f$, which depends on all its variables, the set $Z^f \cup U^f$ of extremal points
% specifies $f$. Then they proved that for any positive nested function $f$ of $n$ variables
% $|Z^f \cup U^f| = n+1$.

%%%%%%%%%%%%%%%
\subsection{Essential points}
%%%%%%%%%%%%%%%

In estimating the specification number %, $\sigma_{\mathcal{H}_n} (f)$,
of a threshold Boolean function $f \in \mathcal{H}_n$ 
it is often useful to consider essential points of $f$ defined as follows.

\begin{definition} 
A point $\vect{x}$ is \textit{essential} for $f$ (with respect to class $\mathcal{H}_n$), if
there exists a function $g \in \mathcal{H}_n$ such that $g(\vect{x}) \neq f(\vect{x})$ and
$g(\vect{y}) = f(\vect{y})$ for every $\vect{y} \in B^n$, $\vect{y} \neq \vect{x}$.
\end{definition}
 
Clearly, any specifying set for $f$ must contain all essential points for $f$. It turns out that 
the essential points alone are sufficient to specify $f$ in $\mathcal{H}_n$ \cite{C65}.
Therefore, we have the following well-known result.
\begin{theorem}\label{cl:ess-sigma}{\rm \cite{C65}}
	The specification number $\sigma_{\mathcal{H}_n}(f)$ of a function $f \in \mathcal{H}_n$
	is equal to the number of essential points of $f$.
	%The number of essential points of $f \in \mathcal{H}_n$ is equal to $\sigma_{\mathcal{H}_n}(f)$.
	%and the unique specifying set for $f$ of size $\sigma_{\mathcal{H}_n}(f)$ consists of
	%all its essential points.
\end{theorem}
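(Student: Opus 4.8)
The plan is to establish the two inequalities $\sigma_{\mathcal{H}_n}(f)\ge |E|$ and $\sigma_{\mathcal{H}_n}(f)\le |E|$, where $E$ denotes the set of essential points of $f$. The lower bound is the easy half and is exactly the remark preceding the statement: if $\vect{x}$ is essential then by definition there is a function $g\in\mathcal{H}_n$ agreeing with $f$ everywhere except at $\vect{x}$, so any set $S$ not containing $\vect{x}$ cannot separate $f$ from $g$ and hence is not specifying. Thus every specifying set contains all of $E$, which gives $\sigma_{\mathcal{H}_n}(f)\ge |E|$.

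For the reverse inequality it suffices to prove that $E$ is itself a specifying set, since then $\sigma_{\mathcal{H}_n}(f)\le |E|$. I would reduce this to the following key lemma: if $g\in\mathcal{H}_n$ and $g\ne f$, then $f$ and $g$ differ at some point that is essential for $f$. Granting the lemma, suppose $g\in\mathcal{H}_n$ agrees with $f$ on all of $E$; if $g\ne f$, the lemma produces an essential point $\vect{x}$ with $g(\vect{x})\ne f(\vect{x})$, contradicting that $\vect{x}\in E$ and that $g$ and $f$ agree on $E$. Hence $g=f$, so $E$ specifies $f$, and combining the two bounds yields $\sigma_{\mathcal{H}_n}(f)=|E|$.

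The heart of the matter is therefore the lemma, which I would prove by a continuous deformation of separating hyperplanes. Fix threshold representations $(\vect{w}^f,t^f)$ and $(\vect{w}^g,t^g)$ of $f$ and $g$. Because $B^n$ is finite, I may first shift the thresholds so that no point of $B^n$ lies on either separating hyperplane; then for every $\vect{x}$ the quantity $\langle\vect{w}^f,\vect{x}\rangle-t^f$ is nonzero, with sign positive exactly when $f(\vect{x})=1$, and similarly for $g$. For $\lambda\in[0,1]$ consider the affine interpolation $(\vect{w}_\lambda,t_\lambda)=(1-\lambda)(\vect{w}^f,t^f)+\lambda(\vect{w}^g,t^g)$ and the function $\varphi_{\vect{x}}(\lambda)=\langle\vect{w}_\lambda,\vect{x}\rangle-t_\lambda$, which is affine in $\lambda$. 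A point on which $f$ and $g$ agree has $\varphi_{\vect{x}}(0)$ and $\varphi_{\vect{x}}(1)$ of the same sign, and being a convex combination of these two values, $\varphi_{\vect{x}}(\lambda)$ keeps that sign on all of $[0,1]$; a point of the disagreement set $D$ has $\varphi_{\vect{x}}(0)$ and $\varphi_{\vect{x}}(1)$ of opposite signs, so $\varphi_{\vect{x}}$ vanishes at a unique $\lambda_{\vect{x}}\in(0,1)$. Set $\lambda^\ast=\min_{\vect{x}\in D}\lambda_{\vect{x}}$. For $\lambda$ just above $\lambda^\ast$ (but below the next crossing), no point lies on the hyperplane, so $(\vect{w}_\lambda,t_\lambda)$ defines a bona fide threshold function; it coincides with $f$ everywhere except on the minimizers of $\lambda_{\vect{x}}$, since those are the only sign changes that have occurred. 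If the minimum is attained by a single point $\vect{x}^\ast$, this exhibits a threshold function differing from $f$ precisely at $\vect{x}^\ast\in D$, proving that $\vect{x}^\ast$ is essential.

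The step I expect to be the main obstacle is the case where several points of $D$ attain the minimum $\lambda^\ast$ simultaneously, because then the deformation flips all of them at once and does not directly witness a single essential point. To handle this I would use that the set of threshold representations of $g$ is an open region (again because $B^n$ is finite and the defining inequalities can be taken strict), so an arbitrarily small generic perturbation of $(\vect{w}^g,t^g)$ still represents the same function $g$ while moving the values $\lambda_{\vect{x}}=\varphi_{\vect{x}}(0)/\bigl(\varphi_{\vect{x}}(0)-\varphi_{\vect{x}}(1)\bigr)$ into pairwise distinct positions; after such a perturbation the minimum is attained uniquely and the argument above applies verbatim. A careful verification that the perturbation preserves $g$, and hence the disagreement set $D$, while separating the crossing values, is the one genuinely technical point of the proof.
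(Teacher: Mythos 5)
Your proposal is correct, but it cannot be compared to a proof in the paper for a simple reason: the paper does not prove this theorem at all. It states the easy half as a remark (``any specifying set for $f$ must contain all essential points'') and attributes the hard half --- that the essential points alone specify $f$ in $\mathcal{H}_n$ --- to Cover \cite{C65}, presenting the theorem as a known result. Your argument supplies exactly the missing content, and it is sound: the reduction to the key lemma (any $g \in \mathcal{H}_n$ with $g \neq f$ disagrees with $f$ at a point essential for $f$) is the right skeleton, and the lemma's proof by affine interpolation of threshold representations works. The preliminary normalization is legitimate under the paper's convention (where points satisfying $\sum w_i x_i \leq t$ are zeros): since $B^n$ is finite one can raise $t$ slightly so that no point lies on either hyperplane, after which sign of $\varphi_{\vect{x}}$ determines the function value. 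Agreement points keep their sign along the whole segment because $\varphi_{\vect{x}}(\lambda)$ is a convex combination of two like-signed values; disagreement points cross exactly once; and just past the first crossing $\lambda^\ast$ the interpolated pair represents a threshold function differing from $f$ only on the minimizers. Your treatment of the tie case is also correct and is indeed the only delicate point: for distinct $\vect{x} \neq \vect{y}$ in the disagreement set, the condition $\lambda_{\vect{x}} = \lambda_{\vect{y}}$ reads $\varphi_{\vect{x}}(0)\varphi_{\vect{y}}(1) = \varphi_{\vect{y}}(0)\varphi_{\vect{x}}(1)$, which is a non-constant affine condition on the perturbation $(\delta\vect{w}, \delta t)$ of $g$'s representation (it could only be identically zero if $\varphi_{\vect{x}}(0)\vect{y} = \varphi_{\vect{y}}(0)\vect{x}$ and $\varphi_{\vect{x}}(0) = \varphi_{\vect{y}}(0)$, forcing $\vect{x} = \vect{y}$), so a small generic perturbation avoids the finitely many resulting hyperplanes while preserving $g$, whose representations form an open set. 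In short: the paper buys brevity by citation; your proof buys self-containedness, essentially reconstructing the classical perturbation argument behind Cover's theorem.
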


The following result is a restriction of Theorem~4 in~\cite{ZolotykhShevchenko1999} (proved for threshold functions of many-valued logic)
to the case of Boolean threshold functions.  

% (really, its generalization for the class of
% threshold functions of many-valued logic) was
% obtained in~\cite{ZolotykhShevchenko1999} (see Theorem~4 there).

\begin{theorem}\label{th:ess-all}{\rm \cite{ZolotykhShevchenko1999}}
A zero of a threshold function $f$ is essential if and only if
there is separating hyperplane containing it.
%	Similarly the set of all essential ones of $f \in \mathcal{H}_n$ is the union of all points in $B^n$
%	belonging to any hyperplane	$w_1 x_1 + \ldots + w_n x_n = t$
%	such that $w_1 x_1 + \ldots + w_n x_n \leq t$ is any threshold inequality representing $\overline{f}$.
\end{theorem}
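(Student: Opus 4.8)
My plan is to reformulate essentiality as a linear separability condition and then match it to the hyperplane-through-$\vect{x}$ condition. Write $T_1=\{\vect{y}\in B^n: f(\vect{y})=1\}$ and $T_0=\{\vect{y}\in B^n: f(\vect{y})=0\}$ for the true and false points of $f$, and recall $\vect{x}\in T_0$. Since $\vect{x}$ is a zero, the unique function that differs from $f$ exactly at $\vect{x}$ is the function $g$ with $g(\vect{x})=1$ and $g\equiv f$ elsewhere, so $\vect{x}$ is essential precisely when $g$ is a threshold function. By the definition of a threshold function this holds if and only if $T_1\cup\{\vect{x}\}$ and $T_0\setminus\{\vect{x}\}$ are linearly separable. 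I would therefore prove that this separability is equivalent to the existence of a separating hyperplane of $f$ through $\vect{x}$, treating the two implications separately.

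For the easier implication, assume a threshold inequality $\vect{w}\cdot\vect{v}\le t$ represents $f$ with $\vect{w}\cdot\vect{x}=t$; then $\vect{w}\cdot\vect{y}>t$ for every $\vect{y}\in T_1$ and $\vect{w}\cdot\vect{z}\le t$ for every $\vect{z}\in T_0$. I would evaluate the linear functional $\vect{v}\mapsto\vect{w}\cdot\vect{v}$ on a hypothetical common point of $\mathrm{conv}(T_1\cup\{\vect{x}\})$ and $\mathrm{conv}(T_0\setminus\{\vect{x}\})$: from the first hull its value is at least $t$ and from the second it is at most $t$, and equality forces all the weight on the first side to sit on $\vect{x}$, i.e. the common point equals $\vect{x}$ and lies in $\mathrm{conv}(T_0\setminus\{\vect{x}\})$. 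This last possibility is excluded by the key fact that every point of $B^n$ is a vertex of the cube $[0,1]^n$, whence $\vect{x}\notin\mathrm{conv}(B^n\setminus\{\vect{x}\})$. Thus the two hulls are disjoint compact convex sets, the standard separating hyperplane theorem yields a threshold representation of $g$, and $\vect{x}$ is essential.

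For the converse, assume $\vect{x}$ is essential and fix a threshold representation $(\vect{w}_0,t_0)$ of $f$ (true points strictly above, false points on or below) together with a representation $(\vect{w}',c')$ of $g$ (under which $\vect{x}$ is strictly above the hyperplane). I would then travel along the segment $\vect{w}_\theta=\theta\vect{w}'+(1-\theta)\vect{w}_0$, $t_\theta=\theta c'+(1-\theta)t_0$, $\theta\in[0,1]$, and track the affine form $h_\theta(\vect{v})=\vect{w}_\theta\cdot\vect{v}-t_\theta$. For every true point $h_\theta$ remains positive and for every false point other than $\vect{x}$ it remains non-positive throughout, since each value is a combination of two quantities of the correct sign. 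Meanwhile $h_\theta(\vect{x})$ is non-positive at $\theta=0$ and strictly positive at $\theta=1$, so by the intermediate value theorem some $\theta^{*}$ gives $h_{\theta^{*}}(\vect{x})=0$; the pair $(\vect{w}_{\theta^{*}},t_{\theta^{*}})$ is then a representation of $f$ whose hyperplane passes through $\vect{x}$.

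I expect the converse implication to be the crux. The naive idea of taking a representation of $g$ and merely raising its threshold until $\vect{x}$ becomes tight can fail, because raising the threshold may demote a genuine true point below it and destroy the representation of $f$; the homotopy above is designed exactly to keep all other points on their correct sides while sliding $\vect{x}$ onto the hyperplane, and continuity does the rest. In the forward implication the only subtle step is ruling out $\vect{x}\in\mathrm{conv}(T_0\setminus\{\vect{x}\})$, which is precisely where the vertex-of-the-hypercube property is essential.
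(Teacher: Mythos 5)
Your proof is correct, but there is no proof in the paper to measure it against: the paper does not prove Theorem~\ref{th:ess-all} at all, it imports it by citation as the Boolean restriction of Theorem~4 of Zolotykh and Shevchenko \cite{ZolotykhShevchenko1999}, where the statement is established in the more general setting of threshold functions of many-valued logic. Your argument is thus a self-contained elementary substitute for that citation. Both halves are sound. In the forward direction, the decisive point is exactly the one you isolate: a common point of $\mathrm{conv}(T_1\cup\{\vect{x}\})$ and $\mathrm{conv}(T_0\setminus\{\vect{x}\})$ is forced by the functional $\vect{w}\cdot(\cdot)$ to equal $\vect{x}$, and this is impossible because every point of $B^n$ is an extreme point of $[0,1]^n$; strict separation of two disjoint compact convex hulls then yields a threshold representation of the flipped function $g$, so $\vect{x}$ is essential. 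In the converse direction, the linear homotopy $h_\theta=\theta h_1+(1-\theta)h_0$ keeps every point other than $\vect{x}$ on its correct side (each value is a convex combination of two numbers of the required sign) while the intermediate value theorem slides $\vect{x}$ onto the hyperplane; your diagnosis of why the naive ``raise the threshold of a representation of $g$'' idea fails is also the right one. The only loose end is the degenerate possibility $\vect{w}_{\theta^*}=\vect{0}$: this forces $h_{\theta^*}\equiv 0$, hence $T_1=\emptyset$, i.e.\ $f$ is the constant zero function, a case you should dispose of in one line (for any $\vect{x}\in B^n$ the weights $w_i=2(\vect{x})_i-1$ with $t=\vect{w}\cdot\vect{x}$ give a genuine separating hyperplane through $\vect{x}$). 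In terms of trade-offs, the paper's citation buys generality (the $k$-valued result), while your argument buys a first-principles proof for the Boolean case using nothing beyond convex separation and continuity, which is arguably more useful to a reader of this paper than the external reference.
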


Thus, the set of all essential zeros (\textit{resp.} ones) of $f \in \mathcal{H}_n$ is the union of all points in $B^n$
	belonging to at least one separating hyperplane for the function $f$ (\textit{resp.} $\overline{f}$).

%%%%%%%%%%%%%%%
\subsection{The number of essential points vs the number of extremal points}
%%%%%%%%%%%%%%%

It was observed in \cite{AnthonyBrightwellShaweTaylor1995} that in the study of specification number of threshold functions, one can be restricted to positive functions. 
To prove Theorem~\ref{th:nested-spec}, the authors of \cite{AnthonyBrightwellShaweTaylor1995} first showed that for a positive
threshold function $f$ depending on all its variables the set of extremal points
specifies $f$. Then they proved that for any positive lro function $f$ of $n$ relevant variables
the number of extremal points is $n+1$.

In addition to proving Theorem~\ref{th:nested-spec}, the authors of \cite{AnthonyBrightwellShaweTaylor1995} also conjectured
that lro functions are the only  functions with the specification number $n+1$ in the class 
$\mathcal{H}_n$.

\begin{conjecture}\label{con:conjecture}{\rm\cite{AnthonyBrightwellShaweTaylor1995}}
	If $f \in \mathcal{H}_n$ %depends on all its variables and 
	has the specification number $n+1$, then $f$ is linear read-once.
\end{conjecture}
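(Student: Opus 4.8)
Since Conjecture~\ref{con:conjecture} is stated as an open conjecture and the introduction announces that it is in fact false, my plan is to \emph{resolve} it rather than to prove it, and I expect the resolution to be a disproof by explicit construction. Before committing to that I would first attempt a proof, in order to locate the precise obstruction. The natural starting point is the chain of inequalities available from the quoted results: for a positive threshold function $f$ depending on all $n$ of its variables the extremal points form a specifying set, so by Theorem~\ref{cl:ess-sigma} the essential points are contained among the extremal points and $\sigma_{\cl{H}_n}(f)\le r(f)$; moreover $r(f)\ge n+1$ with equality exactly for lro functions, while Theorem~\ref{th:th-spec} gives $\sigma_{\cl{H}_n}(f)\ge n+1$. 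Thus for an lro function all three quantities collapse to $n+1$, whereas for a non-lro $f$ one only knows $r(f)\ge n+2$. The conjecture is therefore equivalent to the assertion that for a non-lro positive threshold function at least $n+2$ of the extremal points are always essential, i.e. lie on some separating hyperplane.

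This is exactly where I expect the conjecture to break, so I would switch to building a counterexample: a positive threshold function $f$ on $n$ variables that is not lro (hence $r(f)\ge n+2$) but has exactly $n+1$ essential points. By Theorem~\ref{th:ess-all} an extremal zero of $f$ is essential iff some separating hyperplane passes through it, and dually an extremal one is essential iff some separating hyperplane for $\notl{f}$ passes through it. The design goal is to engineer a weight/threshold configuration whose admissible separating hyperplanes can touch exactly $n+1$ extremal points, while every remaining extremal point stays strictly on its own side of \emph{all} separating hyperplanes. Concretely, I would start from an lro weight configuration (whose $n+1$ extremal points are all essential) and perturb the weights so as to produce a non-lro threshold function carrying one additional extremal point placed ``deep'' inside its half-space, reachable by no valid separating hyperplane, thereby destroying the lro property without creating a new essential point.

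To verify a candidate $f$ I would use the two dual certificates in tandem. For each of the $n+1$ points I claim to be essential I exhibit an explicit separating hyperplane through it, a single linear feasibility witness per point. For each extra extremal point $\vect{x}$ I claim to be redundant I show it cannot be flipped inside $\cl{H}_n$: flipping $\vect{x}$ yields a positive function which is $k$-summable, and I display the relation $\sum \vect{x}_i = \sum \vect{y}_i$ of Theorem~\ref{th:asum}, certifying that no separating hyperplane reaches $\vect{x}$ and hence that $\vect{x}$ is not essential. (Flipping a maximal zero or minimal one keeps the function positive, so the only question is threshold-ness, which asummability settles.) Having fixed a seed example at the smallest workable $n$, I would lift it to all larger $n$ by repeatedly conjoining or disjoining a fresh variable in lro fashion; each such operation is expected to add exactly one new essential extremal point while preserving the remaining essential set, keeping the count at $n+1$ and the function non-lro.

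The main obstacle is the non-essentiality certification. Showing that a point \emph{is} essential needs only one separating hyperplane, but showing that a point is \emph{not} essential is a statement universally quantified over the whole polytope of admissible weight vectors, so no single inequality suffices. The asummability criterion of Theorem~\ref{th:asum} is what makes this tractable, converting the non-existence of a suitable hyperplane into the existence of a finite combinatorial witness, namely a balanced sum of true and false points. The delicate part is choosing the weights so that precisely one summability relation is forced at the redundant extremal point while all $n+1$ intended essential points remain hyperplane-reachable, and then checking that the lifting step never inadvertently turns a redundant point essential again.
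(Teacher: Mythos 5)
Your strategic outline coincides with the paper's actual resolution: the conjecture is false, and the paper's disproof (Theorem~\ref{prop:1}) certifies essentiality and non-essentiality of extremal points by exactly the two witnesses you name --- an explicit separating hyperplane through each essential point (Theorems~\ref{cl:ess-sigma} and~\ref{th:ess-all}), and a summability relation (Theorem~\ref{th:asum}) for each non-essential one. The genuine gap is that your proposal stops where the mathematics starts: no counterexample is ever produced, and in a disproof by construction the construction is the whole content. The paper exhibits, for every $n \geq 4$, the function $f_n$ with DNF $x_1x_2 \orl x_1x_3 \orl \cdots \orl x_1x_{n-1} \orl x_2x_3\cdots x_n$, shows it is threshold via the inequality $(2n-5)x_1 + 2(x_2+\cdots+x_{n-1}) + x_n \leq 2n-4$, lists its $n-1$ minimal ones $\vect{x}_i$ and its $n$ maximal zeros ($n-2$ points $\vect{y}_i$ together with $\vect{z}_1=(0,1,\dots,1,0)$ and $\vect{z}_2=(1,0,\dots,0,1)$), and then eliminates all the $\vect{y}_i$ at once with the single identity $\vect{x}_i + \vect{y}_i = \vect{z}_1 + \vect{z}_2$: any threshold function agreeing with $f_n$ everywhere except at $\vect{y}_i$ would have true points $\vect{x}_i,\vect{y}_i$ and false points $\vect{z}_1,\vect{z}_2$ with equal sums, hence be $2$-summable. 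Note also that your quantitative picture is off: by Theorem~\ref{th:extremal_main} a non-lro function has at least $n+2$ extremal points, but $f_n$ in fact has $2n-1$ of them, so one must certify $n-2$ points --- a number growing with $n$ --- as non-essential, not ``one additional extremal point placed deep inside its half-space''; your perturbation heuristic gives no indication of how to arrange that many redundancies with finitely many summability witnesses.

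The lifting step is a second, smaller gap. The claim that passing from $f$ to $x_{n+1} \andl f$ adds exactly one essential point is in fact provable: essentiality of a lifted point $(\vect{a},1)$ is equivalent to essentiality of $\vect{a}$ for $f$ (restrictions of threshold functions are threshold, and conjoining a fresh variable preserves thresholdness), and the one new extremal point $(1,\dots,1,0)$ can be shown essential by an explicit choice of weights. But you assert this as ``expected'' rather than prove it, and the paper avoids the induction altogether by making both the construction and its verification uniform in $n$. So: right architecture, identical in outline to the paper's, but with the counterexample --- the only part that required a new idea --- left as a declaration of intent.
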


In the next section, we show that this conjecture becomes a true statement if
we replace `specification number' by `number of extremal points'. 
Nonetheless, in spite of this result supporting the conjecture, we conclude the paper with a counterexample 
disproving it.

%%%%%%%%%%%%%%%%%%%%%%%%%%%%%%%%%%%%%%%%%%%%%%%%%%%%%%%%%%%%%%%%%%%%%%%%%%%%%%%%%%%%%
%%%%%%%%%%%%%%%%%%%%%%%%%%%%%%%%%%%%%%%%%%%%%%%%%%%%%%%%%%%%%%%%%%%%%%%%%%%%%%%%%%%%%
%%%%%%%%%%%%%%%%%%%%%%%%%%%%%%%%%%%%%%%%%%%%%%%%%%%%%%%%%%%%%%%%%%%%%%%%%%%%%%%%%%%%%
%%%%%%%%%%%%%%%%%%%%%%%%%%%%%%%%%%%%%%%%%%%%%%%%%%%%%%%%%%%%%%%%%%%%%%%%%%%%%%%%%%%%%
%%%%%%%%%%%%%%%%%%%%%%%%%%%%%%%%%%%%%%%%%%%%%%%%%%%%%%%%%%%%%%%%%%%%%%%%%%%%%%%%%%%%%
%%%%%%%%%%%%%%%%%%%%%%%%%%%%%%%%%%%%%%%%%%%%%%%%%%%%%%%%%%%%%%%%%%%%%%%%%%%%%%%%%%%%%
%%%%%%%%%%%%%%%%%%%%%%%%%%%%%%%%%%%%%%%%%%%%%%%%%
%%%%%%%%%%%%%%%%%%%%%%%%%%%%%%%%%%%%%%%%%%%%%%%%%

\section{Positive functions and the number of extremal points}
\label{sec:ex}

%%%%%%%%%%%%%%%%%%%%%%%%%%%%%%%%%%%%%%%%%%%%%%%%%
%%%%%%%%%%%%%%%%%%%%%%%%%%%%%%%%%%%%%%%%%%%%%%%%%
%%%%%%%%%%%%%%%%%%%%%%%%%%%%%%%%%%%%%%%%%%%%%%%%%%%%%%%%%%%%%%%%%%%%%%%%%%%%%%%%%%%%%
%%%%%%%%%%%%%%%%%%%%%%%%%%%%%%%%%%%%%%%%%%%%%%%%%%%%%%%%%%%%%%%%%%%%%%%%%%%%%%%%%%%%%
%%%%%%%%%%%%%%%%%%%%%%%%%%%%%%%%%%%%%%%%%%%%%%%%%%%%%%%%%%%%%%%%%%%%%%%%%%%%%%%%%%%%%
%%%%%%%%%%%%%%%%%%%%%%%%%%%%%%%%%%%%%%%%%%%%%%%%%%%%%%%%%%%%%%%%%%%%%%%%%%%%%%%%%%%%%
%%%%%%%%%%%%%%%%%%%%%%%%%%%%%%%%%%%%%%%%%%%%%%%%%%%%%%%%%%%%%%%%%%%%%%%%%%%%%%%%%%%%%
%%%%%%%%%%%%%%%%%%%%%%%%%%%%%%%%%%%%%%%%%%%%%%%%%%%%%%%%%%%%%%%%%%%%%%%%%%%%%%%%%%%%%

%%%%%%%%%%%%%%%%%%%%%%%%%%%%%%%%%%%%%%%%%%%%
%%%%%%%%%%%%%%%%%%%%%%%%%%%%%%%%%%%%%%%%%%%%

The main goal of this section is to prove the following theorem.
\begin{theorem}\label{th:extremal_main}
	Let $f = f(x_1, \ldots, x_n)$ be a positive function with $k \geq 0$ relevant variables. 
	Then the number of extremal points of $f$ is at least $k+1$. Moreover $f$
	has exactly $k+1$ extremal points if and only if $f$ is lro.
\end{theorem}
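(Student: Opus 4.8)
The plan is to prove both parts by induction on the number of relevant variables $k$, exploiting the recursive structure of lro functions via the nested formula definition. For the base case $k = 0$, the function is constant, has exactly one extremal point (either $\vect{0}$ as a minimal one, or $\vect{1}$ as a maximal zero), and is trivially lro, so the statement holds with $k + 1 = 1$. For the inductive step, the key observation is that every relevant variable $x_i$ of a positive function gives rise to at least one extremal point that ``witnesses'' its relevance, and the heart of the argument is to show that equality $r(f) = k+1$ forces a canalyzing structure that peels off one variable at a time.

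First I would establish the lower bound $r(f) \geq k+1$. The natural approach is to pick a relevant variable $x_i$ and examine the two restrictions $f_{|x_i = 0}$ and $f_{|x_i = 1}$, each a positive function on the remaining coordinates. I would relate the extremal points of $f$ to those of its restrictions: a minimal one of $f$ with $x_i = 1$ whose projection is a minimal one of $f_{|x_i=1}$, and similarly for maximal zeros with $x_i = 0$. The counting should show that extremal points only multiply or transfer as variables are reinstated, never decrease below the $k+1$ threshold; making this bookkeeping precise is a routine but careful part.

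The substantive direction is the characterization of equality. For the ``if'' direction (lro $\Rightarrow$ $r(f) = k+1$), I would use the nested formula: if $f = x_i \orl t$ or $f = x_i \andl t$ with $t$ an lro function on $k-1$ relevant variables, then by induction $t$ has exactly $k$ extremal points, and I would show the single added conjunction or disjunction contributes exactly one new extremal point. Concretely, for $f = x_i \andl t$, the maximal zeros of $f$ are the maximal zeros of $t$ (extended by $x_i = 1$) together with one new maximal zero where $x_i = 0$, while the minimal ones carry over with $x_i = 1$; a direct count yields $(k) + 1 = k+1$. For the harder ``only if'' direction, assuming $r(f) = k+1$ I must deduce that $f$ is lro. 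The plan is to show that equality forces $f$ to be canalyzing, i.e. some restriction $f_{|x_i = 0} \equiv \vect{0}$ or $f_{|x_i = 1} \equiv \vect{1}$; once canalyzing, peeling off that variable yields a positive function on $k-1$ relevant variables with exactly $k$ extremal points, and induction finishes the job.

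The main obstacle will be this last implication: proving that $r(f) = k+1$ forces a canalyzing variable. The difficulty is that without canalyzing structure, each relevant variable seems to demand its ``own'' extremal point on both the zero side and the one side, which would push the count strictly above $k+1$; I expect the argument to hinge on showing that if no variable is canalyzing, then one can exhibit at least $k+2$ distinct extremal points, perhaps by a pigeonhole or counting argument over how minimal ones and maximal zeros distribute across the coordinates. Formalizing exactly why the absence of a canalyzing variable creates an ``extra'' extremal point—and ruling out degenerate overlaps—is the crux, and I would likely isolate it as a lemma relating $r(f)$, $r(f_{|x_i=0})$, and $r(f_{|x_i=1})$ with a strict inequality whenever neither restriction is constant.
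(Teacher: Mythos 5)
Your skeleton matches the paper's at a high level: induction, the canalyzing/lro case giving exactly $k+1$ extremal points (your ``if'' direction and the peeling argument are essentially the paper's Lemma~\ref{lem:split}), and the whole theorem reduced to showing that a positive \emph{non-canalyzing} function with $k$ relevant variables has at least $k+2$ extremal points. But that reduction target is precisely what you leave unproved --- you yourself call it the crux and offer only ``perhaps a pigeonhole or counting argument'' --- and it is where nearly all of the paper's work lies (Sections~\ref{sec:str}, \ref{sec:ns1}, \ref{sec:ns2}). The paper needs: (i) a global lemma proved via Hall's theorem (Lemma~\ref{lem:acyclic}), stating that for every set $S$ of $j$ relevant variables there are at least $j+1$ extremal points corresponding to $S$; (ii) a case split according to whether \emph{every} restriction of the non-canalyzing $f$ is canalyzing or some restriction is not; (iii) in the first case, a graph-theoretic argument (Claims~\ref{cl:com_var}--\ref{cl:non-split}) with an exceptional configuration at $n=4$; (iv) in the second case, careful bookkeeping of which extremal points of $f_{|x_i=0}$ and $f_{|x_i=1}$ are shared, combined with (i). None of these ideas appears in your proposal, so the proof is incomplete exactly where it matters.

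Moreover, the fallback you do propose --- a lemma relating $r(f)$, $r(f_{|x_i=0})$, $r(f_{|x_i=1})$ with a strict inequality whenever neither restriction is constant --- cannot work as stated. Extremal points of the two restrictions can coincide, and each common extremal point produces only one extremal point of $f$. Concretely, take $f=(x_1\orl x_2)\andl(x_3\orl x_4)$ and $i=1$: neither restriction is constant ($f_{|x_1=0}=x_2\andl(x_3\orl x_4)$, $f_{|x_1=1}=x_3\orl x_4$), yet $r(f)=6<7=r(f_{|x_1=0})+r(f_{|x_1=1})$, because $(x_2,x_3,x_4)=(1,0,0)$ is a maximal zero of both restrictions. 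So no inequality of the form $r(f)\ge r(f_{|x_i=0})+r(f_{|x_i=1})$ holds, let alone a strict one. Weaker local inequalities (e.g.\ in terms of the maximum) cannot suffice either, because these three numbers do not record how many relevant variables of $f$ become irrelevant in a restriction: if $f_{|x_i=0}$ retains only $m$ relevant variables with $m$ much smaller than $k-1$, then $r(f_{|x_i=0})$ may be close to $m$, far below $k$, and the deficit of roughly $k-m$ points must be recovered from extremal points of $f$ tied specifically to the variables that died in the restriction --- which is exactly what Lemma~\ref{lem:acyclic} together with Claim~\ref{cl:proper_pairs} supplies in the paper. The same difficulty already undermines your ``routine bookkeeping'' claim for the lower bound $r(f)\ge k+1$. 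Note finally that the function $(x_1\orl x_2)\andl(x_3\orl x_4)$ above is non-canalyzing, has all its restrictions canalyzing, and has exactly $k+2$ extremal points, so the inequality you need is tight; any correct argument must accommodate this example rather than produce extra slack.
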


%%%%%%%%%%%%%%%%%%%%%%%%%%%%%%%%%%%%%%%%%%%%
%%%%%%%%%%%%%%%%%%%%%%%%%%%%%%%%%%%%%%%%%%%%

We will prove Theorem~\ref{th:extremal_main} by induction on $n$. The statement
is easily verifiable for $n=1$. Let $n > 1$ and assume that the theorem is true for
functions of at most $n-1$ variables. In the rest of the section we prove
the statement for $n$-variable functions. Our strategy consists of three major steps.
First, we prove the statement for canalyzing functions in Section~\ref{sec:split}. 
This case includes lro functions.
Then, in Section~\ref{sec:ns1}, we prove the result 
for non-canalyzing functions $f$ such that for each variable   $x_i$  both restrictions 
$f_{|x_i=0}$ and $f_{|x_i=1}$ are canalyzing. Finally, in Section~\ref{sec:ns2}, we consider the case of non-canalyzing
functions $f$ depending on a variable $x_i$ such that at least one of the restrictions 
$f_{|x_i=0}$ and $f_{|x_i=1}$ is non-canalyzing. 
In Section~\ref{sec:str}, we introduce some terminology and prove a preliminary result.

%%%%%%%%%%%%%%%%%%%%%%%%%%%%%%%%%%%%%%%%%%%%%%%%%%%%%%%%%%%%%%%%%%%%%%%%%%%%%%%%%%%%%
%%%%%%%%%%%%%%%%%%%%%%%%%%%%%%%%%%%%%%%%%%%%%%%%%%%%%%%%%%%%%%%%%%%%%%%%%%%%%%%%%%%%%

\subsection{A property of extremal points}
\label{sec:str}

%%%%%%%%%%%%%%%%%%%%%%%%%%%%%%%%%%%%%%%%%%%%%%%%%%%%%%%%%%%%%%%%%%%%%%%%%%%%%%%%%%%%%
%%%%%%%%%%%%%%%%%%%%%%%%%%%%%%%%%%%%%%%%%%%%%%%%%%%%%%%%%%%%%%%%%%%%%%%%%%%%%%%%%%%%%

We say that a maximal zero (\textit{resp.} minimal one) $\vect{y}$ of $f(x_1, \ldots, x_n)$
\textit{corresponds to a variable} $x_i$ if $(\vect{y})_i = 0$ (\textit{resp.} $(\vect{y})_i = 1$).
It is not difficult to see that for any relevant variable $x_i$, there exists at least one minimal one and 
at least one maximal zero corresponding to $x_i$. 
We say that an extremal point of $f$ {\it corresponds to a set $S$ of variables} if 
it corresponds to at least one variable in $S$. 

\begin{lemma}\label{lem:acyclic}
	For every set $S$ of $k$ relevant variables of a positive function $f$, there exist at least $k+1$ extremal points corresponding to this set.
\end{lemma}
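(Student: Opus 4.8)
The plan is to bound $|E_S|$, where $E_S$ denotes the set of extremal points of $f$ corresponding to $S$, by building an auxiliary graph on vertex set $E_S$ and invoking the elementary fact that a forest with $m$ edges has at least $m+1$ vertices. We may assume $k\ge 1$. The idea is to attach to each variable of $S$ one edge, so that the graph has exactly $k$ distinct edges, and then to arrange that these edges are \emph{acyclic}. A forest with $k$ edges spans at least $k+1$ vertices, all of which lie in $E_S$, so $|E_S|\ge k+1$, as required.

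To produce the edges, fix $x_i\in S$. Since $x_i$ is relevant, there is a minimal one $\vect{u}$ with $(\vect{u})_i=1$. Let $\vect{u}'$ be obtained from $\vect{u}$ by flipping coordinate $i$ to $0$; as $\vect{u}'\sbelow\vect{u}$ and $\vect{u}$ is a minimal one, $f(\vect{u}')=0$, so $\vect{u}'$ lies below some maximal zero $\vect{z}$. I claim $(\vect{z})_i=0$: otherwise $\vect{u}\below\vect{z}$ (because $\vect{u}'\below\vect{z}$ and $(\vect{z})_i=1$ give $\vect{u}\below\vect{z}$), and positivity would force $f(\vect{z})=1$, a contradiction. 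Hence both $\vect{u}$ and $\vect{z}$ correspond to $x_i$, so both belong to $E_S$, and I take $e_i=\{\vect{u},\vect{z}\}$ as the edge for $x_i$. Note the considerable freedom here: \emph{any} minimal one with $i$th coordinate $1$ together with \emph{any} maximal zero with $i$th coordinate $0$ is an admissible pair of endpoints for the edge of color $i$.

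The heart of the argument, and the reason for the name of the lemma, is to make these choices so that the resulting graph is a forest. I expect two viable routes. The first is an incremental, greedy construction: process the variables of $S$ one at a time, maintaining a forest, and for the current variable $x_i$ add an edge joining a minimal one with $i$th coordinate $1$ to a maximal zero with $i$th coordinate $0$ lying in two different components; the large freedom in the endpoints should make such a component-merging (hence cycle-free) edge available. The second route is a direct exchange argument: assuming a selection of $k$ edges contains a cycle $\vect{u}_1,\vect{z}_1,\vect{u}_2,\vect{z}_2,\dots$ alternating between minimal ones and maximal zeros, one chases the flip relations (each $\vect{z}_t$ lies above the point obtained from $\vect{u}_t$ by zeroing its coordinate $i_t$) around the cycle to construct a point placing some maximal zero above a minimal one, contradicting positivity; this permits rerouting or deleting an edge to reduce the number of cycles.

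Establishing acyclicity is where I expect the real difficulty to lie. Because the endpoints are far from unique, a careless selection can create cycles, which here have even length at least $4$ since the graph is bipartite between minimal ones and maximal zeros; one must therefore argue either that component-merging choices are always available in the greedy process, or that such cycles are combinatorially impossible for a positive function. Once a forest of $k$ distinct edges on $E_S$ is secured, its vertices number $k$ plus the number of components, hence at least $k+1$, and since all of them belong to $E_S$ the proof is complete. The base case $k=1$ is immediate: any minimal one and any maximal zero corresponding to a single relevant variable are distinct, giving the required two extremal points.
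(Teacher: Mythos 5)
Your reduction to graph theory is set up correctly as far as it goes: for each relevant $x_i \in S$ there is indeed a minimal one $\vect{u}$ with $(\vect{u})_i=1$ and a maximal zero $\vect{z}$ with $(\vect{z})_i=0$ (your flipping argument is fine), and a forest with $k$ distinct edges does have at least $k+1$ vertices. But the proof has a genuine gap at exactly the point you flag yourself: you never prove that the $k$ edges can be chosen distinct and acyclic, and both routes you offer are hopes rather than arguments. The greedy route is unjustified: nothing you have said rules out that, at some step, every candidate edge for the current variable has both endpoints inside a single component of the partial forest, and you supply no exchange or augmentation step to repair such a situation. The cycle-chasing route cannot work in the form described, because cycles and repeated edges genuinely do occur for bad selections --- for the majority function $x_1x_2 \vee x_1x_3 \vee x_2x_3$, the pair $\{(1,1,0),(0,0,1)\}$ is a candidate edge for both $x_1$ and $x_2$ --- so the existence of a cycle is not in itself contradictory with positivity; one would have to show that a cycle can always be broken by rerouting, which is again precisely the missing argument.

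Moreover, the missing step is not a technicality: it carries the entire weight of the lemma. By Rado's theorem (the matroid generalization of Hall's theorem) applied to the graphic matroid of the bipartite graph on extremal points, an acyclic system of distinct representative edges for $S$ exists if and only if, for every subset $S' \subseteq S$, the union of the candidate edges of the variables in $S'$ has graphic rank at least $|S'|$; since the candidate edges of a single variable form a connected (complete bipartite) subgraph, this rank condition is essentially equivalent to the statement that every subset $S'$ of relevant variables has at least $|S'|+1$ corresponding extremal points --- that is, to the lemma itself applied to all subsets of $S$. So completing your plan by matroid tools would be circular, and completing it directly requires a new idea. For comparison, the paper proceeds quite differently: it takes a minimal counterexample $S$, observes that minimality forces exactly $k$ corresponding extremal points while all proper subsets satisfy the surplus bound, applies Hall's theorem to obtain a bijection $x_i \mapsto \vect{a}^i$ between the variables and these $k$ points, and then builds a single hybrid point $\vect{b}$ (whose first $k$ coordinates are set to $1$ or $0$ according to whether $\vect{a}^i$ is a maximal zero or a minimal one) such that, whether $f(\vect{b})=0$ or $f(\vect{b})=1$, one finds an extremal point corresponding to $S$ that differs from every $\vect{a}^j$ --- a contradiction. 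That hybrid-point construction is the ingredient your proposal lacks.
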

\begin{proof}
Let $S$ be a minimal counterexample and let $P$ be the set of extremal points corresponding to the variables in $S$. 
Without loss of generality we assume that $S$ consists of the first $k$ variables of the function, i.e. 
$S=\{x_1,\ldots,x_k\}$. Due to the minimality of $S$ we may also assume that  $|P|=k$ and for every proper subset $S'$ of $S$ there exist at least $|S'|+1$ extremal points corresponding to $S'$.
This implies, by  Hall's Theorem of distinct representatives \cite{Hall}, that there exists a bijection between $S$ and $P$ 
mapping variable $x_i$ to a point $\vect{a}^i\in P$ {\it corresponding} to $x_i$.

Let $\vect{a}$ be any maximal zero in $P$. We denote by $\vect{b}$ the point which coincides with $\vect{a}$ in all coordinates beyond the first $k$,
and for each $i\in \{1,2,\ldots,k\}$ we define the $i$-th coordinate of $\vect{b}$ to be $1$ if $\vect{a}^i$ is a maximal zero, and to be $0$ if $\vect{a}^i$ is a minimal one.

Assume first that $f(\vect{b})=0$ and let $\vect{c}$ be any maximal zero above  $\vect{b}$ (possibly $\vect{b}=\vect{c}$).
If $(\vect{c})_1=\ldots=(\vect{c})_k=1$, then $\vect{a} \sbelow \vect{c}$, contradicting that $\vect{a}$ is a maximal zero. Therefore, 
$(\vect{c})_i=0$ for some $1\le i\le k$ and hence $\vect{c}$ is a maximal zero corresponding to $x_i\in S$. Moreover, $\vect{c}$ is different from any maximal zero $\vect{a}^j\in P$
because the $j$-th coordinate of $\vect{a}^j\in P$ is 0, while the $j$-th coordinate of $\vect{c}$ is 1.

Suppose now that $f(\vect{b})=1$ and let $\vect{c}$ be any minimal one below  $\vect{b}$ (possibly $\vect{b}=\vect{c}$).
If $(\vect{c})_1=\ldots=(\vect{c})_k=0$, then $\vect{c} \sbelow \vect{a}$, contradicting the positivity of $f$. Therefore, 
$(\vect{c})_i=1$ for some $1\le i\le k$ and hence $\vect{c}$ is a minimal one corresponding to $x_i\in S$. Moreover, $\vect{c}$ is different from any minimal one $\vect{a}^j\in P$
because the $j$-th coordinate of $\vect{a}^j\in P$ is 1, while the $j$-th coordinate of $\vect{c}$ is 0.

A contradiction in both cases shows that there is no counterexamples to the statement of the lemma.
\end{proof}

%%%%%%%%%%%%%%%%%%%%%%%%%%%%%%%%%%%%%%%%%%%%%%%%%
%%%%%%%%%%%%%%%%%%%%%%%%%%%%%%%%%%%%%%%%%%%%%%%%%
\subsection{Canalyzing functions}
\label{sec:split}
%%%%%%%%%%%%%%%%%%%%%%%%%%%%%%%%%%%%%%%%%%%%%%%%%
%%%%%%%%%%%%%%%%%%%%%%%%%%%%%%%%%%%%%%%%%%%%%%%%%

\begin{lemma}\label{lem:split}
	Let $f = f(x_1, \ldots, x_n)$ be a positive canalyzing function 
	with $k \geq 0$ relevant variables. 
	Then the number of extremal points of $f$ is at least $k+1$. Moreover $f$
	has exactly $k+1$ extremal points if and only if $f$ is lro.
\end{lemma}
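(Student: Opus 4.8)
The lower bound is immediate: applying Lemma~\ref{lem:acyclic} to the set $S$ of all $k$ relevant variables of $f$ already yields at least $k+1$ extremal points. So the substance of the lemma is the equality characterization, which I would prove inside the running induction on $n$. The case $k=0$ (equivalently, $f$ constant) is trivial: a constant function is lro and has exactly one extremal point, namely $\vect{1}$ when $f\equiv\vect{\textup{0}}$ and $\vect{0}$ when $f\equiv\vect{\textup{1}}$, so $r(f)=1=k+1$. Hence assume $f$ is non-constant.

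Since $f$ is positive and canalyzing, by the remark following the definition of canalyzing functions there is an index $i$ with $f_{|x_i=0}\equiv\vect{\textup{0}}$ or $f_{|x_i=1}\equiv\vect{\textup{1}}$; as $f$ is non-constant this $x_i$ is relevant, and after relabelling I take $i=n$. In the first case $f=x_n\andl h$ with $h=f_{|x_n=1}$, and in the second $f=x_n\orl g$ with $g=f_{|x_n=0}$, where $h$ and $g$ are positive functions of $x_1,\dots,x_{n-1}$. The two cases are dual, so I would treat $f=x_n\orl g$ in detail. One checks that for $j<n$ the variable $x_j$ is relevant for $f$ iff it is relevant for $g$, while $x_n$ is relevant because $g\not\equiv\vect{\textup{1}}$; hence if $g$ has $m$ relevant variables then $k=m+1$.

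The heart of the argument is the identity $r(f)=r(g)+1$, which I would establish by describing $Z^f$ and $U^f$ explicitly. Every zero of $f$ has $x_n=0$, and $(\vect{z},0)$ is a maximal zero of $f$ exactly when $\vect{z}\in Z^g$, so $|Z^f|=|Z^g|$. Among true points with $x_n=1$ the only minimal one is $(\vect{0},1)$ (here $g(\vect{0})=0$ is used, valid since $g\not\equiv\vect{\textup{1}}$), whereas a true point $(\vect{z},0)$ is minimal exactly when $\vect{z}\in U^g$; thus $|U^f|=|U^g|+1$, and adding these gives $r(f)=r(g)+1$. The dual bookkeeping for $f=x_n\andl h$ produces a single new maximal zero $(\vect{1},0)$ and gives $r(f)=r(h)+1$. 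Since $g$ is a positive function of at most $n-1$ variables, the inductive hypothesis yields $r(g)=m+1$ iff $g$ is lro, whence $r(f)=k+1 \iff r(g)=m+1 \iff g \text{ is lro}$. Finally I match ``$g$ is lro'' with ``$f$ is lro'': if $g$ is lro then a nested formula for $g$ does not contain $x_n$, so $x_n\orl g$ is again a nested formula and $f$ is lro; conversely $g=f_{|x_n=0}$ is a restriction of $f$, and lro functions are closed under restriction, being the intersection of read-once and threshold functions (each closed under restriction), so $f$ lro forces $g$ lro.

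The main obstacle I anticipate is the explicit determination of $Z^f$ and $U^f$ from $Z^g$ and $U^g$: the bookkeeping must confirm that passing from $g$ to $x_n\orl g$ creates exactly one new extremal point, the minimal one $(\vect{0},1)$ (respectively the maximal zero $(\vect{1},0)$ in the conjunctive case), and destroys none. A secondary point requiring care is verifying that the canalyzing variable is genuinely relevant, which is precisely where non-constancy of $f$ enters, together with the closure of lro under restriction used in the backward direction of the last equivalence.
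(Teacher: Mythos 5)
Your proposal is correct and follows essentially the same route as the paper: isolate a canalyzing variable, decompose $f$ as $x_n \orl g$ (the paper treats the dual case $x_i \andl f_1$), establish $r(f)=r(g)+1$, count relevant variables, and close via the induction hypothesis together with the equivalence ``$f$ lro $\iff g$ lro.'' The only (harmless) deviations are that you obtain the lower bound directly from Lemma~\ref{lem:acyclic} rather than from the case analysis, and you spell out the bookkeeping and the closure of lro functions under restriction, which the paper leaves as ``easy to see.''
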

\begin{proof}
	The case $k=0$ is trivial, and therefore we assume that $k \geq 1$.

	Let $x_i$ be a variable of $f$ such that $f_{|x_i = 0} \equiv \vect{0}$
	(the case $f_{|x_i = 1} \equiv \vect{1}$ is similar). Let $f_0  =  f_{|x_i = 0}$ and $f_1  =  f_{|x_i = 1}$.
	Clearly, $x_i$ is a relevant variable of $f$, otherwise $f \equiv \vect{0}$, that is, $k=0$.
	Since every relevant variable of $f$ is relevant for at least one of the functions
	$f_0$ and $f_1$, we conclude that $f_1$ has $k-1$ relevant variables.
	
	The equivalence $f_0 \equiv \vect{0}$ implies that for every extremal point 
	$(\alpha_1, \ldots, \alpha_{i-1}, \alpha_{i+1}, \ldots, \alpha_n)$ of $f_1$, the corresponding
	point $(\alpha_1, \ldots, \alpha_{i-1}, 1, \alpha_{i+1}, \ldots, \alpha_n)$ is extremal for $f$.
	For the same reason, there is only one extremal point of $f$ with the $i$-th coordinate being
	equal to $0$, namely, the point with all coordinates equal to $1$, except for the $i$-th coordinate.
	Hence, $r(f) = r(f_1) + 1$. 
	\begin{enumerate}
		\item If $f_1$ is lro, then $f$ is also lro, since 
		$f$ can be expressed as $x_i \andl f_1$. %TODO: explain?
		By the induction hypothesis $r(f_1) = k$ and therefore $r(f) = k+1$.
		
		\item If $f_1$ is not lro, then  
		 $f$ is also not lro, which is easy to see.
		By the induction hypothesis $r(f_1) > k$ and therefore $r(f) > k+1$.
	\end{enumerate}
\end{proof}

%%%%%%%%%%%%%%%%%%%%%%%%%%%%%%%%%%%%%%%%%%%%%%%%%
%%%%%%%%%%%%%%%%%%%%%%%%%%%%%%%%%%%%%%%%%%%%%%%%%
\subsection{Non-canalyzing functions with canalyzing restrictions}
\label{sec:ns1}
%%%%%%%%%%%%%%%%%%%%%%%%%%%%%%%%%%%%%%%%%%%%%%%%%
%%%%%%%%%%%%%%%%%%%%%%%%%%%%%%%%%%%%%%%%%%%%%%%%%

In this section, we study non-canalyzing positive functions such that for each variable $x_i$ both restrictions $f_{|x_i=0}$ and $f_{|x_i=1}$ are canalyzing.

First we remark that all variables of those functions are relevant.
Indeed, if such a function has an irrelevant variable then the function is canalyzing. 

\begin{claim}\label{cl:all_var_are_rel}
Let $f = f(x_1, \ldots, x_n)$ be a positive non-canalyzing function such that for each variable $x_i$ both restrictions $f_{|x_i=0}$ and $f_{|x_i=1}$ are canalyzing.
Then all variables of $f$ are relevant.
\end{claim}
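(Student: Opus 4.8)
The plan is to establish the statement exactly as the remark preceding it suggests: if $f$ had an irrelevant variable, then $f$ would itself be canalyzing, contradicting the hypothesis. So I would argue by contradiction, assuming that some variable $x_j$ is irrelevant for $f$, i.e. $f_{|x_j=0} \equiv f_{|x_j=1}$, and aim to produce a variable whose restriction makes $f$ canalyzing.

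The first real step exploits the assumption on the restrictions of $f$. Since $f_{|x_j=0}$ is one of the two restrictions associated with $x_j$, it is canalyzing by hypothesis. Hence there exist a variable $x_i$ — necessarily with $i \neq j$, since $f_{|x_j=0}$ is a function of the variables other than $x_j$ — and a value $\alpha \in \{0,1\}$ such that $(f_{|x_j=0})_{|x_i=\alpha}$ is a constant function.

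The heart of the argument is to transfer this constancy from $f_{|x_j=0}$ back up to $f$ itself. Because $x_j$ is irrelevant for $f$ and $i \neq j$, the variable $x_j$ remains irrelevant for the restriction $f_{|x_i=\alpha}$; consequently $f_{|x_i=\alpha}$ is completely determined by its further restriction $(f_{|x_i=\alpha})_{|x_j=0}$. Since restrictions to distinct coordinates commute, $(f_{|x_i=\alpha})_{|x_j=0} = (f_{|x_j=0})_{|x_i=\alpha}$, which is constant. Therefore $f_{|x_i=\alpha}$ is itself constant, so $f$ is canalyzing, contradicting the hypothesis that $f$ is non-canalyzing. This contradiction shows that $f$ has no irrelevant variable, i.e. all variables of $f$ are relevant.

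I expect no serious obstacle here: the only point requiring care is the bookkeeping in the third step — verifying that irrelevance of $x_j$ for $f$ descends to $f_{|x_i=\alpha}$ and that restrictions to the distinct indices $i$ and $j$ commute — and both facts are immediate from the definition of restriction given in Section~\ref{sec:pre}. Note that positivity of $f$ plays no role in this particular claim; it is used only elsewhere in the section.
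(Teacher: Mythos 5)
Your proof is correct and follows essentially the same route as the paper: assume an irrelevant variable $x_j$, use the hypothesis that $f_{|x_j=0}$ is canalyzing to obtain a constant restriction $(f_{|x_j=0})_{|x_i=\alpha}$, and lift that constancy back to $f_{|x_i=\alpha}$ via the irrelevance of $x_j$, contradicting non-canalyzingness. The only (harmless) difference is that the paper phrases the constant restriction using the positivity observation stated after the definition of canalyzing functions, whereas you correctly note that positivity is not actually needed for this claim.
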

\begin{proof}
Let $x_i$ be irrelevant, then $f_{|x_i=0} \equiv f_{|x_i=1}$.
But $f_{|x_i=0}$, $f_{|x_i=1}$ are canalyzing, hence there exists $p \in [n]$ such that 
$f_{|x_i=0,\,x_p=0} \equiv f_{|x_i=1,\,x_p=0} \equiv \vect{0}$ or 
$f_{|x_i=0,\,x_p=1} \equiv f_{|x_i=1,\,x_p=1} \equiv \vect{1}$.
In the former case $f_{|x_p=0} \equiv \vect{0}$,
in the latter case $f_{|x_p=1} \equiv \vect{1}$.
In any case $f$ is canalyzing. Contradiction.
\end{proof}

\begin{claim}\label{cl:com_var}
	Let $f = f(x_1, \ldots, x_n)$ be a positive non-canalyzing function such that for each variable $x_i$ both restrictions $f_{|x_i=0}$ and $f_{|x_i=1}$ are canalyzing.
Then for each $i$, 
\begin{itemize}
\item[(a)] there exists a maximal zero that contains $0$'s in exactly two coordinates one of which is $i$,  
\item[(b)] there exists a minimal one that contains $1$'s in exactly two coordinates one of which is $i$.
\end{itemize}
\end{claim}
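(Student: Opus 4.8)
The plan is to fix a coordinate $i$ and convert the canalyzing property of the two restrictions $f_{|x_i=0}$ and $f_{|x_i=1}$ into the existence of the two required extremal points. Throughout I will rely on the following consequence of $f$ being positive and non-canalyzing: no single-variable restriction of $f$ is constant. Indeed, $f_{|x_j=0}\equiv\vect{0}$ or $f_{|x_j=1}\equiv\vect{1}$ would make $f$ canalyzing by definition, while $f_{|x_j=0}\equiv\vect{1}$ (resp.\ $f_{|x_j=1}\equiv\vect{0}$) would force $f\equiv\vect{1}$ (resp.\ $f\equiv\vect{0}$) by positivity, which is again canalyzing. In particular every $x_j$ is relevant, as already noted in Claim~\ref{cl:all_var_are_rel}.

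First I would record an auxiliary observation that renders ``exactly two coordinates'' points automatically extremal. Since $f_{|x_j=0}\not\equiv\vect{0}$ and $f_{|x_j=0}$ is positive, its value at the all-ones input is $1$; hence the point of $B^n$ having a single $0$, in coordinate $j$, is a true point of $f$. Dually, the point having a single $1$, in coordinate $j$, is a false point of $f$. Consequently, a false point with exactly two $0$'s, say in coordinates $i$ and $p$, is automatically a maximal zero: every point strictly above it has at most one $0$ and is therefore a true point. The symmetric statement holds for false-to-true, giving that a true point with exactly two $1$'s is automatically a minimal one.

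For part (a), I would apply the elementary characterization of positive canalyzing functions (a positive canalyzing $g$ satisfies $g_{|x_p=0}\equiv\vect{0}$ or $g_{|x_p=1}\equiv\vect{1}$ for some $p$) to the positive canalyzing function $f_{|x_i=0}$: there is a coordinate $p\neq i$ with $f_{|x_i=0,\,x_p=0}\equiv\vect{0}$ or $f_{|x_i=0,\,x_p=1}\equiv\vect{1}$. The second alternative must be discarded, and this is the crux: $f_{|x_i=0,\,x_p=1}\equiv\vect{1}$ together with positivity (raising $x_i$ from $0$ to $1$) yields $f_{|x_p=1}\equiv\vect{1}$, so $f$ would be canalyzing, a contradiction. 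Hence $f_{|x_i=0,\,x_p=0}\equiv\vect{0}$ for some $p\neq i$, and the point with $0$'s exactly in coordinates $i$ and $p$ is a false point of $f$; by the auxiliary observation it is a maximal zero, which proves (a).

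Part (b) is completely dual: applying the characterization to $f_{|x_i=1}$ yields some $p\neq i$ with $f_{|x_i=1,\,x_p=0}\equiv\vect{0}$ or $f_{|x_i=1,\,x_p=1}\equiv\vect{1}$, and now the first alternative is excluded because, by positivity, it propagates to $f_{|x_p=0}\equiv\vect{0}$ and makes $f$ canalyzing. The surviving alternative gives a true point with $1$'s exactly in coordinates $i$ and $p$, which by the auxiliary observation is a minimal one. The main obstacle, and the only place where the non-canalyzing hypothesis on $f$ is genuinely used, is precisely this elimination of the ``wrong'' branch of the canalyzing dichotomy: one must verify that the discarded case propagates through a single further restriction to make $f$ itself canalyzing, contradicting the hypothesis.
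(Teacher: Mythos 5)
Your proof is correct and takes essentially the same route as the paper's: apply the positive-canalyzing characterization to $f_{|x_i=0}$ (resp.\ $f_{|x_i=1}$), eliminate the wrong branch of the dichotomy by propagating it through positivity to a constant single-variable restriction of $f$ (contradicting non-canalyzing), and then check that the resulting weight-two false/true point is extremal. The only cosmetic difference is that you isolate the extremality check as a standalone observation (points with a single $0$ or a single $1$ are automatically true, resp.\ false, points), whereas the paper verifies the same fact inline via the identical non-canalyzing argument.
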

\begin{proof}
  Fix an $i$ and denote $f_0 = f_{|x_i=0}$, $f_1 = f_{|x_i=1}$.
	Since $f_0$ is canalyzing, there exists $p \in [n]$ such that ${f_0}_{|x_p=0} \equiv \vect{0}$ or 
	${f_0}_{|x_p=1} \equiv \vect{1}$. We claim that the latter case is impossible. 
	Indeed, the positivity of $f$ and ${f_0}_{|x_p=1} \equiv \vect{1}$ imply ${f_1}_{|x_p=1} \equiv \vect{1}$, and therefore 
	$f_{|x_p=1} \equiv \vect{1}$.
	This contradicts the assumption that $f$ is non-canalyzing. 
	Thus, ${f_0}_{|x_p=0} \equiv \vect{0}$.
	Now we claim that the Boolean point $\vect{y}$ with exactly two $0$'s in coordinates $i$ and $p$ is a maximal zero.
	Indeed, if $f$ in at least one of three points above $\vect{y}$ is $0$, then, by positivity of $f$, 
	$f_{|x_i=0} = 0$ or $f_{|x_p=0}$, which contradicts the assumption that $f$ is non-canalyzing. 
	
	Similarly, one can show that ${f_1}_{|x_r=1} \equiv \vect{1}$ for some $r \in [n]$ implying that the Boolean point with exactly two $1$'s in coordinates $i$ and $r$ is a minimal one. 
\end{proof}

\begin{claim}\label{cl:VL}
	Let $f = f(x_1, \ldots, x_n)$ be a positive non-canalyzing function such that for each variable $x_i$ both restrictions $f_{|x_i=0}$ and $f_{|x_i=1}$ are canalyzing.
	Then there is a minimal one $\vect{\textup{y}}$ of Hamming weight $2$ such that $\overline{\vect{\textup{y}}}$ is a maximal zero, unless $n=4$ in which case $f$ has $6$ extremal points.
\end{claim}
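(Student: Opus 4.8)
The plan is to translate the two families of extremal points produced by Claim~\ref{cl:com_var} into edges of two graphs on the vertex set $[n]$, and then to reduce the statement to a combinatorial fact about cross-intersecting families of edges. I would define a graph $G_1$ by letting $\{i,j\}$ be an edge exactly when the weight-$2$ point with $1$'s in coordinates $i,j$ is a minimal one of $f$, and a graph $G_0$ by letting $\{i,j\}$ be an edge exactly when the point with $0$'s in precisely the coordinates $i,j$ is a maximal zero of $f$. Under this encoding the point $\vect{y}$ sought in the claim exists if and only if $G_0$ and $G_1$ share an edge: a common edge $\{i,j\}$ gives a weight-$2$ minimal one $\vect{y}$ whose complement $\overline{\vect{y}}$ has $0$'s exactly in $i,j$ and is a maximal zero. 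By Claim~\ref{cl:com_var}, every coordinate lies on an edge of $G_1$ and on an edge of $G_0$, so neither graph has an isolated vertex.

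The crucial point is that positivity forces $G_0$ and $G_1$ to be \emph{cross-intersecting}: every edge of $G_1$ meets every edge of $G_0$. Indeed, if $\{i,j\}\in G_1$ and $\{p,q\}\in G_0$ were disjoint, then the weight-$2$ true point on $\{i,j\}$ would lie below the maximal zero whose only $0$'s are in $p,q$ (its coordinates $i,j$ being $1$), and positivity would make that zero a one, a contradiction. I would then prove a structural dichotomy: if $G_1$ contains two disjoint edges $\{a,b\}$ and $\{c,d\}$, then every edge of $G_0$, meeting both, lies inside the $4$-set $\{a,b,c,d\}$; since $G_0$ covers all of $[n]$ this forces $n=4$. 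By symmetry the same holds if $G_0$ contains two disjoint edges.

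Hence, for $n\ge 5$ neither graph contains two disjoint edges, so each is an intersecting family of edges and therefore a star or a triangle; as each must cover all $n\ge 5$ vertices, a triangle is impossible and each is a full star $K_{1,n-1}$. Cross-intersection then forces the two stars to share their centre $c$ — otherwise one could pick disjoint edges $\{c_1,v\}\in G_1$ and $\{c_0,w\}\in G_0$ with $v,w\in[n]\setminus\{c_0,c_1\}$ distinct — and every edge at $c$ belongs to both graphs, producing the required $\vect{y}$. The small cases $n\le 3$ are settled by the same reasoning, the graphs being then a common triangle or a common star; thus $n=4$ is the only possible obstruction.

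The case $n=4$ I would treat separately, and I expect it to be the main obstacle, since it is exactly where the clean ``star'' structure breaks down. If either $G_0$ or $G_1$ is a spanning star $K_{1,3}$, the centre-coincidence argument again yields a common edge and $\vect{y}$ exists. Otherwise both graphs contain a perfect matching, and a short finite analysis — combining cross-intersection, the covering property, and the observation that any Boolean point left undetermined by the extremal points already located would itself become a new weight-$2$ extremal point — pins the configuration down so that one of $G_0,G_1$ is a perfect matching and the other is the remaining four edges. The corresponding functions, whose representatives are $x_1x_2\orl x_3x_4$ (minimal ones $1100,0011$; maximal zeros $1010,0101,1001,0110$) and its dual $(x_1\orl x_3)\andl(x_2\orl x_4)$, have in every case exactly $6$ extremal points, which is the asserted conclusion.
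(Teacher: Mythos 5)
Your proposal is correct, and it shares the paper's skeleton: the same encoding of the weight-$2$ extremal points as graphs $G_0,G_1$ on $[n]$ via Claim~\ref{cl:com_var}, the same reformulation of the statement as ``$G_0$ and $G_1$ have a common edge'', and the same positivity argument showing that no edge of $G_1$ can be disjoint from an edge of $G_0$ (the paper phrases this asymmetrically as property $(*)$: every edge of $G_0$ is a vertex cover of $G_1$). Where you genuinely differ is in how the combinatorics is finished. For $n\ge 5$ the paper runs a bespoke contradiction chase (an edge $ij$ of $G_0$, two edges $ip,iq$ of $G_1$ through $i$, an edge $ps$ of $G_0$, and a fifth vertex $t$), whereas you use a structural classification: two disjoint edges in either graph force every edge of the other graph into a fixed $4$-set and hence $n=4$; otherwise both graphs are intersecting families, hence stars or triangles, and since they cover all $n\ge 5$ vertices they must be full stars $K_{1,n-1}$, whose centres must coincide by cross-intersection. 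This buys a uniform treatment of $n\le 3$ and $n\ge 5$ and makes it conceptually transparent why $n=4$ is the unique exception. In the exceptional case you go further than the paper: the paper merely identifies the core configuration ($G_0\supseteq\{12,34\}$, $G_1\supseteq\{13,24\}$ up to renaming) and observes that the two points left undetermined by positivity, $(0,1,1,0)$ and $(1,0,0,1)$, are extremal whichever values they take, which already gives $6$ extremal points without classifying $f$; your finite analysis instead classifies the exceptional functions completely (perfect matching versus complementary $4$-cycle, i.e.\ $x_1x_2\vee x_3x_4$ and its dual, up to renaming variables). That extra precision is correct but not needed for the claim, which is where the paper's shortcut saves work; both routes are sound.
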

\begin{proof}
Consider a graph $G_0$ (\textit{resp.} $G_1$) with vertex set $[n]$
every edge $ij$ of which represents a maximal zero (\textit{resp.} minimal one) 
that contains $0$'s (\textit{resp.} $1$'s) in exactly two coordinates $i$ and $j$. 
By Claim~\ref{cl:com_var}, 
every vertex in $G_0$ is covered by an edge and
every vertex in $G_1$ is covered by an edge.
From this it follows in particular that each graph $G_0$, $G_1$ has at least $\left\lceil n/2\right\rceil$ edges.

%Similarly, we construct a graph $G_1$ with vertex set $[n]$
%every edge $ij$ of which represents a minimal one 
%that contains $1$'s in exactly two coordinates $i$ and $j$. 
%By (b) of Claim~\ref{cl:com_var}, every vertex in $G_1$ is covered by an edge. 

In terms of the graphs $G_0$ and $G_1$, the claim is equivalent to saying that $G_0$ and $G_1$ have a common edge.
It is not difficult to see that for $n\le 3$ the graphs $G_1$ and $G_0$ necessarily have a common edge. 
Let us show that this is also the case for $n\ge 5$. 
   
Assume that $G_0$ and $G_1$ have no common edges, i.e. every edge of $G_0$ is a non-edge (a pair of non-adjacent vertices) in $G_1$.
Let us prove that
\begin{itemize}
\item[(*)] every edge $ij$ of $G_0$ forms a vertex cover in $G_1$, i.e. every edge of $G_1$ shares a vertex with either $i$ or $j$ (and not with both according to our assumption). 
\end{itemize}
Indeed, let $ij$ be an edge of $G_0$ and assume that $G_1$ contains an edge $pq$ such that $p$ is different from $i,j$ and $q$ is different from $i,j$.
Then the minimal one corresponding to the edge $pq$ of $G_1$ is below the maximal zero corresponding to the  edge $ij$ of $G_0$. 
This contradicts the positivity of $f$ and proves (*).

Consider an edge $ij$ in $G_0$.
Since $n\ge 5$, then $G_0$ has at least $3$ edges, hence from (*) we get that at least one of $i,j$ 
covers at least two edges of $G_1$, say $i$ covers $ip$ and $iq$. 
Let $ps$ be an edge of $G_0$ covering $p$.
If $s\ne q$, then $ps$ does not cover the edge $iq$ of $G_1$ which contradicts to (*).
If $s=q$, let $t$ be any vertex different from $i,j,p,q$.
The vertex $t$ must be covered by some edge $tr$ in $G_1$.
If $r$ is different from $i,j$ then $tr$ does not cover $ij$ in $G_0$.
If $r$ is different from $p,q$ then $tr$ does not cover $pq$ in $G_0$.
In both cases we get a contradiction to (*), 
hence for $n\ge 5$ the graphs $G_0$ and $G_1$ necessarily have a common edge and hence the result follows in this case.

\medskip
It remains to analyze the case $n=4$. 
Up to renaming variables, 
the only possibility for $G_0$ and $G_1$ to avoid a common edge is for $G_0$ to have edges $12$ and $34$ and 
for $G_1$ to have edges $13$ and $24$.
In other words, $(0,0,1,1)$ and $(1,1,0,0)$ are maximal zeros and $(1,0,1,0)$ and $(0,1,0,1)$ are minimal ones. 
By positivity, this completely defines the function $f$,
except for two points $(0,1,1,0)$ and $(1,0,0,1)$. 
However, regardless of the value of $f$ in these points, both of them are extremal and hence $f$ has $6$ extremal points.
\end{proof}

\begin{claim}\label{cl:non-split} Let $f = f(x_1, \ldots, x_n)$ be a positive non-canalyzing function 
	such that for each variable $x_i$ both restrictions $f_{|x_i=0}$ and $f_{|x_i=1}$ are canalyzing.
Let $\vect{\textup{y}}$ be a minimal one of Hamming weight $2$ such that $\overline{\vect{\textup{y}}}$ is a maximal zero. 
Denote the two coordinates of $\vect{\textup{y}}$ containing $1$'s by $i$ and $s$, and 
let $f_0 = f_{|x_i=0}$ and $f_1 = f_{|x_i=1}$.
	\begin{enumerate}
		\item[(a)] Variable $x_s$ is relevant for
		both functions $f_0$ and $f_1$.
		
		\item[(b)] %Every extremal point for $f_0$ or $f_1$ is extremal for $f$.
		If a point 
		$\vect{\textup{a}} = (\alpha_1, \ldots, \alpha_{i-1}, \alpha_{i+1}, \ldots, \alpha_{n}) \in B^{n-1}$ 
		is an extremal point of $f_{\alpha_i}$, $\alpha_i \in \{ 0, 1\}$,
		then 
		$\vect{\textup{a}}' = (\alpha_1, \ldots, \alpha_{i-1}, \alpha_i, \alpha_{i+1}, \ldots, \alpha_{n-1}) \in B^n$
		is an extremal point of $f$.
%		
%		\item[(c)] $f$ has at  least $m+2$ extremal points, where $m$ is the number
%		of relevant variables of $f$.
	\end{enumerate}
\end{claim}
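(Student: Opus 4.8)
The plan is to first distil the two hypotheses on $\vect{y}$ and $\overline{\vect{y}}$ into a pair of canalyzing identities for the restrictions $f_0$ and $f_1$, and then to use these identities to lift extremal points of $f_0$ and $f_1$ to extremal points of $f$. I would begin by establishing
\[
	f_{|x_i=0,\,x_s=0} \equiv \vect{0}
	\qquad\text{and}\qquad
	f_{|x_i=1,\,x_s=1} \equiv \vect{1}.
\]
Both are consequences of positivity alone. Since $\overline{\vect{y}}$ carries $0$'s exactly in coordinates $i$ and $s$ and $1$'s elsewhere, every point with $0$'s in coordinates $i,s$ lies $\below\overline{\vect{y}}$; as the zeros of a positive function form a down-set and $\overline{\vect{y}}$ is a zero, all such points are zeros, which is the first identity. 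Dually, every point with $1$'s in coordinates $i,s$ lies above $\vect{y}$, and since the ones form an up-set containing $\vect{y}$, all such points are ones, giving the second. Restated for the restrictions, ${f_0}_{|x_s=0}\equiv\vect{0}$ and ${f_1}_{|x_s=1}\equiv\vect{1}$.

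Part (a) then follows at once. Because $f$ is non-canalyzing, neither $f_0$ nor $f_1$ is constant; combined with ${f_0}_{|x_s=0}\equiv\vect{0}$ this forces ${f_0}_{|x_s=1}\not\equiv\vect{0}$, so $x_s$ is relevant for $f_0$, and combined with ${f_1}_{|x_s=1}\equiv\vect{1}$ it forces ${f_1}_{|x_s=0}\not\equiv\vect{1}$, so $x_s$ is relevant for $f_1$.

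For part (b) I would split into four cases according to $\alpha_i\in\{0,1\}$ and to whether $\vect{a}$ is a minimal one or a maximal zero, noting first that $f(\vect{a}')=f_{\alpha_i}(\vect{a})$, so $\vect{a}'$ always carries the correct value. Two cases are immediate, because coordinate $i$ is already pinned at the extreme value for the relevant direction. If $\alpha_i=1$ and $\vect{a}$ is a maximal zero, then every $\vect{b}$ with $\vect{a}'\sbelow\vect{b}$ still has $b_i=1$, hence projects to a point strictly above $\vect{a}$ in the domain of $f_1$, which is a one of $f_1$; thus $\vect{a}'$ is a maximal zero of $f$. Symmetrically, if $\alpha_i=0$ and $\vect{a}$ is a minimal one, every $\vect{b}\sbelow\vect{a}'$ keeps $b_i=0$ and projects below $\vect{a}$, yielding a zero of $f_0$, so $\vect{a}'$ is a minimal one of $f$.

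The remaining two cases are the crux, and the canalyzing identities are exactly what resolves them. Take $\alpha_i=1$ with $\vect{a}$ a minimal one of $f_1$, and consider any $\vect{b}\sbelow\vect{a}'$. If $b_i=1$, the point projects strictly below $\vect{a}$ and is a zero by minimality. If $b_i=0$ and the $x_s$-coordinate of $\vect{b}$ is $0$, then $f_{|x_i=0,\,x_s=0}\equiv\vect{0}$ gives $f(\vect{b})=0$. The only remaining possibility, $b_i=0$ with $x_s$-coordinate $1$, forces $a_s=1$; but ${f_1}_{|x_s=1}\equiv\vect{1}$ makes the point of $B^{n-1}$ whose only $1$ sits in the $x_s$-coordinate the unique minimal one of $f_1$ with $x_s=1$, so $\vect{a}$ equals it and $\vect{a}'=\vect{y}$, which is a minimal one of $f$ by hypothesis. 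The case $\alpha_i=0$ with $\vect{a}$ a maximal zero is the mirror image, using $f_{|x_i=1,\,x_s=1}\equiv\vect{1}$ and the role of $\overline{\vect{y}}$. I expect this last bookkeeping to be the main obstacle: one must ensure that a perturbation moving coordinate $i$ against its pinned value always lands in a sub-hypercube governed by a canalyzing identity, with the single exceptional direction absorbed precisely by the extremality of $\vect{y}$ and $\overline{\vect{y}}$.
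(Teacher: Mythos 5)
Your proof is correct and follows essentially the same route as the paper's: derive the canalyzing identities ${f_0}_{|x_s=0}\equiv\vect{0}$ and ${f_1}_{|x_s=1}\equiv\vect{1}$ from positivity of $f$ together with the hypotheses on $\vect{y}$ and $\overline{\vect{y}}$, use them to get relevance of $x_s$ in part (a), and lift extremal points in part (b) by a pinned-coordinate argument, with the exceptional configurations absorbed by identifying $\vect{a}'$ with $\vect{y}$ (or $\overline{\vect{y}}$). The only difference is cosmetic: the paper splits the $\alpha_i=1$ case on the value of $\alpha_s$ first, whereas you split on the type of extremal point and handle the $\alpha_s=1$ exception inline, and both treat the remaining value of $\alpha_i$ by symmetry.
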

\begin{proof}
First, we note that since $\vect{y}$ is a minimal one, ${f_1}_{|x_s=1} \equiv \vect{1}$.
Similarly, since $\overline{\vect{y}}$ is a maximal zero,  ${f_0}_{|x_s=0} \equiv \vect{0}$.

To prove (a), suppose to the contrary that $f_0$ does not depend on $x_s$. 
Then ${f_0}_{|x_s=1} \equiv {f_0}_{|x_s=0} \equiv \vect{0}$, and therefore
$f_0 \equiv \vect{0}$, which contradicts the assumption that $f$ is non-canalyzing.
Similarly, one can show that $x_s$ is relevant for $f_1$.
		
%		 Suppose to the contrary that $f_1$ does not depend on $x_s$. Then
%		${f_1}_{|x_s=0} \equiv {f_1}_{|x_s=1} \equiv \vect{1}$, where the latter equality follows from the
%		expansion $f_1 = x_s \orl f_1'$. This implies that $f_1 \equiv 1$, and therefore
%		$$
%			f = (x_i \andl f_{1}) \orl (\notl{x_i} \andl f_{0}) = x_i \orl f_{|x_i=0},
%		$$
%		which contradicts Claim \ref{cl:nlro-decomp}. 
		
Now we turn to (b) and prove the statement for $\alpha_i = 1$. For $\alpha_i = 0$ the arguments are symmetric.

Assume first that $\alpha_s=1$. 
Since $\vect{y}$ is a minimal one,  we have $f_1(\vect{b})=1$ for all $\vect{b}=(\beta_1,\dots,\beta_{i-1},\beta_{i+1},\dots,\beta_n)$
with $\beta_s = 1$. Due to the extremality of ${\bf a}$, all its components besides $\alpha_s$
are zeros. It follows that $\vect{a}'=\vect{y}$, which is a minimal one by assumption.

It remains to assume that $\alpha_s=0$.
Let $\vect{a}$ be a maximal zero for the function $f_1$. 
If $\vect{a}'$ is not a maximal zero for $f$, then there is $\vect{a}''\succ\vect{a}'$ with $f(\vect{a}'')=0$. 
Since $\vect{a}''\succ\vect{a}'$ and $\alpha_i = 1$, the $i$-th component of $\vect{a}''$ is $1$. 
By its removal, we obtain a zero of $f_1$ that is strictly above $\vect{a}$ in contradiction to the minimality of the latter.

Let $\vect{a}$ be a minimal one for the function $f_1$. 
If $\vect{a}'$ is not a minimal one for $f$, then there is $\vect{a}''\prec\vect{a}'$ with $f(\vect{a}'')=1$.
The $i$-th component of $\vect{a}''$ must be $0$, since otherwise by its removal we obtain a one for $f_1$ strictly below $\vect{a}$.
Also, the $s$-th component of $\vect{a}''$ must be $0$, since this component equals $0$ in $\vect{a}$. 
But then $\vect{a}'' \preceq \overline{\vect{y}}$ with $f(\vect{a}'')=1$ and $f(\overline{\vect{y}})=0$, a contradiction.
\end{proof}

\begin{lemma}\label{lem:non-split-split-projection}
	Let $f = f(x_1, \ldots, x_n)$ be a positive non-canalyzing function 
	such that for each variable $x_i$ both restrictions $f_{|x_i=0}$ and $f_{|x_i=1}$ are canalyzing.
	Then the number of extremal points of $f$ is at least $n+2$.
\end{lemma}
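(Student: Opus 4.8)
The plan is to reduce the problem to the two restrictions $f_0 = f_{|x_i=0}$ and $f_1 = f_{|x_i=1}$ supplied by the preceding claims, and to apply the induction hypothesis (Theorem~\ref{th:extremal_main} for functions of at most $n-1$ variables) to each of them. First I would dispose of the exceptional case $n=4$: here Claim~\ref{cl:VL} already tells us that $f$ has exactly $6 = n+2$ extremal points, so there is nothing more to prove. For $n \neq 4$, Claim~\ref{cl:VL} provides a minimal one $\vect{y}$ of Hamming weight $2$ whose complement $\notl{\vect{y}}$ is a maximal zero; let $i$ and $s$ be the two coordinates in which $\vect{y}$ equals $1$, and set $f_0 = f_{|x_i=0}$ and $f_1 = f_{|x_i=1}$.

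The key inequality I would establish is $r(f) \geq r(f_0) + r(f_1)$. This follows directly from Claim~\ref{cl:non-split}(b): the lift $\vect{a} \mapsto \vect{a}'$ that reinserts $\alpha_i$ into coordinate $i$ sends each extremal point of $f_{\alpha_i}$ to an extremal point of $f$, and this map is injective. Moreover, the images coming from $f_0$ all carry a $0$ in coordinate $i$ while those coming from $f_1$ all carry a $1$ there, so the two families of extremal points of $f$ are disjoint; adding them gives the claimed lower bound.

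It remains to bound $r(f_0) + r(f_1)$ from below. Applying the induction hypothesis to the positive functions $f_0$ and $f_1$ on $B^{n-1}$ gives $r(f_0) \geq k_0 + 1$ and $r(f_1) \geq k_1 + 1$, where $k_0$ and $k_1$ denote their numbers of relevant variables. Hence $r(f) \geq k_0 + k_1 + 2$, and it suffices to show $k_0 + k_1 \geq n$. By Claim~\ref{cl:all_var_are_rel} every variable of $f$ is relevant; a routine argument shows that a variable irrelevant for both $f_0$ and $f_1$ would be irrelevant for $f$, so each of the $n-1$ variables $x_j$ with $j \neq i$ is relevant for at least one of $f_0, f_1$. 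This alone yields $|R_0 \cup R_1| \geq n-1$, where $R_0, R_1$ are the respective sets of relevant variables. The extra unit comes from Claim~\ref{cl:non-split}(a), which guarantees that $x_s$ lies in $R_0 \cap R_1$; since $k_0 + k_1 = |R_0 \cup R_1| + |R_0 \cap R_1|$, we obtain $k_0 + k_1 \geq (n-1) + 1 = n$ and therefore $r(f) \geq n+2$.

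The main obstacle, and the reason the earlier claims were set up so carefully, is precisely getting $k_0 + k_1 \geq n$ rather than merely $n-1$: the naive union bound on relevant variables loses exactly one unit, and recovering it requires a single variable relevant for \emph{both} restrictions. The weight-$2$ minimal one $\vect{y}$ together with its complementary maximal zero is exactly what forces this doubly-counted variable $x_s$ to exist, which is why the combinatorial analysis in Claim~\ref{cl:VL} (and the separate handling of $n=4$) is indispensable.
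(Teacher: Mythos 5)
Your proof is correct and takes essentially the same approach as the paper's: Claim~\ref{cl:VL} supplies the weight-$2$ minimal one (with the $n=4$ case settled by its exceptional conclusion of $6=n+2$ extremal points), Claim~\ref{cl:non-split}(b) gives $r(f)\geq r(f_0)+r(f_1)$, and the relevant-variable count via Claim~\ref{cl:all_var_are_rel} together with $x_s\in R_0\cap R_1$ from Claim~\ref{cl:non-split}(a) yields $k_0+k_1\geq n$. The only cosmetic difference is that you invoke the induction hypothesis directly to bound $r(f_0)$ and $r(f_1)$, whereas the paper cites Lemma~\ref{lem:split}; since $f_0$ and $f_1$ are canalyzing, these give the identical bound.
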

\begin{proof}
By Claim~\ref{cl:VL} we may assume that there is a minimal one $\vect{y}$ 
that contains $1$'s in exactly two coordinates, say $i$ and $s$, such that $\overline{\vect{y}}$ is a maximal zero.
Denote $f_0 = f_{|x_i=0}$ and $f_1 = f_{|x_i=1}$.

	Let $P$, $P_0$, and $P_1$ be the sets of relevant variables of $f, f_0$, and $f_1$,
	respectively. By Claim~\ref{cl:all_var_are_rel}, $P$ is the set of all variables. 
	Since any relevant variable of $f$ is relevant for at least one 
	of the functions $f_0$, $f_1$ and, by Claim \ref{cl:non-split} (a), $x_s$ is a relevant 
	variable of both of them, we have
	$$
		n = |P| \leq |P_0 \cup P_1| + 1 = |P_0| + |P_1| - |P_0 \cap P_1| + 1 \leq |P_0| + |P_1|.
	$$
	%As both $f_0$ and $f_0$ are split functions, by Lemma ???
	By Lemma~\ref{lem:split}, $r(f_0) \geq |P_0|+1$, $r(f_1) \geq |P_1|+1$. 
	Finally, by Claim \ref{cl:non-split} (b) the number $r(f)$ of 
	extremal points of $f$ is at least $r(f_0) + r(f_1) \geq |P_0| + |P_1| + 2 \geq n + 2$.
\end{proof}

%%%%%%%%%%%%%%%%%%%%%%%%%%%%%%%%%%%%%%%%%%%%%%%%%
%%%%%%%%%%%%%%%%%%%%%%%%%%%%%%%%%%%%%%%%%%%%%%%%%
\subsection{Non-canalyzing functions containing a non-canalyzing restriction}
\label{sec:ns2}
%%%%%%%%%%%%%%%%%%%%%%%%%%%%%%%%%%%%%%%%%%%%%%%%%
%%%%%%%%%%%%%%%%%%%%%%%%%%%%%%%%%%%%%%%%%%%%%%%%%

Due to Lemmas \ref{lem:split} and \ref{lem:non-split-split-projection}  
it remains to show the bound for a positive non-canalyzing function $f = f(x_1, \ldots, x_n)$ 
such that for some $i \in [n]$ at least one of $f_0 = f_{|x_i=0}$ and $f_1 = f_{|x_i=1}$ is non-canalyzing.
Let $k$ be the number of relevant variables of $f$ and let us prove that the number of extremal points of $f$ is at least $k+2$.

%Since irrelevant variables do not affect the number of extremal points, we assume without
%loss of generality that all $x_1, \ldots, x_n$ are relevant for $f$. 

Consider two possible cases:
\begin{enumerate}
	\item[(a)] $x_i$ is a irrelevant variable of $f$;
	\item[(b)] $x_i$ is a relevant variable of $f$.
\end{enumerate}

In case (a) the function $f_{|x_i=0} \equiv f_{|x_i=1}$ is non-canalyzing and has the same number of extremal points and 
the same number of relevant variables as $f$. By induction, the number of extremal points of $f$ is at least $k+2$.

Now let us consider case (b).
Assume without loss of generality that $i=n$, and
let $f_0 = f_{|x_n=0}$ and $f_1 = f_{|x_n=1}$. 
We assume that $f_0$ is non-canalyzing and prove that $f$ has at least $k+2$ extremal points, where
$k$ is the number of relevant variables of $f$.
The case when $f_0$ is canalyzing, but $f_1$ is non-canalyzing is proved similarly.

Let us denote the number of relevant variables of $f_0$ by $m$. Clearly, $1 \leq m \leq k-1$.
Exactly $k-1-m$ of $k$ relevant variables of $f$ are irrelevant for the function $f_0$.
Note that these $k-1-m$ variables are necessarily relevant for the function $f_1$.
By the induction hypothesis, the number $r(f_0)$ of extremal points of $f_0$ is at least $m+2$.

%For a positive function $f$ we denote by $Z^f$ and $U^f$ the set of maximal zeros
%and the set of minimal ones of $f$, respectively.
We introduce the following notation:
\begin{enumerate}
%	\item[$Z^f$] --  the set of maximal zeros of $f$;
%	\item[$U^f$] -- the set of minimal ones of $f$;
	\item[$C_0$] -- the set of maximal zeros of $f$ corresponding to $x_n$;
	\item[$P_0$] -- the set of all other maximal zeros of $f$, i.e., $P_0 = Z^f \setminus C_0$;
	\item[$C_1$] -- the set of minimal ones of $f$ corresponding to $x_n$;
	\item[$P_1$] -- the set of all other minimal ones of $f$, i.e., $P_1 = U^f \setminus C_1$.
\end{enumerate}

For a set $A \subseteq B^n$ we will denote by $A^*$ the restriction of $A$ to the first $n-1$
coordinates, i.e., 
$A^* = \{ (\alpha_1, \ldots, \alpha_{n-1})~|~(\alpha_1, \ldots, \alpha_{n-1},\alpha_{n}) \in A \text{ for some }
\alpha_n \in \{0,1\} \}$.

By definition, the number of extremal points of $f$ is 
\begin{equation}\label{eq:r_f}
	r(f) = |C_0| + |P_1| + |C_1| + |P_0| = |C_0^*| + |P_1^*| + |C_1^*| + |P_0^*|.
\end{equation}

We want to express $r(f)$ in terms of the number of extremal points of $f_0$ and $f_1$. For this
we need several observations. First, we observe that if $(\alpha_1, \ldots, \alpha_{n-1}, \alpha_n)$ is an extremal point 
 for $f$, the point $(\alpha_1, \ldots, \alpha_{n-1})$
is extremal for $f_{\alpha_n}$. Furthermore, we have the following straightforward claim.

\begin{claim}\label{cl:p1_p0}
	$P_1^*$ is the set of minimal ones of $f_0$ and $P_0^*$ is the set of maximal zeros of $f_1$.
\end{claim}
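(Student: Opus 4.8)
The plan is to establish the two stated set equalities, $P_1^* = U^{f_0}$ and $P_0^* = Z^{f_1}$, by proving each inclusion directly from the definitions of minimal ones and maximal zeros. For $\vect{a} = (\alpha_1, \ldots, \alpha_{n-1}) \in B^{n-1}$ and $\alpha_n \in \{0,1\}$, write $(\vect{a}, \alpha_n)$ for the point of $B^n$ whose first $n-1$ coordinates are those of $\vect{a}$ and whose $n$-th coordinate is $\alpha_n$; the defining identity $f(\vect{a}, \alpha_n) = f_{\alpha_n}(\vect{a})$ is what links the two levels. The key structural observation, coming from the definitions of $C_0$ and $C_1$, is that every point of $P_1$ has $n$-th coordinate $0$ and every point of $P_0$ has $n$-th coordinate $1$; hence the projection $(\,\cdot\,)^*$ restricted to each of these sets is a bijection, and it suffices to prove the set equalities.

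For $P_1^* = U^{f_0}$ I would argue both inclusions. If $\vect{a} \in P_1^*$, then $(\vect{a},0)$ is a minimal one of $f$ and $f_0(\vect{a}) = f(\vect{a},0) = 1$; any $\vect{a}' \sbelow \vect{a}$ with $f_0(\vect{a}') = 1$ would yield $(\vect{a}',0) \sbelow (\vect{a},0)$ with $f(\vect{a}',0) = 1$, contradicting minimality, so $\vect{a} \in U^{f_0}$. Conversely, let $\vect{a} \in U^{f_0}$, so $f(\vect{a},0) = 1$. If $(\vect{a},0)$ were not a minimal one of $f$, there would be a point $\vect{c} \sbelow (\vect{a},0)$ with $f(\vect{c}) = 1$; since the $n$-th coordinate of $(\vect{a},0)$ is already $0$, the $n$-th coordinate of $\vect{c}$ must also be $0$, so $\vect{c} = (\vect{b},0)$ with $\vect{b} \sbelow \vect{a}$ and $f_0(\vect{b}) = 1$, contradicting $\vect{a} \in U^{f_0}$. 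Thus $(\vect{a},0)$ is a minimal one of $f$ with $n$-th coordinate $0$, i.e. it lies in $P_1$, whence $\vect{a} \in P_1^*$.

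The equality $P_0^* = Z^{f_1}$ follows by the dual argument, interchanging ones with zeros, $\sbelow$ with $\succ$, and the fixed last coordinate $0$ with $1$: a point of $P_0^*$ lifts to a maximal zero $(\vect{a},1)$ of $f$, and a maximal zero $\vect{a}$ of $f_1$ lifts to a maximal zero $(\vect{a},1)$ whose $n$-th coordinate, being already $1$, cannot increase, so any witnessing zero above it stays at the level $x_n = 1$ and projects to a zero of $f_1$ strictly above $\vect{a}$. There is no substantial obstacle in this claim; the only point that requires care---and the one that makes the correspondence an exact equality rather than a mere inclusion---is the observation that moving strictly below a point with last coordinate $0$ (respectively, strictly above a point with last coordinate $1$) cannot alter that coordinate, which confines every witnessing point to the appropriate restriction.
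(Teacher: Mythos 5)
Your proof is correct and fills in exactly the routine verification the paper omits: the paper states this claim without proof, calling it ``straightforward.'' Your key observation---that moving strictly below a point whose last coordinate is $0$ (resp.\ strictly above a point whose last coordinate is $1$) cannot change that coordinate, so all witnessing points stay within the corresponding restriction---is precisely what makes the claim immediate, and both inclusions are argued correctly.
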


In contrast to the minimal ones of $f_0$, the set of maximal zeros of $f_0$ in addition to the points in $C_0^*$
may contain extra points, which we denote by $N_0^*$. In other words, $Z^{f_0} = C_0^* \cup N_0^*$.
Similarly, besides $C_1^*$, the set of minimal ones of $f_1$ may contain additional points, which we denote
by $N_1^*$. That is, $U^{f_1} = C_1^* \cup N_1^*$.

\begin{claim}\label{cl:n0_p0}
	The set $N_0^*$ is a subset of the set $P_0^*$ of maximal zeros of $f_1$.
	The set $N_1^*$ is a subset of the set $P_1^*$ of minimal ones of $f_0$.
\end{claim}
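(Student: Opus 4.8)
The plan is to prove both inclusions by the same dual argument, so I would concentrate on $N_0^* \subseteq P_0^*$ and obtain $N_1^* \subseteq P_1^*$ by interchanging the roles of ones and zeros with the order reversed. The starting point is Claim~\ref{cl:p1_p0}, by which $P_0^*$ is exactly the set $Z^{f_1}$ of maximal zeros of $f_1$ (and, dually, $P_1^* = U^{f_0}$); so the goal is to show that every point of $N_0^*$ is in fact a maximal zero of $f_1$. Throughout I would use the elementary fact that restrictions of a positive function are positive, so that $f_0$ and $f_1$ are both positive.

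Fix $\vect{a}\in B^{n-1}$ with $\vect{a}\in N_0^*$; by definition $\vect{a}$ is a maximal zero of $f_0$, while its extension $(\vect{a},0)$ is \emph{not} a maximal zero of $f$. First I would exploit this failure of maximality: since $(\vect{a},0)$ is a zero of $f$ that is not maximal, there is a point $\vect{b}\in B^n$ with $(\vect{a},0)\sbelow\vect{b}$ and $f(\vect{b})=0$, and the case analysis on the last coordinate of $\vect{b}$ is the crux. If that coordinate were $0$, then $\vect{b}=(\vect{b}',0)$ with $\vect{a}\sbelow\vect{b}'$ and $f_0(\vect{b}')=0$, contradicting the maximality of $\vect{a}$ as a zero of $f_0$. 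Hence $\vect{b}=(\vect{b}',1)$ with $\vect{a}\below\vect{b}'$ and $f_1(\vect{b}')=0$, and positivity of $f_1$ then forces $f_1(\vect{a})=0$, so $\vect{a}$ is a zero of $f_1$. To upgrade this to \emph{maximality}, I would invoke positivity of $f$: for any $\vect{c}\in B^{n-1}$ with $\vect{a}\sbelow\vect{c}$, maximality of $\vect{a}$ as a zero of $f_0$ gives $f_0(\vect{c})=1$, i.e. $f(\vect{c},0)=1$; since $(\vect{c},0)\below(\vect{c},1)$, positivity yields $f(\vect{c},1)=1$, that is $f_1(\vect{c})=1$. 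Thus no zero of $f_1$ lies strictly above $\vect{a}$, so $\vect{a}$ is a maximal zero of $f_1$ and $\vect{a}\in Z^{f_1}=P_0^*$, proving the first inclusion. The inclusion $N_1^*\subseteq P_1^*=U^{f_0}$ follows by the mirror-image argument, exchanging $f_0$ with $f_1$ and maximal zeros with minimal ones.

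I expect the main obstacle to be exactly the two-step structure of the key point: it is quick to see that a point of $N_0^*$ is \emph{a} zero of $f_1$, but the substance of the claim is that it is a \emph{maximal} one. The care needed is in invoking the maximality of $\vect{a}$ in $f_0$ twice — once to exclude last coordinate $0$ for the witness $\vect{b}$ (yielding that $\vect{a}$ is a zero of $f_1$), and once to rule out any zero of $f_1$ strictly above $\vect{a}$ (yielding maximality) — while keeping the two distinct uses of positivity, of $f_1$ and of $f$, cleanly separated.
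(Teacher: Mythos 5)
Your proof is correct and takes essentially the same route as the paper's: both arguments first establish $f_1(\vect{a})=0$ by exploiting the fact that $(\vect{a},0)$ is not a maximal zero of $f$ (the defining property of $N_0^*$), and then obtain maximality in $f_1$ by combining the maximality of $\vect{a}$ as a zero of $f_0$ with the positivity of $f$, identifying $P_0^*$ with $Z^{f_1}$ via Claim~\ref{cl:p1_p0}. The only difference is presentational — you argue directly, while the paper phrases the same two steps as a proof by contradiction.
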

\begin{proof}
We will prove the first part of the statement, the second one is proved similarly.
Suppose to the contrary that there exists a point 
$\vect{a} = (\alpha_1, \ldots, \alpha_{n-1}) \in N_0^* \setminus P_0^*$, which is a maximal zero for
$f_0$, but is not a maximal zero for $f_1$.
Notice that $f_1(\vect{a}) = 0$, as otherwise $(\alpha_1, \ldots, \alpha_{n-1}, 0)$ would be a maximal zero for
$f$, which is not the case, since $\vect{a} \notin C_0^*$.
Since $\vect{a}$ is not a maximal zero for $f_1$, there exists a maximal zero $\vect{b} \in B^{n-1}$
for $f_1$ such that $\vect{a} \sbelow \vect{b}$.
But then we have $f_0(\vect{b}) = 1$ and $f_1(\vect{b}) = 0$, which contradicts the positivity
of function $f$.
%Suppose the claim is not true and the set $N_0^* \setminus P_0^*$ is not empty.
%Consider $\vect{x} \in N_0^* \setminus P_0^*$ and note that $f_1(\vect{x}) = 0$ otherwise $\vect{x}$ would be a maximal zero for $f$ corresponding to $x_n$.
%Hence, $\vect{x}$ is a non-maximal zero for $f_1$ and there exists a maximal zero $\vect{y}$ for $f_1$ such that $\vect{x} \below \vect{y}$.
%Then we have $f_0(\vect{y}) = 1$ and $f_1(\vect{y}) = 0$ which is impossible for the positive function $f$.
\end{proof}

From Claim \ref{cl:p1_p0} we have 
%that the number $r(f_0)$ of extremal points of $f_0$
$r(f_0) = |Z^{f_0} \cup U^{f_0}| = |C_0^*| + |N_0^*| + |P_1^*|$, which together with (\ref{eq:r_f}) 
and Claim \ref{cl:n0_p0} imply

\begin{equation}\label{eq:rf}
\begin{split}
	r(f) & = |C_0^*| + |P_1^*| + |C_1^*| + |P_0^*| = |C_0^*| + |P_1^*| + |C_1^*|  + |N_0^*| + 
	|P_0^* \setminus N_0^*| \\ 
	& = r(f_0) + |C_1^*| + |P_0^* \setminus N_0^*|.
\end{split}
\end{equation}

Using the induction hypothesis we conclude that $r(f) \geq m+2 + |C_1^*| + |P_0^* \setminus N_0^*|$.
To derive the desired bound $r(f) \geq k+2$, in the rest of this section we show that 
$C_1^* \cup P_0^* \setminus N_0^*$ contains at least $k-m$ points.

\begin{claim}\label{cl:proper_pairs}
	Let $x_i$, $i \in [n-1]$, be a relevant variable for $f_1$, which is irrelevant for $f_0$.
	Then every maximal zero for $f_1$ corresponding to $x_i$ belongs to $P_0^* \setminus N_0^*$ 
and every minimal one for $f_1$ corresponding to $x_i$ belongs to $C_1^*$.
\end{claim}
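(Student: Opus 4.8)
The plan is to prove both parts by the same mechanism, exploiting the single hypothesis that $x_i$ is irrelevant for $f_0$: since $f_0$ does not depend on $x_i$, flipping the $i$-th coordinate of any point preserves the value of $f_0$, so a zero of $f_0$ can never be a maximal zero in the direction where its $i$-th coordinate is $0$, and a one of $f_0$ can never be a minimal one where its $i$-th coordinate is $1$. Combined with the set relations already established in Claims~\ref{cl:p1_p0} and~\ref{cl:n0_p0}, this will immediately place the relevant extremal points of $f_1$ into $P_0^* \setminus N_0^*$ and $C_1^*$, respectively. Note that relevance of $x_i$ for $f_1$ is not used inside the argument; it serves only to guarantee that such extremal points of $f_1$ exist, which is what makes the claim useful in the ensuing counting of $C_1^* \cup (P_0^* \setminus N_0^*)$.

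For part (a), I would fix an arbitrary maximal zero $\vect{a}$ of $f_1$ with $(\vect{a})_i = 0$. By Claim~\ref{cl:p1_p0}, $P_0^* = Z^{f_1}$, so $\vect{a} \in P_0^*$ is automatic, and it only remains to show $\vect{a} \notin N_0^*$. Since $N_0^* \subseteq Z^{f_0}$ by the defining relation $Z^{f_0} = C_0^* \cup N_0^*$, it suffices to verify that $\vect{a}$ is not a maximal zero of $f_0$. Positivity of $f$ gives $f_0(\vect{a}) \leq f_1(\vect{a}) = 0$, so $\vect{a}$ is a zero of $f_0$; but flipping its $i$-th coordinate from $0$ to $1$ yields a strictly larger point on which $f_0$ takes the same value $0$ (as $x_i$ is irrelevant for $f_0$), witnessing that $\vect{a}$ is not maximal. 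Hence $\vect{a} \notin N_0^*$ and $\vect{a} \in P_0^* \setminus N_0^*$.

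For part (b), I would fix an arbitrary minimal one $\vect{a}$ of $f_1$ with $(\vect{a})_i = 1$. We have $\vect{a} \in U^{f_1} = C_1^* \cup N_1^*$, so it suffices to rule out $\vect{a} \in N_1^*$. By Claim~\ref{cl:n0_p0} together with Claim~\ref{cl:p1_p0}, $N_1^* \subseteq P_1^* = U^{f_0}$, so I would show $\vect{a}$ is not a minimal one of $f_0$. If $f_0(\vect{a}) = 0$ this is immediate; otherwise $f_0(\vect{a}) = 1$, and flipping the $i$-th coordinate of $\vect{a}$ from $1$ to $0$ produces a strictly smaller point on which $f_0$ still evaluates to $1$ (again since $x_i$ is irrelevant for $f_0$), so $\vect{a}$ is not minimal. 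Either way $\vect{a} \notin U^{f_0} \supseteq N_1^*$, whence $\vect{a} \in C_1^*$.

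Since the two parts are genuinely symmetric, I expect no real obstacle here; the proof is essentially routine once the flipping idea is isolated. The only points requiring care are invoking positivity to confirm that $\vect{a}$ is a zero of $f_0$ in part (a), and citing the correct containment in each part — $N_0^* \subseteq Z^{f_0}$ from its defining equation in part (a), versus $N_1^* \subseteq P_1^* = U^{f_0}$ from Claims~\ref{cl:n0_p0} and~\ref{cl:p1_p0} in part (b).
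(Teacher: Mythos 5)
Your proof is correct and uses essentially the same argument as the paper: both rest on the observation that flipping the $i$-th coordinate cannot change the value of $f_0$ (irrelevance), so a point with $i$-th coordinate $0$ (resp.\ $1$) cannot be a maximal zero (resp.\ minimal one) of $f_0$, which together with $N_0^* \subseteq Z^{f_0}$ and $N_1^* \subseteq P_1^* = U^{f_0}$ (Claims~\ref{cl:p1_p0} and~\ref{cl:n0_p0}) gives both containments. The paper phrases this contrapositively (every point of $N_0^*$ has $i$-th coordinate $1$, every point of $N_1^*$ has $i$-th coordinate $0$) while you argue directly about the given extremal point of $f_1$, but the mechanism is identical.
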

\begin{proof}
Let $\vect{x} \in N_0^*$ and assume $(\vect{x})_i = 0$. 
Then by changing in $\vect{x}$ the $i$-th coordinate from $0$
to $1$ we obtain a point $\vect{x}'$ with $f_0(\vect{x}') = 1 \neq f_0(\vect{x})$, 
since $\vect{x}$ is a maximal zero for $f_0$.
This contradicts the assumption that $x_i$ is irrelevant for $f_0$.
Therefore, $(\vect{x})_i = 1$ and hence no maximal zero for $f_1$ corresponding to $x_i$ belongs to $N_0^*$, i.e.
every maximal zero for $f_1$ corresponding to $x_i$ belongs to $P_0^* \setminus N_0^*$ 

Similarly, one can show that no minimal one for $f_1$ corresponding to $x_i$ belongs to $N_1^*$, i.e.
every minimal one for $f_1$ corresponding to $x_i$ belongs to $C_1^*$.
\end{proof}

Recall that there are exactly $k - 1 - m$ variables that are relevant for $f_1$ and irrelevant for $f_0$.
Lemma~\ref{lem:acyclic} implies that there are at least $k-m$ extremal points for $f_1$ corresponding to these variables. 
By Claim~\ref{cl:proper_pairs}, all these points belong to the set $C_1^* \cup P_0^* \setminus N_0^*$.
This conclusion establishes the main result of this section.

\begin{lemma}\label{lem:non-split-without-split-projection}
	Let $f = f(x_1, \ldots, x_n)$ be a positive non-canalyzing function 
	with $k$ relevant variables such that for some $i \in [n]$ at least one of the restrictions $f_0 = f_{|x_i=0}$ and $f_1 = f_{|x_i=1}$ is non-canalyzing.
	Then the number of extremal points of $f$ is at least $k+2$.
\end{lemma}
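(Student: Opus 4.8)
The plan is to argue by induction on $n$, following the two-case split already set up above. Case (a), where the distinguished variable $x_i$ is irrelevant for $f$, is immediate: then $f_{|x_i=0}\equiv f_{|x_i=1}$ is a positive non-canalyzing function on $n-1$ variables with the same $k$ relevant variables and exactly the same extremal points as $f$, so the induction hypothesis applied to this restriction yields $r(f)\ge k+2$ at once. The substance lies in case (b), where $x_i$ is relevant; after renaming I take $i=n$ and assume $f_0=f_{|x_n=0}$ is the non-canalyzing restriction, the symmetric situation with $f_1$ non-canalyzing being handled identically.

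First I would fix the strengthened induction hypothesis for $f_0$. Let $m$ be the number of relevant variables of $f_0$; since $x_n$ is relevant and $f$ is non-canalyzing, $1\le m\le k-1$. Because $f_0$ is non-canalyzing with $m\ge 1$ relevant variables it cannot be lro (every lro function with a relevant variable is canalyzing), so the ``moreover'' part of Theorem~\ref{th:extremal_main} applied in dimension $n-1$ delivers the \emph{strict} bound $r(f_0)\ge m+2$ rather than merely $m+1$. This extra unit is precisely the source of the $+1$ we are after.

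Next I would express $r(f)$ through $r(f_0)$ by sorting the extremal points of $f$ according to their $n$-th coordinate via the sets $C_0,P_0,C_1,P_1$ and their projections, together with the decompositions $Z^{f_0}=C_0^*\cup N_0^*$ and $U^{f_1}=C_1^*\cup N_1^*$. Claim~\ref{cl:p1_p0} identifies $P_1^*$ with the minimal ones of $f_0$ and $P_0^*$ with the maximal zeros of $f_1$, while Claim~\ref{cl:n0_p0} gives $N_0^*\subseteq P_0^*$; substituting into the count collapses it to $r(f)=r(f_0)+|C_1^*|+|P_0^*\setminus N_0^*|$. Combined with $r(f_0)\ge m+2$, the target $r(f)\ge k+2$ reduces to the single inequality $|C_1^*|+|P_0^*\setminus N_0^*|\ge k-m$, that is, to showing that the set $C_1^*\cup(P_0^*\setminus N_0^*)$ of extremal points \emph{not already charged to} $r(f_0)$ has at least $k-m$ elements.

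The main obstacle is exactly this last inequality. The idea is to exploit the $k-1-m$ variables that are relevant for $f_1$ but irrelevant for $f_0$: applying Lemma~\ref{lem:acyclic} to $f_1$ with this set of size $k-1-m$ produces at least $(k-1-m)+1=k-m$ extremal points of $f_1$ corresponding to these variables. The delicate step is to certify that \emph{all} of these points land inside $C_1^*\cup(P_0^*\setminus N_0^*)$ and are therefore genuinely uncounted so far; this is the content of Claim~\ref{cl:proper_pairs}, whose argument uses that a variable irrelevant for $f_0$ forces every maximal zero of $f_1$ corresponding to it to carry a $1$ in that coordinate, hence to avoid $N_0^*$. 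Once these $k-m$ points are shown to be distinct and contained in $C_1^*\cup(P_0^*\setminus N_0^*)$, the inequality follows, and we conclude $r(f)\ge (m+2)+(k-m)=k+2$, closing the induction.
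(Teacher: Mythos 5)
Your proposal is correct and follows essentially the same route as the paper's own proof: the same case split on whether $x_i$ is relevant, the same use of the induction hypothesis via the ``moreover'' part of Theorem~\ref{th:extremal_main} (non-canalyzing $\Rightarrow$ non-lro $\Rightarrow$ $r(f_0)\ge m+2$), the same counting identity $r(f)=r(f_0)+|C_1^*|+|P_0^*\setminus N_0^*|$ obtained from Claims~\ref{cl:p1_p0} and~\ref{cl:n0_p0}, and the same final step combining Lemma~\ref{lem:acyclic} with Claim~\ref{cl:proper_pairs} to place at least $k-m$ further extremal points in $C_1^*\cup(P_0^*\setminus N_0^*)$. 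I see no gaps.
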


%$U_0$, $U_0^{f_0}$, $U^{f_0}$, $Z^{f_1}$

%%%%%%%%%%%%%%%%%%%%%%%%%%%%%%%%%%%%%%%%%%%%%%%%%
%%%%%%%%%%%%%%%%%%%%%%%%%%%%%%%%%%%%%%%%%%%%%%%%%
\section{Chow and read-once functions}
\label{sec:Chow}
%%%%%%%%%%%%%%%%%%%%%%%%%%%%%%%%%%%%%%%%%%%%%%%%%
%%%%%%%%%%%%%%%%%%%%%%%%%%%%%%%%%%%%%%%%%%%%%%%%%

In this section, we look at the intersection of the classes of Chow and read-once functions
and show that this is precisely the class of lro functions. Thus, our result generalizes a result from  \cite{lists} showing that the class of lro functions 
is the intersection of the classes of read-once and threshold functions.  

%Without loss of generality, we consider only positive read-once functions, since $\overline{x}$ can always be renamed as $x'$.

There are two read-once functions that play a crucial role in our characterization of read-once Chow functions:
$$
	g_1 = g_1(x,y,z,u) = (x \orl y) \andl (z \orl u);
$$
$$
	g_2 = g_2(x,y,z,u) = (x \andl y) \orl (z \andl u).
$$

%Let us prove some facts about them.

\begin{lemma}\label{cl:min-ro-non-chow}
	Functions $g_1, g_2$ and all the functions obtained from them by negation of variables are not Chow. 
\end{lemma}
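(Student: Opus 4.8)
The plan is to show that each of these functions fails to be Chow by exhibiting an explicit companion function with identical Chow parameters. The crux is the following general principle, which I would state first: if a Boolean function $f$ on $B^n$ admits a set $T$ of distinct true points and a set $F$ of distinct false points with $|T|=|F|$ and $\sum_{\vect{y}\in T}\vect{y}=\sum_{\vect{x}\in F}\vect{x}$ (summation over $\mathbb{R}^n$), then $f$ is not Chow. Indeed, define $g$ to agree with $f$ everywhere except that it takes value $0$ on $T$ and value $1$ on $F$. Since $T$ and $F$ are disjoint and all points are distinct, $g$ is well-defined and $g\neq f$. The total number of true points is unchanged because $|T|=|F|$, so $w(g)=w(f)$; and for each coordinate $i$ we have $w_i(g)=w_i(f)-(\sum_{\vect{y}\in T}\vect{y})_i+(\sum_{\vect{x}\in F}\vect{x})_i=w_i(f)$. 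Thus $g$ has the same Chow parameters as $f$, witnessing that $f$ is not Chow. This is precisely the $2$-summability configuration of Theorem~\ref{th:asum}, made effective by the requirement that the four points be distinct.

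The second step is to supply such a configuration for $g_1$ and $g_2$. For $g_1=(x\orl y)\andl(z\orl u)$, reading coordinates as $(x,y,z,u)$, the points $(1,1,0,0)$ and $(0,0,1,1)$ are false while $(1,0,1,0)$ and $(0,1,0,1)$ are true, and $(1,1,0,0)+(0,0,1,1)=(1,1,1,1)=(1,0,1,0)+(0,1,0,1)$. Taking $F=\{(1,1,0,0),(0,0,1,1)\}$ and $T=\{(1,0,1,0),(0,1,0,1)\}$ gives the desired configuration, so $g_1$ is not Chow. For $g_2=(x\andl y)\orl(z\andl u)$ the roles are reversed: the same four points work, with $(1,1,0,0),(0,0,1,1)$ now true and $(1,0,1,0),(0,1,0,1)$ now false, whence $g_2$ is not Chow.

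It remains to handle the functions obtained by negating variables. Rather than redo the computation for every sign pattern, I would show that the existence of a swap configuration is invariant under variable negation. Let $\phi_j\colon B^n\to B^n$ flip the $j$-th coordinate, and let $f'(\vect{x})=f(\phi_j(\vect{x}))$ be the function obtained from $f$ by negating $x_j$. If $T,F$ is a swap configuration for $f$, then $\phi_j(T),\phi_j(F)$ are sets of true, respectively false, points of $f'$, still distinct and of equal size. For coordinates $i\neq j$ the coordinate sums are unchanged, while in coordinate $j$ each sum becomes $|T|-(\sum_{\vect{y}\in T}\vect{y})_j$ and $|F|-(\sum_{\vect{x}\in F}\vect{x})_j$ respectively; since $|T|=|F|$ and the original sums agree, the new sums agree as well. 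Hence $\phi_j(T),\phi_j(F)$ is a swap configuration for $f'$, and by the first step $f'$ is not Chow. Iterating over the negated variables covers every function obtained from $g_1$ or $g_2$ by negation.

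The argument is essentially obstruction-free: the only points requiring care are insisting that the four points be distinct (so that the swapped function genuinely differs from the original) and observing that equal cardinality of $T$ and $F$ is exactly what keeps the total-weight parameter $w$ fixed. The explicit four-point configurations are immediate, and the negation-invariance computation is short coordinate bookkeeping.
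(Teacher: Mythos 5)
Your proof is correct, but it is organized differently from the paper's. The paper proves the lemma by directly exhibiting a companion function: it observes that $g_1$ and the variable-transposed function $(x \orl z) \andl (y \orl u)$ differ (e.g.\ at $(1,0,1,0)$) yet share the Chow parameters $(6,6,6,6,9)$, and then disposes of $g_2$ and all negated variants with a brief ``in a similar way.'' You instead prove a reusable general principle---if a function has equally many distinct true points and distinct false points with equal coordinate sums (a ``distinct-point $2$-summability'' witness), then flipping the values on those points preserves all Chow parameters, so the function is not Chow---and then supply the explicit four-point configuration $(1,1,0,0),(0,0,1,1)$ versus $(1,0,1,0),(0,1,0,1)$, which works for $g_1$ and $g_2$ simultaneously with the roles of true and false points exchanged. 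Amusingly, for $g_1$ your flipped function is exactly the paper's companion $(x \orl z) \andl (y \orl u)$, so the witness is the same object reached by a different route. What your approach buys is twofold: the swap-configuration lemma makes the connection to $2$-summability (Theorem~\ref{th:asum}) explicit, and your negation-invariance argument ($\phi_j$ preserves swap configurations because $|T|=|F|$) rigorously covers all $2^4$ sign patterns for each of $g_1,g_2$, where the paper leaves that verification to the reader. The paper's version is shorter but relies on the reader to redo the parameter count for each negated variant; yours is longer but self-contained and mechanical.
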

\begin{proof}
	Function $g_1$ is not Chow, because $g_1$ is different from  $(x \orl z) \andl (y \orl u)$ (e.g. they have different values at the point $x=1$, $y=0$, $z=1$, $u=0$),
but both functions  have the same  Chow parameters $(6,6,6,6,9)$.
%	Indeed the functions defined by the formulas $(x \orl y) \andl (z \orl u)$ and $(x \orl z) \andl (y \orl u)$
%	have the same the Chow parameters (6,6,6,6,9).
In a similar way, one can show that neither $g_2$ nor any function obtained from $g_1$ or $g_2$ by negation of variables is Chow. 
% 	Similarly, $g_2$ is not Chow, because $g_2$ is different from  
% 	$(x \andl z) \orl (y \andl u)$, but both of them have the same Chow parameters $(5,5,5,5,7)$.
\end{proof}

% \begin{corollary}\label{cor:min_ro}
% 	$g_1, g_2$ are non-linear read-once functions.
% \end{corollary}
% \begin{proof}
% 	Assuming that $g_1, g_2$ are linear read-once, and using Theorem \ref{th:lro-th-ro} and
% 	Theorem \ref{th:th-chow} one would conclude that the functions are Chow, which would contradict
% 	Claim \ref{cl:min-ro-non-chow}.
%Linear read-once functions belong to the class of threshold functions \cite{lists} which in turn are Chow functions \cite{Chow1961}.
%Since $g_1, g_2$ are not Chow, they are non-linear read-once.
%\end{proof}

The following lemma shows that the class of Chow functions is closed under taking restrictions.
\begin{lemma}\label{lem:chow-closed}
	If $f(x_1, \dots, x_n)$ is a Chow function, then any restriction of $f$ is also Chow.
\end{lemma}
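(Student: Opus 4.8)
The plan is to reduce to the case of a single elementary restriction and then argue by contradiction, lifting a would-be collision of Chow parameters from the restriction back to $f$. Since any restriction of $f$ is obtained as a composition of restrictions each fixing one variable at a time, it suffices to prove that for every $k \in [n]$ and every $\alpha \in \{0,1\}$ the function $f_{|x_k=\alpha}$ is Chow; the general statement then follows by a routine induction on the number of fixed variables. Without loss of generality I would take $k = n$ and write $g = f_{|x_n=0}$ and $h = f_{|x_n=1}$, both functions on $B^{n-1}$.

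First I would record how the Chow parameters of $f$ split across the coordinate $x_n$. Counting true points separately in the two half-cubes $x_n = 0$ and $x_n = 1$ gives
$$
w(f) = w(g) + w(h), \qquad w_i(f) = w_i(g) + w_i(h) \ (i \in [n-1]), \qquad w_n(f) = w(h).
$$
The key feature to exploit is that each parameter of $f$ is the sum of the corresponding parameters of $g$ and $h$, and that the parameter $w_n(f)$ of the fixed variable depends only on the half-cube that is \emph{not} being altered.

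Now suppose for contradiction that, say, $g = f_{|x_n=0}$ is not Chow. Then there is a function $g' \neq g$ on $B^{n-1}$ with $w(g') = w(g)$ and $w_i(g') = w_i(g)$ for all $i \in [n-1]$. I would define $f'$ on $B^n$ by setting $f'_{|x_n=0} = g'$ and $f'_{|x_n=1} = h$, i.e.\ by replacing $f$ only on the half-cube $x_n = 0$. Then $f' \neq f$ since $g' \neq g$, yet plugging $g'$ into the displayed formulas and using that $g'$ matches $g$ on every parameter yields $w(f') = w(g') + w(h) = w(f)$, $w_i(f') = w_i(g') + w_i(h) = w_i(f)$ for $i \in [n-1]$, and $w_n(f') = w(h) = w_n(f)$ (this last equality not even involving $g$). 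Hence $f'$ has the same Chow parameters as $f$, contradicting that $f$ is Chow. The symmetric construction, replacing $h = f_{|x_n=1}$ by a collision $h'$ and keeping the $x_n=0$ half fixed, handles the restriction $f_{|x_n=1}$; here one uses $w(h') = w(h)$ to also preserve $w_n(f') = w(h') = w_n(f)$.

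The argument is essentially a bookkeeping one, and the only thing that requires care --- the step I would treat as the main (if mild) obstacle --- is verifying that \emph{every} Chow parameter is preserved, including the parameter $w_n$ of the variable being fixed. This is precisely where the additive splitting above is used: modifying one half-cube while freezing the other leaves untouched all counts that live entirely in the frozen half, and transports the remaining counts through the assumed equality of the Chow parameters of $g$ and $g'$ (resp.\ $h$ and $h'$).
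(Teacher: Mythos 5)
Your proof is correct and follows essentially the same route as the paper: assume the restriction has a Chow-parameter collision $g'$, splice $g'$ into the corresponding sub-cube of $f$ while freezing the rest, and verify by additive counting that all Chow parameters of $f$ are preserved, contradicting that $f$ is Chow. The only differences are organizational: the paper performs this surgery for a general restriction (fixing $k$ variables at once) rather than reducing to single-variable restrictions plus induction, and you are slightly more explicit in checking the parameter $w_n$ of the fixed variable, a detail the paper leaves implicit.
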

\begin{proof}
Suppose to the contrary that $f$ has a restriction which is not a Chow function, namely,
$$
	g = g(x_{i_{k+1}}, \dots, x_{i_n}) := f_{x_{i_{1}}= \alpha_{1}, \dots, x_{i_k} = \alpha_{k}},
$$
for some $i_1, \dots, i_n \in [n]$, $\alpha_1, \dots, \alpha_k \in \{0,1\}$ and $g$ is not a Chow function.
Then there exists a function $g' = g'(x_{i_{k+1}}, \dots, x_{i_n})$ with the same Chow parameters as $g$. 
%$$
%w_{i_{k+1}}(g) = w_{i_{k+1}}(g'), \dots, w_{i_n}(g) = w_{i_n}(g'), w(g) = w(g').
%$$
We define function $f'(x_1, \dots, x_n)$ as follows:

$$
f'(x_1, \dots, x_n) = 
\begin{cases} 
	f(x_1, \dots, x_n) 			& \mbox{if } (x_{i_1}, \dots, x_{i_k}) \neq (\alpha_1, \dots, \alpha_k), \\ 
	g'(x_{k+1}, \dots, x_n) 	& \mbox{if } (x_{i_1}, \dots, x_{i_k}) = (\alpha_1, \dots, \alpha_k).
\end{cases}
$$

%$$
%	f'(x_1, \dots, x_n) = f(x_1, \dots, x_n) \text{ for} (x_{i_1}, \dots, x_{i_k}) \neq (\alpha_1, \dots, \alpha_k),
%$$
%$$
%	f'(x_1, \dots, x_n) = g'(x_{k+1}, \dots, x_n) \text{ for} (x_{i_1}, \dots, x_{i_k}) = (\alpha_1, \dots, \alpha_k).
%$$
Since $w(g) = w(g')$, we conclude that $w(f) = w(f')$.  
%and $w_i(f) = w_i(f')$ for each $i \in \{i_1, \dots, i_k\}$.
%From the equation $w(g) = w(g')$ we conclude that $w(f) = w(f')$ and $w_i(f) = w_i(f')$ for each $i \in \{i_1, \dots, i_k\}$.
Similarly, for every $i \in \{i_{k+1}, \dots, i_n\}$, the equality $w_i(g)=w_i(g')$ implies
that $w_i(f)=w_i(f')$.
%From the equation $w_i(g)=w_i(g')$ we conclude that $w_i(f)=w_i(f')$ for each $i \in \{i_{k+1}, \dots, i_n\}$.
%It means that $f$ and $f'$ have the same Chow parameters, a contradicting to the fact that $f$ is a Chow function.
Consequently, $f$ and $f'$ have the same Chow parameters, which contradicts the fact that $f$ is Chow.
\end{proof}

%\begin{lemma}\label{lem:ro-split}
%	If $f$ is a read-once split function such that $f_{|x_i=0} \equiv \vect{0}$ $(f_{|x_i=1} \equiv \vect{1})$ then it admits the following formula:
%$$
%	f = x_i \andl f' \space (f = x_i \orl f'),
%$$
%	where $f'$ is a read-once function for which $x_i$ is an irrelevant variable.
%\end{lemma}

\begin{lemma}\label{cl:non-lro-non-split}
Any canalyzing read-once function $f$, which is not lro,  has a non-constant non-canalyzing read-once function as a restriction.
\end{lemma}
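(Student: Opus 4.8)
The plan is to argue by induction on the number $n$ of relevant variables of $f$, exploiting the fact that a read-once function which is canalyzing can be stripped of a single variable while remaining read-once and non-lro. At each stage I either exhibit the desired restriction directly or pass to a strictly smaller instance.

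First I would record the decomposition that drives the induction. Since $f$ is canalyzing, there is a variable $x_i$ and a value $\alpha$ with $f_{|x_i=\alpha}$ constant. Writing $\ell_i$ for the appropriate literal of $x_i$ (that is, $x_i$ or $\notl{x_i}$) and $f' = f_{|x_i=\beta}$ for the complementary, non-constant restriction, a straightforward case analysis on which of $f_{|x_i=0}, f_{|x_i=1}$ is constant and on its value shows that $f = \ell_i \andl f'$ or $f = \ell_i \orl f'$, where $f'$ does not depend on $x_i$. Because any restriction of a read-once function is read-once (substituting a constant into a read-once formula merely deletes or absorbs one leaf), $f'$ is read-once. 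The crucial point is that $f'$ is not lro: if it were, it would admit a nested formula $t$, and since $f'$ does not involve $x_i$ the formula $t$ contains neither $x_i$ nor $\notl{x_i}$, so rule (2) of the definition of a nested formula would make $\ell_i \andl t$ (resp.\ $\ell_i \orl t$) a nested formula representing $f$, contradicting that $f$ is not lro.

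With this decomposition in hand I would split into two cases according to whether $f'$ is canalyzing. If $f'$ is non-canalyzing, then $f'$ itself is the required restriction: it is read-once, non-canalyzing, and non-constant, the last because a constant function is lro whereas $f'$ is not. If instead $f'$ is canalyzing, then $f'$ is a canalyzing read-once non-lro function on $n-1$ variables, so the induction hypothesis supplies a non-constant non-canalyzing read-once restriction $h$ of $f'$; since the relation ``is a restriction of'' is transitive and $f'$ is itself a restriction of $f$, the function $h$ is a restriction of $f$, as required. The recursion is well-founded because the number of variables strictly decreases at each step; equivalently, at the least value of $n$ admitting a canalyzing non-lro read-once function the second case cannot occur, which furnishes the base of the induction.

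The only genuinely delicate part is establishing the decomposition $f = \ell_i \andl f'$ or $f = \ell_i \orl f'$ together with the inheritance of non-lro-ness by $f'$; once these are in place the induction is routine. I expect the care to be concentrated in handling all four combinations of the canalyzing direction and the sign of the literal, and in verifying that a nested formula for a hypothetically lro $f'$ genuinely omits $x_i$, which is precisely what licenses the application of rule (2) and yields the contradiction.
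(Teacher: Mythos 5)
Your proof is correct and takes essentially the same approach as the paper's: both extract the canalyzing decomposition $f = \ell_i \vee f'$ or $f = \ell_i \wedge f'$, observe that the non-constant restriction $f'$ inherits read-once-ness and non-lro-ness, and then either take $f'$ as the desired non-canalyzing restriction or descend to a strictly smaller instance. The only difference is presentational: the paper phrases the descent as a minimum-counterexample argument and dismisses three of the four literal/connective cases as ``similar,'' whereas you run an explicit induction on the number of variables and spell out the case analysis and the appeal to rule (2) of the nested-formula definition.
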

\begin{proof}
Let $f$ be a minimum counterexample to the claim. 
Since $f$ is canalyzing, there exists $\alpha, \beta \in \{ 0, 1 \}$ such that $f_{|x_i=\alpha} \equiv \beta$. 
We assume that $\alpha = \beta = 1$, i.e. $f_{|x_i=1} \equiv \vect{1}$, in which case $f = x_i \orl f_{|x_i=0}$ (the other cases are similar).
%we may assume without loss of generality that there exists an index $i \in [n]$ such that $f_{|x_i=1} \equiv \vect{1}$, in which case 
%$f = x_i \orl f_{|x_i=0}$ (\ZolAdd{the other cases are similar}). 

Clearly, $f_{|x_i=0}$ is read-once, since any restriction of a read-once function is read-once. 
Also,  $f_{|x_i=0}$ is not lro, since otherwise  $f$ is lro, and hence  $f_{|x_i=0}$ is not a constant function. 
Since $f$ is a counterexample,  $f_{|x_i=0}$ is canalyzing and has no non-constant non-canalyzing read-once restrictions.
But then we have a contradiction to the minimality of $f$.
\end{proof} 

\begin{theorem}\label{th:lro-chow}
	For a read-once function $f$ the following statements are equivalent:
	\begin{enumerate}
		\item[$(1)$] $f$ is an lro function;
		\item[$(2)$] $f$ is a Chow function;
		\item[$(3)$] $f$ does not have $g_1$ or $g_2$ or any function obtained from $g_1$ or $g_2$ by negation of variables as a restriction.
	\end{enumerate}
\end{theorem}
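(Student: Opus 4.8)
The plan is to prove the cycle of implications $(1) \Rightarrow (2) \Rightarrow (3) \Rightarrow (1)$. The implication $(1) \Rightarrow (2)$ is immediate from known inclusions: by the result of \cite{lists} every lro function is a threshold function, and by Chow's theorem every threshold function is a Chow function; hence every lro function is Chow. For $(2) \Rightarrow (3)$ I would argue by contraposition. If $f$ has one of $g_1$, $g_2$, or a function obtained from them by negating variables as a restriction, then by Lemma~\ref{cl:min-ro-non-chow} this restriction is not Chow, and since the class of Chow functions is closed under restrictions (Lemma~\ref{lem:chow-closed}), $f$ itself is not Chow. This settles both of the easy implications.

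The substance of the theorem is $(3) \Rightarrow (1)$, which I would also prove by contraposition: assuming $f$ is read-once but not lro, I must exhibit a restriction of $f$ equal to $g_1$, $g_2$, or one of their negation-variants. First I would reduce to the non-canalyzing case. If $f$ is canalyzing, Lemma~\ref{cl:non-lro-non-split} supplies a non-constant non-canalyzing read-once restriction $g$ of $f$; since any restriction of $g$ is again a restriction of $f$, it suffices to find a forbidden restriction inside $g$, so I may assume from the outset that $f$ is non-canalyzing.

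For a non-canalyzing read-once $f$ I would work with its canonical read-once formula, whose parse tree strictly alternates $\andl$- and $\orl$-gates with leaves labelled by literals. The non-canalyzing assumption forces the top gate to have no leaf child (a leaf child of a top $\andl$-gate could be set to $0$ to make $f$ constant, and dually for an $\orl$-gate), so the root is, say, an $\andl$-gate with at least two $\orl$-gate children $h_1, h_2, \ldots, h_m$, each depending on at least two variables. I would then restrict: set $h_3, \ldots, h_m \equiv \vect{1}$, collapse $h_1$ to a disjunction of two literals $\ell_1 \orl \ell_2$ and $h_2$ to $\ell_3 \orl \ell_4$. Each such collapse is achieved by restricting along a root-to-leaf path inside the subformula, sending the off-path siblings to the identity value of their gate and the unwanted disjuncts to $\vect{0}$. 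Because $f$ is read-once, the four surviving variables are distinct, so $f$ restricts exactly to $(\ell_1 \orl \ell_2) \andl (\ell_3 \orl \ell_4)$, which is $g_1$ up to negation of those variables occurring as negative literals. If instead the root is an $\orl$-gate, the dual restriction yields a negation-variant of $g_2$. This produces the required forbidden restriction and closes the cycle.

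The main obstacle is the non-canalyzing base case, and specifically the bookkeeping in the read-once tree: justifying the canonical strictly-alternating form, verifying that any read-once subformula can indeed be driven to $\vect{0}$, to $\vect{1}$, or to a single prescribed literal by a restriction, and checking that the signs of $\ell_1, \ldots, \ell_4$ place us precisely among the negation-variants of $g_1$ and $g_2$ rather than some other function. The reduction via Lemma~\ref{cl:non-lro-non-split} conveniently removes the canalyzing functions, which otherwise lack a usable top-gate structure, so that this structural argument needs to be carried out only once.
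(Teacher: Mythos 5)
Your proposal is correct and follows essentially the same route as the paper: the same two lemmas (Lemma~\ref{lem:chow-closed} and Lemma~\ref{cl:min-ro-non-chow}) dispatch $(1)\Rightarrow(2)\Rightarrow(3)$, and for $(3)\Rightarrow(1)$ the paper likewise reduces to the non-canalyzing case via Lemma~\ref{cl:non-lro-non-split} and then collapses the two factors of the top-level decomposition of the read-once formula by restrictions to obtain $g_1$ or $g_2$ (up to negation of variables). The only cosmetic differences are that the paper first assumes positivity and treats the root-$\orl$ case explicitly (yielding $g_2$), while you work with literals directly and treat the root-$\andl$ case (yielding $g_1$); these are dual presentations of the same argument.
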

\begin{proof}
It is known that all lro functions are threshold \cite{lists} and all threshold functions are Chow \cite{Chow1961}.
Therefore, $(1)$ implies $(2)$.

To prove that $(2)$ implies $(3)$, we observe that by Lemma~\ref{lem:chow-closed} any restriction of $f$ is Chow. 
This together with Lemma~\ref{cl:min-ro-non-chow} imply the conclusion.
%that neither $g_1$ nor $g_2$ is a restriction of $f$.
	
%	Consider the implication $(2) \Rightarrow (3)$. 
%	By Lemma \ref{lem:chow-closed} any restriction of $f$ should be Chow.
%	By Claim \ref{cl:min-ro-non-chow} $g_1,g_2$ are not Chow and we conclude that $f$ 
%	does not have such a restriction.

	Finally, to prove that $(3)$ implies $(1)$, we assume that $f$ is positive 
	%(since $\overline{x}$ can always be renamed as $x'$)
and show that if $f$ is non-lro, then it has as a restriction at least one of the functions $g_1$ and $g_2$ (in the case
of a non-positive function, similar arguments show that $f$ contains as a restriction a function obtained from $g_1$ or $g_2$ by possibly negating some variables).
%	We will prove implication $(3) \Rightarrow (1)$ by contradiction.
%	For this let us assume that $f$ is a read-once function, but not a linear read-once. 
%	We will show that $f$ necessarily has as a restriction at least one of the functions $g_1$ and $g_2$.
	Also, without loss of generality we assume that $f$ is non-canalyzing, otherwise we would consider a non-constant
	non-canalyzing restriction of $f$ which is guaranteed by Lemma~\ref{cl:non-lro-non-split}.
%	Let $f$ be a split function and let $f'$ be a non-split restriction of $f$ 
%	(which exists by Claim \ref{cl:non-lro-non-split}):
%	$$
%		f' = f_{|x_{i_1}=0, \dots, x_{i_k}=0, x_{i_{k+1}}=1, \dots, x_{i_{k+l}}=1},
%	$$
%	for some sets of indexes $N = \{i_1, \dots, i_{i_k}\}, P = \{i_{k+1}, \dots, i_{k+l}\}$.
	
Since $f$ is a read-once function, there exist read-once functions $f_1$ and $f_2$ such that 
	either $f = f_1 \andl f_2$ or $f = f_1 \orl f_2$ and the sets of relevant variables of $f_1$ and $f_2$ 
	are disjoint.
	We let $f = f_1 \orl f_2$, since the other case can be proved similarly.
	Suppose, one of the functions $f_1$ and $f_2$, say $f_1$, does not contain a conjunction in its 
	read-once formula. Then for any relevant variable $x_i$ of $f_1$ we have $f_{|x_i=1} \equiv \vect{1}$,
	which contradicts the assumption that $f$ is non-canalyzing.
	Hence, both $f_1$ and $f_2$ necessarily contain conjunctions in their read-once formulas.
	This means that there exist 
	$i_1, \dots, i_{n} \in [n]$, $\alpha_5,\dots,\alpha_{n} \in \{0,1\}$ such that 
	$$
		f_{1|x_{i_5}=\alpha_5,\dots,x_{i_k}=\alpha_k}= x_{i_1} \andl x_{i_2}, \text{ and }
	$$
	$$
		f_{2|x_{i_{k+1}}=\alpha_{k+1},\dots,x_{i_{n}}=\alpha_{n}}= x_{i_3} \andl x_{i_4},
	$$
	where $\{x_{i_5}, \dots, x_{i_k}\}$ and $\{x_{i_{k+1}}, \dots, x_{i_{n}}\}$ are the sets of relevant variables 
	of the functions $f_1$ and $f_2$, respectively.
	Consequently
	\begin{equation}
		\begin{split}
			%& f_{|x_{i_5}=\alpha_5, \dots, x_{i_k}=\alpha_k, x_{i_{k+1}}=\alpha_{k+1}, \dots,  
			%x_{i_{n}}=\alpha_{n}} = \\
			& f_{|x_{i_5}=\alpha_5, \dots, x_{i_{n}}=\alpha_{n}} = \\
			& f_{1|x_{i_5}=\alpha_5,\dots,x_{i_k}=\alpha_k} \orl 
			f_{2|x_{i_{k+1}}=\alpha_{k+1}, \dots, x_{i_{n}}=\alpha_{n}} = \\
			& (x_{i_1} \andl x_{i_2}) \orl (x_{i_3} \andl x_{i_4}).
		\end{split}
	\end{equation}
\end{proof}

%%%%%%%%%%%%%%%%%%%%%%%%%%%%%%%%%%%%%%%%%%%%%%%%%
%%%%%%%%%%%%%%%%%%%%%%%%%%%%%%%%%%%%%%%%%%%%%%%%%
\section{Threshold functions and specification number}
\label{sec:threshold}
%%%%%%%%%%%%%%%%%%%%%%%%%%%%%%%%%%%%%%%%%%%%%%%%%
%%%%%%%%%%%%%%%%%%%%%%%%%%%%%%%%%%%%%%%%%%%%%%%%%

In this section, we turn to threshold functions and characterize the class of lro functions within this universe by a set $\cal G$ of minimal functions which are not linear read-once
(Section~\ref{subsec-counerexample2}). All functions in $\cal G$ depending on $n$ variables  have specification number $2n$, which can be viewed as an argument supporting Conjecture~\ref{con:conjecture}.
Nevertheless, in Section~\ref{subsec-counerexample} we disprove the conjecture.

\subsection{Minimal non-lro threshold functions}
\label{subsec-counerexample2}
%%%%%%%%%%%%%%%%%%%%%%%%%%%%%%%%%%%

For $n\ge 3$, denote by $g_n$ the function defined by its DNF
$$
g_n = x_1x_2 \orl x_1x_3 \orl \dots \orl x_1x_n \orl x_2\dots x_n.
$$
%and by $\cal G$ the set of functions $g_n$ for all values of $n\ge 3$. 

\begin{lemma}\label{lem:g}
	For any $n \ge 3$,
	the function $g_n$ is positive, non-lro, threshold function, depending on all its variables,
	 and the specification number of $g_n$ is $2n$.
\end{lemma}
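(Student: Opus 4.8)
The plan is to verify the five assertions in turn, treating positivity, relevance of all variables, and the threshold property as routine and reserving the real work for the specification number. Positivity is immediate, since the defining DNF of $g_n$ uses only positive literals. To see that every variable is relevant I would exhibit a witnessing pair for each: for $i\ge 2$, the point $(1,0,\dots,0,\underset{i}{1},0,\dots,0)$ is true but becomes false on setting $x_i=0$; and for $x_1$, the point $(1,1,0,\dots,0)$ is true while $(0,1,0,\dots,0)$ is false (this uses $n\ge 3$, so that not all of $x_2,\dots,x_n$ equal $1$). For the threshold property I would take the weights $w_1=n-2$, $w_2=\dots=w_n=1$ and threshold $t=n-2$, and write $W(\vect{x})=(n-2)x_1+x_2+\dots+x_n$. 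A short case split on $x_1$ and on $s=x_2+\dots+x_n$ shows that $W>n-2$ holds exactly when either $x_1=1$ and $s\ge 1$, or $x_1=0$ and $s=n-1$, which is precisely the condition $g_n(\vect{x})=1$; hence $W\le n-2$ represents $g_n$ and $g_n$ is threshold.

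The non-lro claim I would deduce from Theorem~\ref{th:extremal_main} via the extremal-point count. A direct inspection gives that the maximal zeros of $g_n$ are the point $(1,0,\dots,0)$ together with the $n-1$ points whose $0$'s lie exactly in coordinate $1$ and in one coordinate $j\in\{2,\dots,n\}$; and that the minimal ones are the $n-1$ points with $1$'s exactly in coordinates $1$ and $i$ ($i\in\{2,\dots,n\}$) together with $(0,1,\dots,1)$. Thus $r(g_n)=2n$. Since $g_n$ depends on all $n$ variables and $2n>n+1$ for $n\ge 3$, Theorem~\ref{th:extremal_main} forces $g_n$ to be non-lro.

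For the specification number I would invoke Theorem~\ref{cl:ess-sigma} and show that $g_n$ has exactly $2n$ essential points. The upper bound $\sigma_{\mathcal{H}_n}(g_n)\le 2n$ is immediate: the $2n$ extremal points specify any positive threshold function depending on all its variables \cite{AnthonyBrightwellShaweTaylor1995}, and every essential point lies in every specifying set, so the essential points are among the $2n$ extremal ones. It remains to prove that \emph{each} extremal point is essential. The key observation, read off from the weights above, is that every maximal zero satisfies $W=n-2$ while every minimal one satisfies $W=n-1$. Consequently the hyperplane $W=n-2$ is a separating hyperplane for $g_n$ passing through all $n$ maximal zeros, so by Theorem~\ref{th:ess-all} each maximal zero is essential.

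The crux is the essentiality of the minimal ones, which I would obtain by passing to $\overline{g_n}$. Since $g_n(\vect{x})=1\iff W(\vect{x})\ge n-1\iff -W(\vect{x})\le-(n-1)$, the weights $-w_i$ with threshold $-(n-1)$ represent $\overline{g_n}$, and the associated separating hyperplane is $W=n-1$, which passes through all $n$ minimal ones; the ``ones'' form of Theorem~\ref{th:ess-all} then makes each minimal one essential. One can package the two computations together by noting that $g_n$ is self-dual, i.e.\ $g_n(\notl{\vect{x}})=1-g_n(\vect{x})$, so that complementation of coordinates carries the maximal-zero hyperplane $W=n-2$ onto the minimal-one hyperplane $W=n-1$; thus all $2n$ extremal points are essential and $\sigma_{\mathcal{H}_n}(g_n)=2n$. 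I expect this last step to be the main obstacle: one must produce a genuine threshold representation whose separating hyperplane simultaneously passes through all minimal ones, and the clean resolution is the remark that the single representation with weights $(n-2,1,\dots,1)$ already aligns every maximal zero on the threshold hyperplane and every minimal one on the parallel hyperplane one unit above, which by duality serves as a separating hyperplane for $\overline{g_n}$.
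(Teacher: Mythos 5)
Your proof is correct and follows essentially the same route as the paper: the same $2n$ extremal points, the same weight vector $(n-2,1,\dots,1)$ placing all maximal zeros on one hyperplane and all minimal ones on the parallel hyperplane one unit above, and the same combination of Theorem~\ref{cl:ess-sigma}, Theorem~\ref{th:ess-all} and the fact that the extremal points specify a positive threshold function depending on all its variables. The only divergences are minor: the paper deduces non-lro from $g_n$ being non-canalyzing (a one-line check) rather than via the extremal-point count of Theorem~\ref{th:extremal_main}, it verifies thresholdness through the CNF rather than a direct case split, and it leaves implicit the dual separating hyperplane $W=n-1$ for $\overline{g_n}$ that makes the minimal ones essential, which you spell out explicitly (and your threshold value $n-2$ is the one consistent with the paper's convention that $f(\vect{x})=0 \iff \sum w_i x_i \le t$).
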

\begin{proof}
	Clearly, $g_n$ depends on all its variables.
	Furthermore, $g_n$ is positive, since its DNF contains no negation of a variable.
	Also, it is easy to verify that $g_n$ is not canalyzing, and therefore %by Observation~\ref{obs:lro-split}  
	$g$ is non-lro.
	
	Now, we claim that the CNF of $g_n$ is
	$$
		(x_1 \vee x_2)(x_1 \vee x_3)\dots (x_1 \vee x_n)(x_2 \vee x_3 \vee \dots \vee x_n).
	$$
	Indeed, the equivalence of the DNF and CNF can be directly checked by expanding the latter
	and applying the absorption law:
	\begin{equation*}
		\begin{split}
		       & (x_1 \vee x_2)(x_1 \vee x_3)\dots (x_1 \vee x_n)(x_2 \vee x_3 \vee \dots \vee x_n) \\
			& = (x_1 \vee x_2x_3\dots x_n)(x_2 \vee x_3 \vee \dots \vee x_n) \\
			& = x_1 x_2 \vee x_1 x_3 \vee \dots \vee x_1x_n \vee x_2x_3\dots x_n.
		\end{split}
	\end{equation*}
	From the DNF and the CNF of $g_n$ we retrieve the minimal ones
	$$
	\begin{array}{r}
		\vect{x}_1 = (1,1,0,\dots,0),\\
		\vect{x}_2 = (1,0,1,\dots,0),\\
		\dotfill                         \\
		\vect{x}_{n-1} = (1,0,0,\dots,1),\\
		\vect{x}_n = (0,1,1,\dots,1),\\
	\end{array}	
	$$
	and maximal zeros of $g_n$
	$$
	\begin{array}{r}
    \vect{y}_1 = (0,0,1,\dots,1),\\
    \vect{y}_2 = (0,1,0,\dots,1),\\
		\dotfill                             \\
		\vect{y}_{n-1} = (0,1,1,\dots,0),\\
		\vect{y}_n = (1,0,0,\dots,0),\\
	\end{array}
	$$
	respectively (see Theorems 1.26, 1.27 in \cite{CH11}).
	
	It is easy to check that all minimal ones $\vect{x}_1,\vect{x}_2,\dots,\vect{x}_n$ 
	satisfy the equation 
	$$
		(n-2)x_1 + x_2+x_3+\dots+x_n = n-1,
	$$
	and all maximal zeros $\vect{y}_1, \vect{y}_2, \dots, \vect{y}_n$
	satisfy the equation 
	$$
		(n-2)x_1 + x_2+x_3+\dots+x_n = n-2.
	$$
	Hence $(n-2)x_1 + x_2+x_3+\dots+x_n \le n-1$ is a threshold inequality representing the function $g_n$.

	For any positive threshold function $f$, which depends on all its variables, the set of
 	its extremal points specifies $f$. Furthermore, every essential point of $f$ must belong to each specifying set.
 	Therefore, all essential points of $g_n$ are extremal.
	On the other hand, by Theorem~\ref{th:ess-all}, 
	all extremal points of $g_n$ are essential and therefore, by Theorem~\ref{cl:ess-sigma}, $\sigma_{\mathcal{H}_n}(g_n) = 2n$.
\end{proof}

It is not difficult to see that $g_n$ ($n\ge 3$) is a {\it minimal} threshold function which is not lro,
i.e. any restriction of $g_n$ ($n\ge 3$) is an lro function. Moreover, the same is true for any function obtained from $g_n$ ($n\ge 3$) by negation of variables, 
since the negation of a variable of a threshold function results in a threshold function. We denote the set of all these minimal functions by 
$\cal G$ and show in what follows that there are no other minimal threshold functions which are not lro.

\begin{theorem} \label{prop:g}
A threshold function $f$ is lro if and only if it does not contain any function from $\cal G$ as a restriction.
\end{theorem}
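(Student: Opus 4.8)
The plan for the forward implication is immediate. Since both the class of threshold functions and the class of read-once functions are closed under taking restrictions, their intersection is as well, and by \cite{lists} this intersection is exactly the class of lro functions; as every member of $\cal G$ is by construction a non-lro threshold function, no lro function can have one as a restriction. For the converse I would argue by contraposition: assuming $f$ is a threshold function that is not lro, I must produce a restriction of $f$ lying in $\cal G$. Passing to a restriction of $f$ with the fewest relevant variables among all non-lro restrictions, I obtain a threshold function $g$ (a restriction of a threshold function is threshold) that is non-lro but all of whose proper restrictions are lro, so it suffices to prove $g\in\cal G$. Because negating a variable preserves the threshold property, the lro property, and membership in $\cal G$, and because threshold functions are unate, I may replace $g$ by its positively-oriented version and assume $g$ is positive.

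Next I would record the structure forced by minimality. All variables of $g$ are relevant, since an irrelevant variable would make $g$ coincide with one of its proper (hence lro) restrictions. Moreover $g$ is non-canalyzing: if $g_{|x_i=1}\equiv\vect{1}$ or $g_{|x_i=0}\equiv\vect{0}$ then $g$ equals $x_i\orl g_{|x_i=0}$ or $x_i\andl g_{|x_i=1}$, which is lro because the inner restriction is lro. Finally every restriction $g_{|x_i=\alpha}$, being a non-constant lro function, is canalyzing, so $g$ satisfies exactly the hypotheses of Claim~\ref{cl:com_var}. I would then fix a separating inequality with non-negative weights $w_1\ge w_2\ge\cdots\ge w_n>0$ and threshold $t$ (a positive threshold function admits such a representation, with strictly positive weights on relevant variables).

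The heart of the argument is extracting four numerical inequalities and reading off $g$ from them. Non-canalyzing-ness gives $w_i\le t$ for every $i$ together with $w_2+\cdots+w_n>t$, while applying Claim~\ref{cl:com_var} to the lightest variable $x_n$ produces a weight-two minimal one and a weight-$(n-2)$ maximal zero covering $x_n$, which after using $w_1\ge w_m$ yield $w_1+w_n>t$ and $w_2+\cdots+w_{n-1}\le t$. From $w_1+w_n>t$ and monotonicity of the weights, every pair $\{x_1,x_j\}$ satisfies $w_1+w_j>t$, so $g_{|x_1=1}$ is true on every nonempty assignment and false on the all-zero one, i.e.\ $g_{|x_1=1}=x_2\orl\cdots\orl x_n$; symmetrically, $w_2+\cdots+w_{n-1}\le t$ forces every assignment on $x_2,\dots,x_n$ that drops at least one variable to be false while the all-ones assignment remains true, so $g_{|x_1=0}=x_2\andl\cdots\andl x_n$. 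Writing the positive function as $g=x_1\,g_{|x_1=1}\orl g_{|x_1=0}$ and expanding gives precisely $x_1x_2\orl x_1x_3\orl\cdots\orl x_1x_n\orl x_2\cdots x_n=g_n$, whence $g\in\cal G$.

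The routine part is the monotone-weight bookkeeping; the conceptual crux, and the step I expect to be the main obstacle, is recognizing that once all proper restrictions are known to be lro (hence canalyzing), the heaviest variable must act simultaneously as a global OR-hub after being fixed to $1$ and as a global AND-hub after being fixed to $0$, which is what the two inequalities at $x_n$ encode. The points requiring care are the reduction to the positive, non-canalyzing, all-relevant case and the existence of the non-negative-weight representation; I note that no appeal to the exceptional $n=4$ configuration of Claim~\ref{cl:VL} is needed, because that configuration is $2$-summable and hence, by Theorem~\ref{th:asum}, not a threshold function at all.
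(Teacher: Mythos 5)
Your proof is correct, but the converse direction follows a genuinely different route from the paper. The paper's proof is a short reduction to a heavy external result: it invokes Stetsenko's classification \cite{Stetsenko} of all minimal non-read-once functions (up to renaming and negation of variables), namely $g_n$, $h^1_n$, $h^2_n$, $h^3$, $h^4$, exhibits explicit $2$-summability certificates showing that every function on that list other than $g_n$ is not threshold (via Theorem~\ref{th:asum}), and concludes that a threshold function avoiding $\cal G$ avoids \emph{all} minimal non-read-once functions, hence is read-once, hence lro by \cite{lists}. You instead give a self-contained structural argument: pass to a minimal non-lro threshold restriction, reduce (by negating variables and using unateness) to a positive, non-canalyzing function all of whose one-variable restrictions are non-constant lro and therefore canalyzing, and then use Claim~\ref{cl:com_var} together with four inequalities on a monotone weight representation to force $g_{|x_1=1}=x_2\vee\dots\vee x_n$ and $g_{|x_1=0}=x_2\wedge\dots\wedge x_n$, i.e.\ $g=g_n$. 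I checked the weight bookkeeping ($w_i\le t$ for all $i$; $w_2+\dots+w_n>t$; $w_1+w_n>t$; $w_2+\dots+w_{n-1}\le t$) and it does pin the function down exactly as claimed. What your approach buys is independence from Stetsenko's classification and a pleasant reuse of the Section~\ref{sec:ex} machinery; what the paper's approach buys is brevity and the additional information that $g_n$ is the \emph{only} threshold function among all minimal non-read-once functions. Your observation that the exceptional $n=4$ configuration of Claim~\ref{cl:VL} is $2$-summable, so never arises for threshold functions, is also correct.

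Two small repairs are needed in your write-up. First, choosing $g$ with the fewest \emph{relevant} variables among non-lro restrictions does not by itself yield that all proper restrictions of $g$ are lro: restricting an irrelevant variable leaves the number of relevant variables unchanged, so your parenthetical ``hence lro'' in the relevance argument is circular. Choose $g$ with the fewest \emph{total} variables instead (or first strip irrelevant variables by restricting them, which preserves non-lro-ness); then every proper restriction has strictly fewer variables and minimality applies cleanly. Second, you rely on the fact that a positive threshold function admits a separating inequality with non-negative weights (hence strictly positive weights on relevant variables). This is standard and can be found in \cite{CH11}, but the paper only states the converse implication, so this fact should be cited explicitly rather than asserted.
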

\begin{proof}
% Since every read-once threshold function is lro, $f$ is either lro or non-read-once.
% Let $f$ be non-read-once function and show that it has $g$ as a restriction.
Stetsenko  proved in \cite{Stetsenko} that the set of all, up to renaming and negation of variables, minimal not read-once functions consists of 
the following functions:
$$
\begin{array}{ll}
g_n(x_1,\dots,x_n) = x_1(x_2 \orl \dots \orl x_n) \orl x_2 \dots x_n                            & (n\ge 3);\\
h^1_n(x_1,\dots,x_n) = x_1\dots x_n \orl \overline{x_1} \dots \overline{x_n}                    & (n\ge 2);\\
h^2_n(x_1,\dots,x_n) = x_1(x_2 \orl x_3 \dots x_n) \orl x_2 \overline{x_3} \dots \overline{x_n} & (n\ge 3);\\
h^3(x_1, \dots, x_5) = x_1(x_3x_4 \orl x_5) \orl x_2(x_3 \orl x_4x_5);\\
h^4(x_1, \dots, x_4) = x_1(x_2 \orl x_3) \orl x_3x_4.
\end{array}
$$
Let us show that all functions in this list, except for $g_n$, are $2$-summable, hence are not threshold.
\begin{itemize}
\item For the function $h^1_n$ we have:
$$
\begin{array}{l}
h^1_n(1,0,\ldots,0)=h^1_n(0,1,\ldots,1)=0,\\
h^1_n(0,0,\ldots,0)=h^1_n(1,1,\ldots,1)=1
\end{array}
$$
and
$$
(1,0,\ldots,0)+(0,1,\ldots,1)=(0,0,\ldots,0)+(1,1,\ldots,1).
$$
%therefore $h^1_n$ is $2$-summable and hence is not threshold.

\item For the function $h^2_n$ we have:
$$
\begin{array}{l}
h^2_n(1,0,0,\ldots,0)=h^2_n(0,1,1,\ldots,1)=0,\\
h^2_n(0,1,0,\ldots,0)=h^2_n(1,0,1,\ldots,1)=1
\end{array}
$$
and
$$
(1,0,\ldots,0)+(0,1,\ldots,1)=(0,1,0,\ldots,0)+(1,0,1,\ldots,1).
$$
%therefore $h^2_n$ is $2$-summable and hence is not threshold.

\item For the function $h^3$ we have:
$$
\begin{array}{l}
h^3(0,0,1,1,1)=h^3(1,1,0,0,0)=0,\\
h^3(0,1,1,0,0)=h^3(1,0,0,1,1)=1
\end{array}
$$
and
$$
(0,0,1,1,1)+(1,1,0,0,0)=(0,1,1,0,0)+(1,0,0,1,1).
$$
%therefore $h^3$ is $2$-summable and hence is not threshold.

\item For $h^4$ we have:
$$
\begin{array}{l}
h^4(1,0,0,1)=h^4(0,1,1,0)=0,\\
h^4(1,1,0,0)=h^4(0,0,1,1)=1
\end{array}
$$
and 
$$
(1,0,0,1)+(0,1,1,0)=(1,1,0,0)+(0,0,1,1).
$$
%therefore $h^4$ is $2$-summable and hence is not threshold.
\end{itemize}
Since the functions $h^1_n,h^2_n,h^3,h^4$ are not threshold, $f$ does not contain any of them or any function obtained from them by negation of variables as a restriction.
If, additionally, $f$ does not contain any function from $\cal G$, then  $f$ is read-once and hence is lro. If $f$ contains a function from $\cal G$ as a restriction,
then $f$ is not read-once and hence is not lro. 
\end{proof}

%%%%%%%%%%%%%%%%%%%%%%%%%%%%%%%%%%%
%\subsection{Examples of non-lro threshold functions with specification number $n+1$}\label{subsec-counerexample}
\subsection{Non-lro threshold functions with minimum specification number}
\label{subsec-counerexample}
%%%%%%%%%%%%%%%%%%%%%%%%%%%%%%%%%%%

%The following observation can be be easily verified.
%\begin{observation}\label{obs:lro-split}%\cite{Lozin}
	%Any positive lro function is split.
%\end{observation}
%
%\tableofcontents

\begin{theorem}\label{prop:1}%\cite{Lozin}
	For a natural number $n \geq 4$, let $f_n = f(x_1, \ldots, x_n)$ be a function defined by its DNF
	$$
		x_1 x_2 \vee x_1 x_3 \vee \dots \vee x_1x_{n-1} \vee x_2x_3\dots x_n.
	$$
	Then $f_n$ is positive, non-lro, threshold function, depending on all its variables,
	 and the specification number of $f_n$ is $n+1$.
\end{theorem}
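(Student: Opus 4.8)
The plan is to verify the four listed properties in turn, treating the specification-number claim as the only substantive one. Positivity is immediate, since the DNF contains no negated literal. To exhibit that $f_n$ is threshold I would simply display a representation: with weights $w_1=2n-5$, $w_2=\cdots=w_{n-1}=2$, $w_n=1$ and threshold $t=2n-4$, one checks that the inequality $(2n-5)x_1+2\sum_{i=2}^{n-1}x_i+x_n\le 2n-4$ assigns $0$ to every maximal zero and $1$ to every minimal one of $f_n$. Since all weights are positive and $f_n$ is positive, agreement on the extremal points forces the represented function to coincide with $f_n$ everywhere. Threshold-ness, together with Theorem~\ref{th:th-spec}, already gives $\sigma_{\mathcal{H}_n}(f_n)\ge n+1$, so only the matching upper bound remains to be proved.

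Next I would read off the extremal points from the prime DNF together with the prime CNF $(x_1\orl x_2)(x_1\orl x_3)\cdots(x_1\orl x_n)(x_2\orl x_3\orl\cdots\orl x_{n-1})$, whose equivalence to the DNF is verified by the absorption law exactly as for $g_n$. The minimal ones are the $n-2$ points $\vect{x}^{(j)}$ carrying $1$'s precisely in coordinates $1$ and $j$ (for $2\le j\le n-1$), together with $\vect{x}^{*}=(0,1,\dots,1)$; the maximal zeros are the $n-2$ points $\vect{y}^{(j)}$ carrying $0$'s precisely in coordinates $1$ and $j$ (for $2\le j\le n-1$), together with $\vect{y}^{(n)}$ (zeros in coordinates $1,n$) and $\vect{z}=(1,0,\dots,0,1)$. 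This gives $2n-1$ extremal points, with $x_2,\dots,x_{n-1}$ entering symmetrically. Since $f_n$ depends on all $n$ of its variables (each occurs in a prime implicant) and $2n-1\neq n+1$ for $n\ge 4$, Theorem~\ref{th:extremal_main} immediately yields that $f_n$ is non-lro.

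The heart of the argument is that the $n-2$ ``middle'' maximal zeros $\vect{y}^{(j)}$, $2\le j\le n-1$, are \emph{not} essential. The only way such a zero could be essential is to be flippable to a true point while preserving threshold-ness; but the identity $\vect{y}^{(j)}+\vect{x}^{(j)}=\vect{y}^{(n)}+\vect{z}=(1,\dots,1)$ exhibits, in the flipped function, two true points ($\vect{y}^{(j)}$ and $\vect{x}^{(j)}$) and two false points ($\vect{y}^{(n)}$ and $\vect{z}$) with equal sums, i.e.\ a $2$-summability. By Theorem~\ref{th:asum} the flipped function is not threshold, so $\vect{y}^{(j)}$ is not essential. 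The symmetry among $x_2,\dots,x_{n-1}$ is precisely what lets this single relation dispatch all $n-2$ of them at once.

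Finally I would assemble the bound. As $f_n$ is a positive threshold function depending on all its variables, its extremal points form a specifying set, so every essential point is extremal. Discarding the $n-2$ non-essential middle zeros leaves at most $n+1$ possible essential points; hence by Theorem~\ref{cl:ess-sigma} we get $\sigma_{\mathcal{H}_n}(f_n)=(\text{number of essential points})\le n+1$. Combined with $\sigma_{\mathcal{H}_n}(f_n)\ge n+1$ this gives $\sigma_{\mathcal{H}_n}(f_n)=n+1$, and it shows in passing that the remaining $n+1$ extremal points ($\vect{y}^{(n)}$, $\vect{z}$, the $\vect{x}^{(j)}$ and $\vect{x}^{*}$) are exactly the essential ones. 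I expect the main obstacle to be the bookkeeping: correctly enumerating the extremal points from the CNF/DNF and checking that the summability relation uses only genuine true/false points of $f_n$ distinct from $\vect{y}^{(j)}$ (which one confirms directly, including the base case $n=4$).
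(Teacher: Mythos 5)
Your proposal is correct and takes essentially the same route as the paper's proof: the same prime DNF/CNF enumeration of the $2n-1$ extremal points, the same threshold inequality $(2n-5)x_1+2(x_2+\cdots+x_{n-1})+x_n\le 2n-4$, and the identical $2$-summability identity $\vect{x}^{(j)}+\vect{y}^{(j)}=\vect{y}^{(n)}+\vect{z}=(1,\ldots,1)$ to show the $n-2$ middle maximal zeros are not essential. The only cosmetic differences are that the paper gets non-lro-ness from $f_n$ being non-canalyzing rather than from the extremal-point count of Theorem~\ref{th:extremal_main}, and it finishes by proving the remaining $n+1$ extremal points essential via Theorem~\ref{th:ess-all}, whereas you close the gap with the lower bound of Theorem~\ref{th:th-spec}; both versions are valid.
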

\begin{proof}
	Clearly, $f_n$ depends on all its variables,
	it is positive, not canalyzing, and therefore $f$ is non-lro.
	
	It is easy to verify that CNF of $f_n$ is
	$$
		(x_1 \vee x_2)(x_1 \vee x_3)\dots (x_1 \vee x_n)(x_2 \vee x_3 \vee \dots \vee x_{n-1}).
	$$
	%Indeed, the equivalence of the DNF and CNF can be directly checked by expanding the latter
	%and applying the absorption law:
	%\begin{equation*}
	%	\begin{split}
	%	       & (x_1 \vee x_2)(x_1 \vee x_3)\dots (x_1 \vee x_n)(x_2 \vee x_3 \vee \dots \vee x_{n-1}) \\
	%		& = (x_1 \vee x_2x_3\dots x_n)(x_2 \vee x_3 \vee \dots \vee x_{n-1}) \\
	%		& = x_1 x_2 \vee x_1 x_3 \vee \dots \vee x_1x_{n-1} \vee x_2x_3\dots x_n.
	%	\end{split}
	%\end{equation*}
	From the DNF and the CNF of $f_n$ we retrieve the minimal ones
	$$
	\begin{array}{r}
		\vect{x}_1 = (1,1,0,\dots,0,0),\\
		\vect{x}_2 = (1,0,1,\dots,0,0),\\
		\dotfill                         \\
		\vect{x}_{n-2} = (1,0,0,\dots,1,0),\\
		\vect{x}_{n-1} = (0,1,1,\dots,1,1)\phantom{,}\\
	\end{array}	
	$$
	and maximal zeros of $f_n$
	$$
	\begin{array}{r}
    		\vect{y}_1 = (0,0,1,\dots,1,1),\\
    		\vect{y}_2 = (0,1,0,\dots,1,1),\\
		\dotfill                             \\
		\vect{y}_{n-2} = (0,1,1,\dots,0,1),\\
		\vect{z}_1 = (0,1,1,\dots,1,0),\\
		\vect{z}_2 = (1,0,0,\dots,0,1).\\
	\end{array}
	$$
	It is easy to check that all minimal ones $\vect{x}_1,\vect{x}_2,\dots,\vect{x}_{n-1}$ 
	belong to the hyperplane 
	$$
		(2n-5)x_1 + 2(x_2+x_3+\dots+x_{n-1})+x_n = 2n-3,
	$$
	the maximal zeros $\vect{y}_1, \vect{y}_2, \dots, \vect{y}_{n-2}$
	belong to the hyperplane 
	$$
		(2n-5)x_1 + 2(x_2+x_3+\dots+x_{n-1})+x_n = 2n-5,
	$$
	and the maximal zeros $\vect{z}_1, \vect{z}_2$ belong to the hyperplane 
  $$
		(2n-5)x_1 + 2(x_2+x_3+\dots+x_{n-1})+x_n = 2n-4.
	$$
  Hence, $(2n-5)x_1 + 2(x_2+x_3+\dots+x_{n-1})+x_n \leq 2n-4$ is a threshold inequality representing $f_n$.
	
	%Since for a positive threshold function $f$ which depends on all its variables the set of
 	%extremal points specifies $f$, and every essential point of $f$ must belong to each specifying set,
 	%we conclude that every essential point of $f_n$ is extremal.
	%All essential points of any positive threshold function which depends on all its variables are extremal.
	
	As in the proof of Lemma~\ref{lem:g} we conclude that every essential point of $f_n$ is extremal.
	However, the extremal points $\vect{y}_1, \vect{y}_2, \dots, \vect{y}_{n-2}$ are not essential. 
	To show this, suppose to the contrary that there exists a threshold function $d_i$ that differs from $f_n$ only 
	in the point $\vect{y}_i$, $i \in [n-2]$, i.e., $d_i(\vect{y}_i)=1$ and $d_i(\vect{x})=f_n(\vect{x})$ 
	for every $\vect{x}\ne \vect{y}_i$.
	Then $\vect{x}_i + \vect{y}_i = \vect{z}_1 + \vect{z}_2$, and hence $d_i$ is 2-summable. 
	Therefore, by Theorem~\ref{th:asum}, $d_i$ is not a threshold function, a contradiction.

	Now Theorems \ref{cl:ess-sigma} and \ref{th:ess-all} imply that all
	the remaining $n+1$ extremal points $\vect{x}_1,\dots,\vect{x}_{n-1}$, $\vect{z}_1,\vect{z}_2$
	are essential, and therefore $\sigma_{\mathcal{H}_n}(f_n) = n+1$.
\end{proof}

\section{Conclusion}
In this paper we proved a number of results related to the class of linear read-once functions. 
We also  showed the existence of positive threshold Boolean functions of $n$ variables,
which are not linear read-once and for which the specification number is at its lowest
bound, $n + 1$. This leaves open the problem of characterizing the set of all such functions.
We observe that this set is not closed under taking restrictions. In particular, the functions 
described in Theorem~\ref{prop:1} contain, as restrictions, the functions from the set $\cal G$.
This example also shows that specification number is not monotone with respect to restrictions,
i.e. by restricting a function we can increase the specification number. All these remarks 
suggest that the problem of characterizing the set of all threshold functions with minimum 
value of specification number is highly non-trivial.

\paragraph*{Acknowledgment} 
The authors are grateful to Dmitry Chistikov for the useful discussions on the topic and for drawing their attention to the work 
of V.~Stetsenko \cite{Stetsenko}.

The work of Vadim Lozin and Nikolai Yu. Zolotykh was supported by the Russian Science Foundation Grant No. 17-11-01336.

\end{document}